\newcommand{\brown}[1]{{\color{brown}#1}}
\newcommand{\blue}[1]{{\color{blue}#1}}
\newcommand{\gray}[1]{{\color{gray}#1}}
\newtheorem{theorem}{Theorem}[section]
\newtheorem{corollary}[theorem]{Corollary}
\newtheorem{lemma}[theorem]{Lemma}
\newtheorem{problem}[theorem]{Problem}
\newtheorem{conjecture}[theorem]{Conjecture}
\newtheorem{proposition}[theorem]{Proposition}
\theoremstyle{definition}
\newtheorem{definition}[theorem]{Definition}
\newtheorem{remark}[theorem]{Remark}
\newtheorem{example}[theorem]{Example}
\newcommand{\ds}{\displaystyle}
\newcommand{\Z}{\ensuremath{\mathbb{Z}}}
\newcommand{\rank}{\ensuremath{\operatorname{rank}}}
\newcommand{\im}{\ensuremath{\operatorname{im}}}
\newcommand{\arr}{\ensuremath{\operatorname{arr}}}
\newcommand{\Addresses}{{% additional braces for segregating \footnotesize
  \bigskip
  \footnotesize
  \textsc{
Department of Mathematics \& Statistics, McMaster University\\
%1280 Main Street West,
Hamilton, Ontario,
Canada L8S 4K1
  }\par\nopagebreak
      \textit{E-mail address}: \texttt{chenj293@mcmaster.ca}
  }}
\title[FlatKnotInfo: the first 1.24 million flat knots]{FlatKnotInfo:\\ the first 1.24 million flat knots}
\author{Jie Chen}
\date{}
\begin{document}
%-=-=-=-=-=-=-=-=-=-=-=-=-=-=-=-=-=-=-=-=-=-=-=-=-=-=-=-=-=-=-=-=-=-=-
\subjclass[2020]{57K10; 57K12}
\keywords{flat knot, knot table, flat knot invariant}
%-=-=-=-=-=-=-=-=-=-=-=-=-=-=-=-=-=-=-=-=-=-=-=-=-=-=-=-=-=-=-=-=-=-=-
\begin{abstract}
    We use matchings on Lyndon words to classify flat knots up to 8 crossings. Using flat knots invariants such as the based matrix, the $\phi$-invariant, the flat arrow polynomial, and the flat Jones-Krushkal polynomial, we distinguish all flat knots up to 7 crossings except for five pairs. 
    Among the many flat knots considered, we find examples that are: (i) algebraically slice but not slice;
    (ii) almost classical (null-homologous) but not slice; 
    (iii) nontrivial but with trivial (primitive) based matrix.
    
    The classification data has been curated and is available on FlatKnotInfo, which is an interactive searchable website listing flat knots up to 8 crossings and their invariants. It also provides access to algebraic and diagrammatic information on these knots and is designed to enable users to discover patterns and formulate conjectures on their own. 

    \end{abstract}
\maketitle
%\tableofcontents

%-=-=-=-=-=-=-=-=-=-=-=-=-=-=-=-=-=-=-=-=-=-=-=-=-=-=-=-=-=-=-=-=-=-=-
\section{Introduction}

Knots have been part of human culture since ancient times. For instance, interlaced patterns of knots and links appear as early as  the $3^{\rm rd}$ and $4^{\rm th}$ centuries, and they are featured prominently in early Celtic art and in the famous Book of Kells. Despite this, the study of knots as mathematical objects did not begin until around the $18^{\rm th}$ century. 

One of the central problems in knot theory is the classification problem, whose solution can be divided into two steps. The first step is to generate a \textit{complete} list of representatives, and the second is to remove duplicates from the list until it includes exactly one representative for each knot type. The first is a problem of generation, and its solution requires constructive methods. The second is a problem of separation, and its solution requires obstructive methods. Ideally, an algorithm is developed to generate representatives and knot invariants are applied to distinguish them as distinct knot types.

For classical knots, the classification problem can be traced back to the $19^{\rm th}$ century, when Tait and Little constructed the first table of alternating classical knots up to 11 crossings. Since then, the knot tables were extended to knots with 11 crossings by Conway \cite{Conway}, then to 12 crossings by Rolfsen \cite{Rolfsen}, then to 17 crossings by Hoste and Thistlethwaite \cite{HTW98}, and most recently to 19 crossings by Burton \cite{Burton}.
Along the way, mathematicians have developed more efficient algorithms to generate representatives and devised many new invariants of knots and links.   
 
The large data sets of knots requires a compact notation for encoding diagrams, and two common methods are Dowker codes and Gauss codes, see \cite{knotinfo}. Both record the crossing information, and although every knot diagram determines a unique Gauss code, not all Gauss codes correspond to actual knot diagrams, in fact many are non-planar. To address this shortcoming, Kauffman introduced a new type of crossing called a \emph{virtual crossing}, and this led to the development of \emph{virtual knot theory} \cite{kauffman99}.
As a generalization of classical knots, virtual knots represent smooth embeddings of loops in a thickened closed oriented surface up to isotopy and stabilization. 
Many results and techniques for classical knots have been adapted and extended to virtual knots, including 
the Jones polynomial and Khovanov homology (cf.~\cites{kauffman99, GPV00,manturovbook}).  

However, virtual knot theory exhibits new and unexpected behavior. 
For example, Kronheimer and Mrowka \cite{KM11} proved that Khovanov homology detects the unknot, but this is no longer true among virtual knots. Indeed,  there are examples of nontrivial virtual knots with trivial Khovanov homology.
In addition, the operation of connected sum of virtual knots exhibits strange new behavior. It is not well-defined as an operation on virtual knots, rather it depends on the diagrams used as well as the connection points on the diagram. The Kishino knot is a striking example; it is a nontrivial virtual knot obtained as the connected sum of two trivial virtual knot diagrams!
 
For classical knots, it is well-known that any knot diagram can be transformed into a trivial knot with a sequence of crossing changes.
This property fails for virtual knots, and the Kishino knot 
is one that cannot be unknotted by crossing changes.
In short, crossing change is not an unknotting operation for virtual knots.

\begin{figure}[ht]
    \centering
    \includegraphics[scale=0.9]{./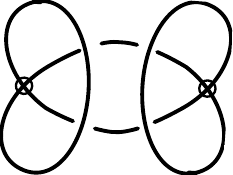}
    \hspace{32pt}
    \includegraphics[scale=0.9]{./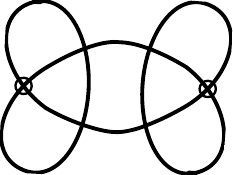}
    \caption[Kishino knot]{The virtual and  flat Kishino knot}
    \label{fig:kishino}
\end{figure}

Flat knots arise as the quotient of virtual knots modulo crossing change, 
 thus they represent the obstruction to a virtual knot being unknottable by crossing changes. 
Flat knots can be represented diagrammatically as flattened virtual knot diagrams, namely by replacing every classical crossing with a self-intersection. There is a well-defined surjection, called the \emph{shadow projection}, from virtual knots to flat knots, and all classical knots are mapped to the trivial flat knot under this map.

Many of the tools, methods, and invariants developed to study virtual knots can be adapted and applied to flat knots.
Flat knots represent immersed loops in closed oriented surfaces up to homotopy and stabilization. 
Although the Jones polynomial is trivial on flat knots, many of the other skein-based  polynomial invariants of virtual knots map are nontrivial on flat knots. For instance, the arrow polynomial and the Jones-Krushkal polynomial give strong invariants of flat knots and are useful in distinguishing them.

In \cite{turaev04}, Turaev developed an algorithm for classifying flat knots, and this approach was implemented by Gibson \cite{gibson}. He represented flat knots as nanowords, and used the $u$-polynomial, $\phi$-invariants, and the 2-parity projection to distinguish the flat knot types. This approach works for flat knots with up to four crossings, but the invariants are not sufficiently powerful to distinguish flat knots with five or more crossings.

In this paper, we give a tabulation of the first 1,289,741 flat knots. This is achieved by representing flat knots by Lyndon words and using a much larger set of invariants to distinguish them. Our tabulation includes flat knots with up to 8 crossings, and the results are available through an interactive website FlatKnotInfo \cite{flatknotinfo}, which lists the flat knots along with their invariants. 

These methods completely distinguish flat knots up to 6 crossings, and they work for flat knots with 7 crossings, leaving only 5 pairs of ambiguities.
We calculate invariants such as Reidemeister-3 orbits, mirror images and symmetry type, minimal genus, checkerboard colorability, and almost classicality.
We also calculate the based matrix and primitive based matrix, as well as invariants derived from the based matrix, such as the $\phi$-invariants and inner and outer characteristic polynomials.
We also calculate a number of different polynomial invariants, such as the $u$-polynomial, the flat Jones-Krushkal polynomial, the flat arrow polynomial and the 2-strand cabled flat arrow polynomial. 

In tabulating a large number of flat knots, we observed many interesting patterns and discovered new phenomena. Based on the empirical data, we initially formulated the following three hypotheses:
\begin{enumerate}
\item[(A)] For flat knots, algebraically slice implies slice.
\item[(B)] For flat knots, the primitive based matrix detects the unknot.
\item[(C)] Every almost classical flat knot is algebraically slice.
\end{enumerate}
While these hypotheses had been supported by low-crossing number flat knots, we found counterexamples to each one. For instance, Example~\ref{eg:6464} presents a flat knot that is algebraically slice but not slice, disproving (A).
Figure~\ref{fig:fkac828} shows a nontrivial flat knot with trivial primitive based matrix, disproving (B).
The flat knot in Figure~\ref{fig:fk7nonslice} is another counterexample; it is  nontrivial and not even slice but it has trivial primitive based matrix.
Statement (C) was the most difficult to disprove, in fact it is true for flat knots up to 10 crossings. However, there are counterexamples among the 11-crossing flat knots. In particular, the flat knots in Figure~\ref{fig:ac11} are almost classical but not algebraically slice, and they give counterexamples to (C).

Other computational results are available online on FlatKnotInfo \cite{flatknotinfo}, where users can access pre-calculated invariants and search for flat knots. FlatKnotInfo was inspired by the famous knot theory website KnotInfo \cite{knotinfo}, and it also owes an intellectual debt to the incredibly useful online table of virtual knots \cite{Green}. Each flat knot has a separate page featuring its minimal Gauss diagrams and invariants, along with cross-references to other flat knots with same invariant values and to virtual knots mapping to it under shadow projection. FlatKnotInfo also provides information on sliceness and algebraic sliceness for all flat knots up to 7 crossings with nine exceptions.

FlatKnotInfo  \cite{flatknotinfo} was an indispensible tool in carrying out the research presented here, and it served to guide and inform our investigations. Indeed one goal of this paper is to showcase the discoveries made in developing FlatKnotInfo, which grew out of an effort to classify the largest possible dataset of flat knots (cf. \cite{chen-thesis}). FlatKnotInfo  \cite{flatknotinfo} provides the computational backbone for many of the invariant calculations that appear below, including based matrices, arrow polynomials, Jones-Krushkal polynomials. For readers interested in more detail on the computational aspects, including algorithms and Python code, please refer to \cite{chen-thesis}.

The remainder of this paper is organized as follows.
In Section~2, we review preliminary materials.
In Section~3, we briefly review the theoretical basis for tabulating flat knots combinatorically, followed by the algorithm used in our tabulation.
In Section~4, we introduce the based matrix and several invariants derived from it. These invariants are used in Section~5, where we show the difference between sliceness, algebraic sliceness and almost-classicality of flat knots.
 In Sections~6 and 7, we introduce the flat arrow polynomial and the flat Jones-Krushkal polynomial, respectively.
In Section~8, we make a few concluding remarks.

%-=-=-=-=-=-=-=-=-=-=-=-=-=-=-=-=-=-=-=-=-=-=-=-=-=-=-=-=-=-=-=-=-=-=-
\section{Basic notions}
In this section, we introduce flat knots which are described interchangeably in terms of flat knot diagrams, flat Gauss diagrams and immersions in oriented closed surfaces.

\begin{definition}
A \emph{flat link diagram} is a $4$-valent planar graph, 
where each vertex is now allowed to be either a \emph{flat crossing} or virtual crossing
as in Figure~\ref{fig:fcr}.
\end{definition}

\begin{figure}[ht]
    \centering
    \includegraphics[scale=1.2]{./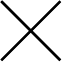}
    \hspace{26pt}
    \includegraphics[scale=1.2]{./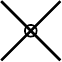}
    \caption[Flat and virtual crossings]{A flat crossing (left) and virtual crossing (right) }
    \label{fig:fcr}
\end{figure}

\begin{definition}
    Two flat link diagrams are said to be \emph{homotopic} if they are related by a finite sequence of 
    \emph{flat Reidemeister moves} shown in Figure~\ref{fig:frmove}
    along with ambient isotopies of the surface.
    \label{def:fr}
\end{definition}
\begin{figure}[ht]
    \centering
    \includegraphics[scale=0.9]{./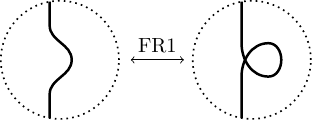}
    \hspace{16pt}
    \includegraphics[scale=0.9]{./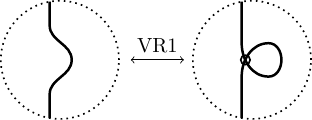}\\
    \vspace{16pt}
    \includegraphics[scale=0.9]{./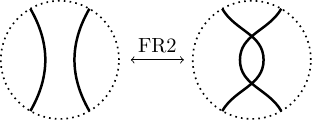}
    \hspace{16pt}
    \includegraphics[scale=0.9]{./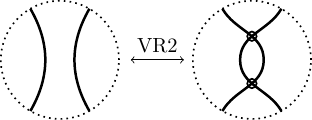}\\
    \vspace{16pt}
    \includegraphics[scale=0.9]{./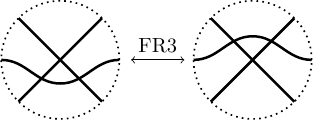}
    \hspace{16pt}
    \includegraphics[scale=0.9]{./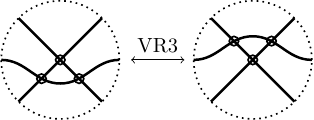} \\
    \vspace{16pt}
    \includegraphics[scale=0.9]{./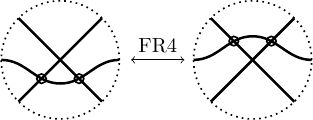}
    \caption[Flat Reidemeister moves]{Flat Reidemeister moves}
    \label{fig:frmove}
\end{figure}

The \emph{crossing number} of a flat link diagram $D$ is denoted $cr(D)$ and defined as the number of flat crossings in $D$. (Note that we ignore virtual crossings here.)
For a flat link $\alpha$, $cr(\alpha)$ is defined as the minimum $cr(D)$, over all diagrams $D$ for $\alpha$. Thus, the crossing number is a flat link invariant.

As shown in Figure~\ref{fig:kishino}, given a virtual knot diagram, there is an associated flat knot diagram given by flattening
all the crossings. This induces a well-defined projection from virtual knots to flat knots.
When we refer to a component of a flat link, we mean a connected component of a virtual link overlying it. In this paper, we will be mainly interested in flat knots, which are flat links with one component.

\begin{definition}
For an oriented flat knot diagram with $n$ classical crossings, its \emph{flat Gauss diagram}
is a counterclockwise oriented skeleton with $2n$ points on the skeleton and $n$ arrows.
Every arrow encodes a crossing as shown in Figure~\ref{fig:fgauss2}.
The order of the points on the skeleton tells us the adjacency of the crossings in the flat knot diagram.
\end{definition}

\begin{definition}
    A \emph{flat Gauss code} (or \emph{flat Gauss word}) is a notation for representing flat Gauss diagrams. 
    From 12 o'clock, travel counterclockwise around the skeleton and assign successive numbers $1,2,\ldots$ to each arrow encountered.
    At each arrowhead (or tail), record ``{\tt U}'' (or ``{\tt O}''), along with the arrow number.
    The Gauss code is a recording of every numbered arrowhead and tail, starting again at 12 o'clock and going counterclockwise around the skeleton.
\end{definition}
\begin{figure}[ht]
    \centering
    \includegraphics[scale=0.9,trim=0 0 0 0]{./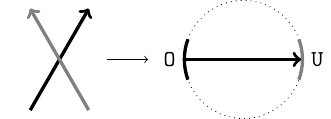}
   \caption{Flat crossings to Gauss diagram arrows}
    \label{fig:fgauss2}
\end{figure}

\begin{figure}[ht]
    \centering
    \includegraphics[angle=0,scale=1]{./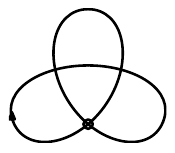}
    \;
    \includegraphics[angle=0,scale=0.9,trim=0 0 0 0]{./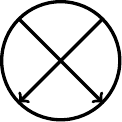}

    \caption{Flat virtual trefoil and its Gauss diagram.}
    The corresponding Gauss code is {\tt O1U2U1O2}.
    \label{fig:gauss5}
\end{figure}
Flat knot diagrams are completely determined by the associated Gauss diagram, 
up to VR1, VR2, VR3 and FR4.
On Gauss diagrams, the corresponding Reidemeister moves are as shown in Figure~\ref{fig:gfrmove}.
Note that VR1, VR2, VR3, FR4 do not change the flat Gauss diagram.

\begin{figure}[ht!]
    \centering
    \includegraphics[scale=0.9]{./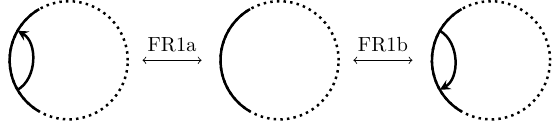}\\
    \vspace{8pt}
    \includegraphics[scale=0.9]{./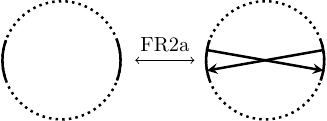} \\
    \hspace{8pt}
    \includegraphics[scale=0.9]{./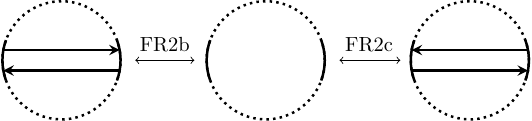} \\
    \vspace{11pt}
    \includegraphics[scale=0.9]{./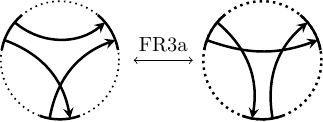}
    \hspace{11pt}
    \includegraphics[scale=0.9]{./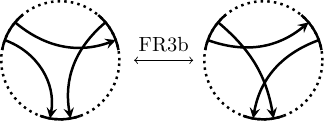}\\
    \vspace{11pt}
    \includegraphics[scale=0.9]{./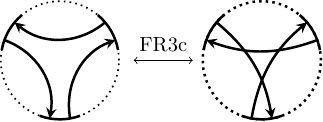}
    \hspace{11pt}
    \includegraphics[scale=0.9]{./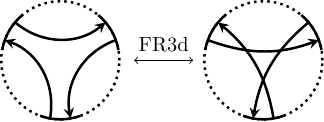}
   \caption{Flat Reidemeister moves on Gauss diagrams}
    \label{fig:gfrmove}
\end{figure}

\begin{definition}
    An \emph{immersion representation} (or a \emph{diagram on surface $\Sigma_g$}) of a flat knot $\alpha$ is an immersion $\omega_{\alpha}:S^1\looparrowright \Sigma_g$, where
    $\Sigma_g$ is a connected, oriented, closed surface of genus $g$.
    Two immersion representations are \emph{stably equivalent} if they are related by a finite sequence of stabilizations, destabilizations, and homotopies.
    The \emph{flat genus} $g(\alpha)$  of a flat knot $\alpha$ is defined to be the minimal genus over all surfaces admitting an immersion representation for $\alpha$.
    \label{def:fkim}
\end{definition}

We can get a planar projection of $\Sigma_g$ so that every double point is transverse and realize all the double points that are not in the immersion as virtual crossings. Thus we get a flat knot diagram. 
From a flat knot diagram, 
if we regard the virtual crossing as in Figure~\ref{fig:thickened2} 
(left),
every flat crossing is realized by thickening the surface
as in Figure~\ref{fig:thickened2} (right), 
and then attaching $2$-disks to that surface along its boundaries
to obtain a Carter surface.
By \cite{carterkamadasaito},  there is a bijection between the equivalence classes of flat knot diagrams and those of immersion representations.
Throughout this paper, we will interchangeably use the three representations.

\begin{figure}[ht]
    \centering
    \hspace{11pt}
    \includegraphics[scale=1.2]{./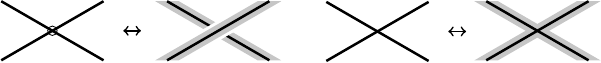}
    \caption{Flat knots as immersed  loops on surfaces}
    \label{fig:thickened2}
\end{figure}

The symmetries of a flat knot are generated by the two involutions.
Let $\alpha$ be a flat knot represented as an immersion $\omega_\alpha:S^1 \looparrowright \Sigma_g.$
\begin{enumerate}
    \item The \emph{reverse} $-\alpha$ is given by changing the orientation of $S^1$;
    \item The \emph{mirror image}  $\alpha^*$ is given by changing the orientation of $\Sigma_g$;
    \item The \emph{reversed mirror image}  $-\alpha^*$. 
\end{enumerate}

On flat Gauss diagrams, the operation $\alpha \to -\alpha$ reverses the orientation of the skeleton, whereas the operation $\alpha \to \alpha^*$
reverses all the arrows.
\begin{figure}[ht]
   \centering
   \includegraphics[scale=1.1]{./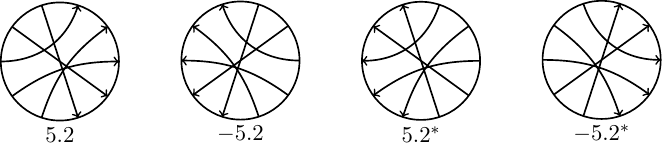}
   \caption{Gauss diagrams of `siblings' of the flat knot 5.2}
   \label{fig:fk52}
\end{figure}
\begin{figure}[ht!]
   \centering
   \includegraphics[scale=0.35]{./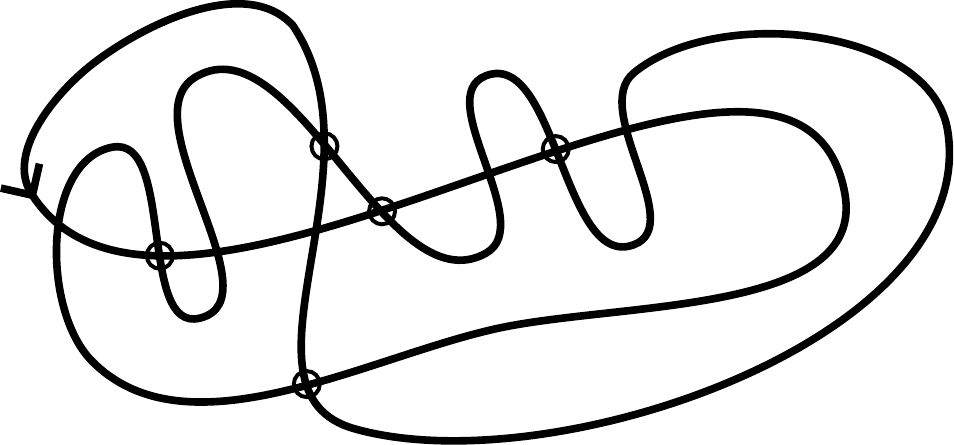}\;\;
   \includegraphics[scale=0.35]{./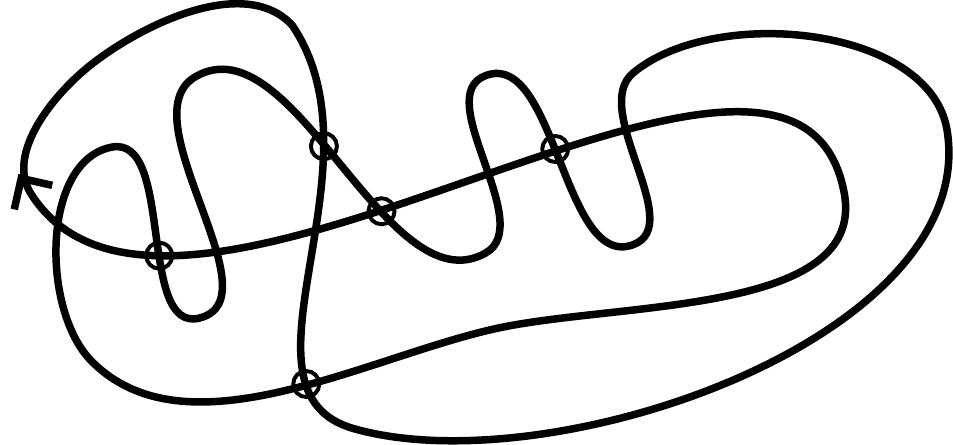}\\
   5.2 \hspace{0.4\textwidth} $-5.2$\\
   \includegraphics[scale=0.35]{./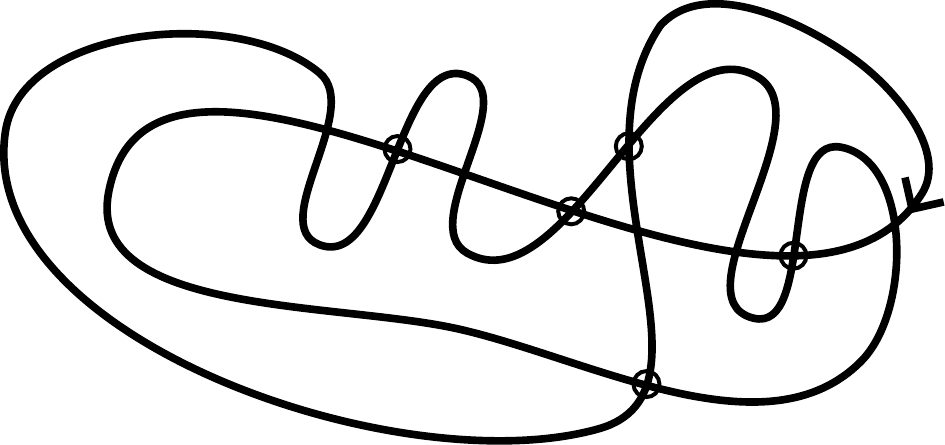}\;\;
   \includegraphics[scale=0.35]{./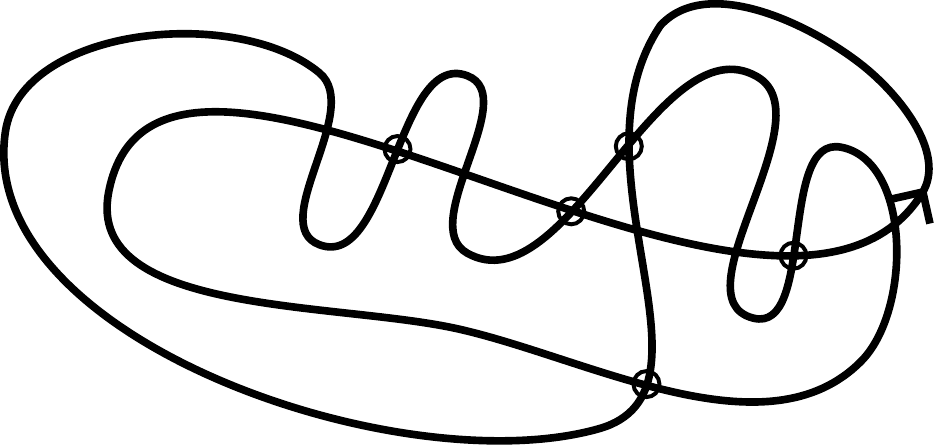}\\
   $5.2^*$ \hspace{0.4\textwidth} $-5.2^*$
   \caption{Diagrams of `siblings' of the flat knot 5.2}
   \label{fig:fk522}
\end{figure}

\begin{definition}
    The five symmetry types of flat knots are
    chiral, reversible,  $+$-achiral, $-$-achiral and fully-achiral as defined in Table~\ref{table:symmetry}.

    \begin{table}[ht!]
    \centering
\begin{tabular}{|c |c |c |c |c |c|} 
 \hline
  & chiral & reversible &  $+$-achiral & $-$-achiral & fully-achiral \\ [0.5ex] 
 \hline
$\alpha=-\alpha$    & No & Yes & No  & No  & Yes \\\hline
$\alpha=\alpha^*$   & No & No  & Yes & No  & Yes \\\hline
$\alpha=-\alpha^*$  & No & No  & No  & Yes & Yes \\
 \hline
\end{tabular}
\vspace{2mm}
\caption{Symmetry types of flat knots}
\label{table:symmetry}
\end{table} 
\end{definition}

On FlatKnotInfo \cite{flatknotinfo}, we consider the `siblings' as of the same knot type,
and we use the letters `c', `r', `$+$', `$-$' and `a' to indicate the symmetry type, respectively.

We now introduce the index of a flat crossing and use it to define when a flat knot or flat knot diagram is mod $p$ Alexander numberable, almost classical, or checkerboard colorable. As a historical note, these notions were first introduced for virtual knots by Silver and Williams, who coined the term \emph{almost classical} in \cite{MR2275098}. It is straighforward to see that these notions are well-defined for flat knots, and further that a virtual knot diagram is mod $p$ Alexander numberable (or almost classical or checkerboard colorable) if and only if its corresponding flat knot diagram is.

\begin{figure}[ht]
    \centering
    \includegraphics[scale=1.3]{./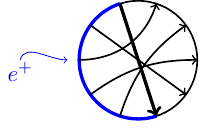}
    \hspace{11pt}
    \includegraphics[scale=0.4]{./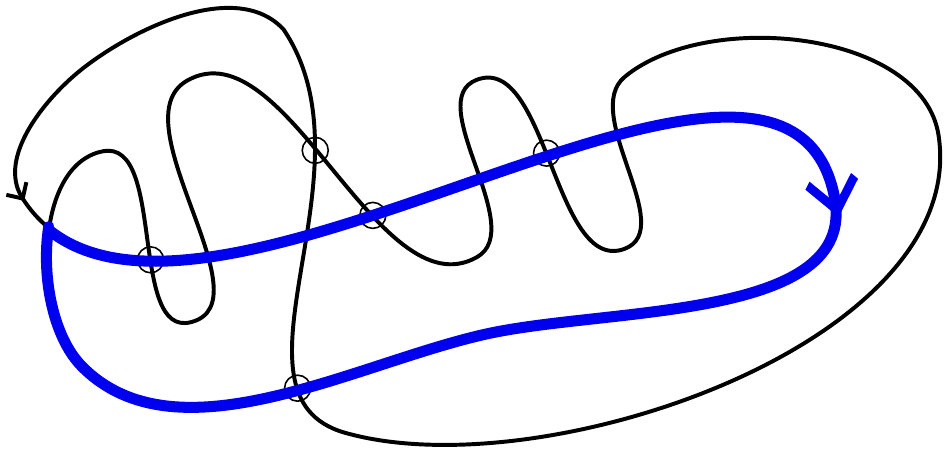}
   \caption{Loops $e^+$ associated to crossing $e$}
    \label{fig:indexdef2}
\end{figure}

Let $D$ be a Gauss diagram and $\arr(D)$ be its set of arrows.
For an arrow $e\in\arr(D)$, let $e^+$ denote open interval on the flat Gauss diagram from the arrow tail to the arrow head
as shown in Figure~\ref{fig:indexdef2}.

\begin{definition}
    In a Gauss diagram,
    the \emph{index} $n(e)$ of an arrow $e$ is given by 
    the number of arrow tails in $e^+$ minus the number of arrow
    heads in $e^+$.
    \label{def:index}
\end{definition}

For example, the indices of arrows $a,b,c,d,e$ in Figure~\ref{fig:indexdef0} are $4,2,0,-3,-3,$, respectively. 

\begin{figure}[ht]
    \centering
    \includegraphics[scale=2]{./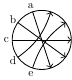}
   \caption{Gauss diagram for 5.1 with arrow labelling}
    \label{fig:indexdef0}
\end{figure}

Given a flat knot diagram, using the one-to-one correspondence between the set of flat crossings and arrows in its Gauss diagram, Definition \ref{def:index} applies equally well to define the index at the flat crossings. 

\begin{definition}
    A flat knot diagram is said to be \emph{mod $p$ Alexander numberable} if all its crossings have index $n(e) \equiv 0 \mod p$.  
    When $p=0$, the diagram is said to be \emph{almost classical} and when $p=2$, it is said to be \emph{checkerboard colorable}. A flat knot is said to be \emph{mod $p$ Alexander numberable} (or \emph{almost classical} or \emph{checkerboard colorable}) if it admits a flat knot diagram  having the property.
    \label{def:acflat}
\end{definition}

When a flat knot is represented as an immersed curve in a surface, the arrow indices can be viewed as intersection numbers. Given an immersion representation $\omega_\alpha : S^1\looparrowright \Sigma_g$ of $\alpha$, the image of $e^+$ is an oriented loop in $\Sigma_g$. 
Referring to Figure~\ref{fig:indexdef2}, let $[e^+]$ denote the homology class of the image of $e^+$ in $H_1(\Sigma_g,\Z).$ 
The index $n(e)$ can be viewed as the intersection number of the $1$-cycles  in $\Sigma_g$ associated to the crossing and the core element $s=[\omega_\alpha(S^1)]$.
Referring to \cite{turaev04}*{Section~4.2}, we note that the set $$\{ s\} \cup \{ [e^+]  \mid e \in \arr(\alpha) \}$$ gives a set of generators for $H_1(\Sigma_g;\Z)$.
Therefore, if $n(e)=0$ for all $e\in \arr(\alpha)$, then $\omega_\alpha(S^1)$ is homologically trivial in $\Sigma_g$. 
Likewise, if $n(e) \equiv 0 \mod 2$ for all $e\in \arr(\alpha)$, then $\omega_\alpha(S^1)$ is $\Z/2$ homologically trivial. Thus,  a flat knot is almost classical if and only if it admits a homologically trivial representative $\omega_\alpha:S^1 \to \Sigma_g$, and it is checkerboard colorable if and only if it admits a $\Z/2$ homologically trivial representative.  

Given a flat knot $\alpha$ that is mod $p$ Alexander numberable, not every diagram for $\alpha$ will necessarily be mod $p$ Alexander numberable. However, just as for virtual knots,  any \emph{minimal crossing} diagram for $\alpha$ will be mod $p$ Alexander numberable. A proof of this fact for virtual knots can be found in \cite{boden_acknot}, and the proof carries over to flat knots without issue. This feature is especially useful in light of the fact that the tabulation of flat knots in \cite{flatknotinfo} is given in terms of minimal crossing representatives. Consequently, one can completely determine whether any flat knot is mod $p$ Alexander numberable, almost classical, and/or checkerboard colorable by simply computing the arrow indices of the representative diagram.

%-=-=-=-=-=-=-=-=-=-=-=-=-=-=-=-=-=-=-=-=-=-=-=-=-=-=-=-=-=-=-=-=-=-=-
\section{Monotonicity and classification} \label{sec-mono}
In this section we review the theoretical basis underlying the classification of flat knots and we introduce an algorithm for implementing it.

In \cite{turaev04}, Turaev developed an algorithm for classifying flat knots.
It has been implemented by Gibson \cite{gibson} and is the basis for FlatKnotInfo \cite{flatknotinfo}.
This algorithm leverages the monotonicity property for flat knots, as stated in Theorem~\ref{thm:monotonicity} below.
(Note that the corresponding statement is not true for classical knots. In general, one may need to increase the number of crossings in any sequence of Reidemeister moves relating two minimal crossing diagrams of the same classical knot. It is an open problem to find a good upper bound on this number.)
\begin{theorem}[Monotonicity \cites{hassscott,manturovbook, cahn,david}]
    For any flat knot diagram, there exists a sequence of flat Reidemeister moves such that the number of crossings monotonically decreases until one achieves a minimal crossing diagram.
    Further, any two minimal crossing diagrams of the same flat knot are related by a sequence of FR3 moves.
    \label{thm:monotonicity}
\end{theorem}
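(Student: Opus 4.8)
The plan is to reduce the statement to classical results about curves on surfaces, exploiting the bijection between flat knot diagrams and immersion representations (via Carter surfaces) established in the discussion following Definition~\ref{def:fkim}. A flat knot diagram $D$ corresponds to an immersed loop $\omega : S^1 \looparrowright \Sigma$ on some closed oriented surface $\Sigma$, and flat Reidemeister moves on $D$ (together with stabilization/destabilization) correspond exactly to homotopies of $\omega$ combined with changes of the surface. The key input is the Hass–Scott curve-shortening / combinatorial monotonicity result \cite{hasscott}: for a generic immersed curve on a surface, if the curve is not already in minimal position (minimal number of self-intersections within its homotopy class on that surface), then there is a homotopy that strictly decreases the number of double points, realized combinatorially by a sequence of Reidemeister-type moves (an R1 or R2 that removes crossings, possibly preceded by some R3 moves that do not increase the count). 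Iterating, one reaches a diagram on $\Sigma$ with the minimal number of self-intersections for that surface.

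The next step is to pass from "minimal on the given surface $\Sigma$" to "minimal among all diagrams of the flat knot", i.e.\ to account for genus reduction. Here one invokes the destabilization results in the cited literature (\cite{manturovbook}, \cite{cahn}, \cite{david}): if the Carter surface carrying $D$ has genus larger than the flat genus $g(\alpha)$, then after putting the curve in minimal position on $\Sigma$ one can find a compressing disk / a trivial handle, and destabilizing along it does not increase the crossing number — indeed a destabilization removes a virtual crossing and leaves the flat crossings untouched, or removes an entire monogon/bigon region allowing a further crossing-decreasing move. Alternating curve-shortening on the current surface with destabilizations, and noting that genus is a nonnegative integer that strictly drops at each destabilization while crossing number never increases, the process terminates at a diagram which is simultaneously minimal-genus and minimal-crossing; since crossing number only ever decreased along the way, the whole sequence from $D$ is monotone.

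For the second assertion — that two minimal crossing diagrams of the same flat knot are related by FR3 moves alone — the argument is: both minimal diagrams correspond to curves in minimal position on minimal-genus surfaces, and by the uniqueness part of the Hass–Scott theory, any two such curves in minimal position representing the same free homotopy class on surfaces of the same (minimal) genus are related by an ambient isotopy of the surface together with R3-type moves only (no R1 or R2, since those would change the crossing count and minimality forbids detours). One must also check that the minimal-genus surface is unique up to homeomorphism for a given flat knot, which follows from the additivity/uniqueness statements in \cite{cahn, david} for the Carter surface of a minimal diagram. Translating "ambient isotopy of $\Sigma$ plus R3" back through the Carter-surface bijection gives exactly "sequence of FR3 moves" on flat Gauss diagrams (recalling that VR1, VR2, VR3, FR4 are invisible on the Gauss diagram level, as noted after Figure~\ref{fig:gfrmove}).

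The main obstacle is the second step: making precise and citing correctly the interplay between curve-shortening \emph{on a fixed surface} and \emph{destabilization}, and verifying that one can always interleave them so that the crossing count is monotone throughout — a naive destabilization could in principle be blocked until certain crossings are present. The cleanest route is to cite the literature where this bookkeeping has already been done (Manturov's minimality theorem for flat/free knots in \cite{manturovbook}, refined by Cahn \cite{cahn} and by the work in \cite{david}) rather than to reprove it, and simply to assemble these results into the stated form.
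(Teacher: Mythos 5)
The paper does not prove Theorem~\ref{thm:monotonicity} at all: it is quoted as a known result with citations to Hass--Scott, Manturov, Cahn, and Freund, and your proposal is essentially an outline of how those same cited results (curve shortening on the Carter surface plus the uniqueness of minimal-position curves up to R3 and isotopy) assemble into the statement, so you are taking the same route the paper takes by citation. Your acknowledged gap --- the bookkeeping between shortening on a fixed surface and change of surface, where your aside that ``a destabilization removes a virtual crossing'' is imprecise since stabilization is invisible at the Gauss-diagram level --- is exactly the part handled in the cited works of Cahn and Freund, which is where the paper, and you, correctly defer.
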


Monotonicity provides the following general scheme for classifying flat knots. Given a flat knot diagram, we first apply Reidemeister moves to reduce the crossing number. After a finite number of reductions, one obtains a minimal crossing diagram. Next, one determines all diagrams related to the minimal crossing diagram by Reidemeister $3$-moves. Since any two minimal crossing diagrams of the same flat knot are related by Reidemeister $3$-moves, this set of minimal crossing diagrams can be used to completely classify the flat knot type.

This scheme can be implemented as an algorithm for tabulating flat knots up to $n$-crossings. The first step is to construct all flat knot diagrams up to $n$-crossings. Each flat knot diagram is reduced to a minimal crossing diagram, which is possible by monotonicity. The next step is to determine the Reidemeister $3$ orbit of each minimal crossing diagram and record a unique representative for each one. This step uses a linear ordering on the set of flat knot diagrams and results in a unique ``name'' for each flat knot. The last step is to validate the results by calculating enough invariants of the flat knots to distinguish each pair of flat knots in the table.

Gibson applied this method to tabulate flat knots up to 4 crossings in \cite{gibson}.  He represented flat knots as nanowords (cf.~\cite{turaev06}) and used the $u$-polynomial, the $\phi$-invariant, and the 2-parity projection to distinguish the flat knot types.  This approach works for flat knots with up to 4 crossings, but the invariants are not sufficiently powerful to distinguish flat knots with five or more crossings.

In representing flat knots, we will use Lyndon words, defined as follows.

\begin{definition}
The \emph{{\tt OU} word} of a Gauss code is the binary string in $\{ {\tt O, U} \}$ obtained by removing the integers. For example, the flat knot with Gauss code {\tt O1O2O3U1U3U2}
has {\tt OU} word {\tt OOOUUU}.
We call an
{\tt OU} word \emph{Lyndon} if it is minimal up to cyclic rotation. Here the ordering has ${\tt O} < {\tt U}$. 
\label{def:lyndon2}
\end{definition}

The enumeration algorithm for flat knots \cite{flatknotinfo} starts with Lyndon words and then considers all possible matchings on them. One advantage to this approach is there is a linear time algorithm for generating Lyndon words due to Duval \cite{duval-1983}.
The Gauss diagram also determines a \textit{matching}, which is a permutation indicating how each ``{\tt O}'' is matched to the corresponding ``{\tt U}''. In fact, the Gauss diagram is completely determined by its {\tt OU} word and matching. 

To derive the matching, our convention is to first number the {\tt O}'s sequentially going counterclockwise from 12 o'clock in the diagram, and then to record the numbers of the corresponding {\tt U}'s, again going counterclockwise from 12 o'clock. Note that the ordering of crossings in the matching can differ from the ordering in the Gauss word. For example, in the matching of the flat knot diagram in Figure~\ref{fig:lyndon2},  the order of the last two crossings is switched.
In FlatKnotInfo \cite{flatknotinfo}, we name the flat knot by its Lyndon word ordering. For example, the flat knot 5.1 has the least Lyndon word representation among all $5$ crossing flat knots.

\begin{figure}[ht!]
    \centering
    \includegraphics[angle=0,scale=1.4,trim=0 0 0 0]{./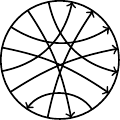}\\
    \caption{\small The flat knot  8.65741 has Gauss word {\tt O1O2O3O4O5O6U7O8U2O7U4U1U6U3U8U5}.
   Thus its {\tt OU}-word is {\tt OOOOOOUOUOUUUUUU}
   and its matching is {\tt [8 2 4 1 6 3 7 5]}.}
    \label{fig:lyndon2}
\end{figure}

The Lyndon word method is especially useful in generating the tables of checkerboard colorable flat knots and almost classical flat knots.
For those flat knots, the tables include higher crossing flat knots by using the ``{\tt OU}''-pattern:
Along the flat knot Gauss diagram we label the singular points $1,2,\dots,2n$.
If two singular points are matched as the head and tail of an arrow, then they should have even and odd numbers and be non-consecutive. We assume the tails are odd points and pair them up with the even points. We can always assume the first arrow has the least head-tail difference. 
This allows us to consider much fewer than $(n-3)( (n-1)! ) $ cases.
On each pattern, we can flip the head and tail of the arrow to obtain a checkerboard colorable diagram, so the case number is multiplied by $2^n$.
For each one, we apply Reidemeister moves until we obtain a minimal diagram and then we find its 3-orbit and minimal representative (in the linear ordering).

An alternative approach for classifying flat knots was developed by Chu \cite{chu}. Those methods apply to \textit{long} flat knots, and the classification is achieved in terms of canonical diagrams, which need not have minimal crossing number. It is not clear how to adapt those methods to the case of \textit{round} flat knots considered here.

%-=-=-=-=-=-=-=-=-=-=-=-=-=-=-=-=-=-=-=-=-=-=-=-=-=-=-=-=-=-=-=-=-=-=-
\section{Based matrices}
In this section, we review the basic notions of based matrices from \cite{turaev04} and recall the definition of the $\phi$-invariant from \cite{gibson}.  The $\phi$-invariant is a universal invariant of based matrices under equivalence, and it gives a strong invariant of flat knots. We introduce two other invariants of flat knots, the inner and outer characteristic polynomials, and they are also derived from the based matrix. The characteristic polynomials are interesting as they are flat knot analogues of the classical Alexander polynomial.

\subsection{Primitive based matrices}
Let $\alpha$ be a flat knot, realized both as a Gauss diagram $D$ and as an immersion $\omega_\alpha: S^1\rightarrow \Sigma_g$, where    $\Sigma_g $ is an oriented closed surface of genus $g$. Let $b:H_1(\Sigma_g,\Z)\times H_1(\Sigma_g,\Z)\rightarrow \Z$ be the skew-symmetric form given by the intersection pairing.
Let $\arr(D)$ be the set of arrows in $D$ and set
$G=\left\{ [e^+]\in H_1(\Sigma_g; \Z) \mid e\in \arr(D) \right\}$,
where $e^+$ is the oriented loop in $\Sigma_g$. Figure~\ref{fig:indexdef2} shows two representatives of $e^+$ in blue: on the left it appears  on the Gauss diagram as the arc from the tail to the head of arrow $e$; on the right it appears as the oriented loop in the flat knot diagram.
The core element is defined to be $s=[\omega_\alpha(S^1)]\in H_1(\Sigma_g; \Z)$ and $\bar G = \{s\} \sqcup  G$. Both $G$ and $\bar G$ are regarded as ordered sets once an ordering of the crossings has been fixed with the understanding that the core element is always first in $\bar G$.

\begin{definition}
   The \emph{based matrix} $T(D)$  associated to the triple $(G,s,b)$ with $|G|=n$ 
   and a fixed ordering of elements in $G$
    is the $( n+1 )\times (n+1)$ skew-symmetric matrix over
    $\Z$, where the $i,j$-entry of $T(D)$ is the intersection pairing of the $i$-th and
    $j$-th element of $\bar G = \{s\} \sqcup  G$.
    \label{def:basedmatrix}
\end{definition}

We now introduce algebraic analogues of the first and second Reidemeister moves for based matrices.

\begin{definition}
    Let $T=(G,s,b)$ be a based matrix. Then we say: 
\begin{itemize}
    \item $x\in G$ is an \emph{annihilating element} if $b(x,y)=0$ for all $y\in  G\sqcup\left\{ s \right\}$.
    \item $x\in G$ is a \emph{core element} if $b(x,y)=b(s,y)$ for all $y\in G$.
    \item  $x,y\in G$ are \emph{complementary elements} if $b(x,z)+b(y,z)=b(s,z)$ for all $z\in G\sqcup\left\{ s \right\}$.
\end{itemize}
A based matrix $T=(G,s,b)$ is said to be \emph{primitive} if $G$ does not contain annihilating, core, or complementary elements. 
An \emph{elementary reduction} of the based matrix $T=(G,s,b)$ is the operation of removing from $G$
an annihilating element, a core element, or a pair of complementary elements. The inverse operation is called an \emph{elementary extension}.
\label{def:primitive}
Two based matrices are said to be \emph{homologous} if one can be obtained from the other by a finite sequence of elementary extensions/reductions and isomorphisms.
\end{definition}
Referring to \cite{turaev04}*{Section~6.1}, every skew-symmetric square matrix over an abelian group determines a based matrix.
Every based matrix is obtained from a primitive based matrix by elementary extensions. Two primitive based matrices are isomorphic if and only if they are homologous.
For a based matrix $T$,  we use $T_\bullet$ to denote the associated primitive based matrix obtained from $T$ under elementary reduction.

Turaev in \cite{turaev04}*{Lemma~4.2.1} gave an algorithm for calculating the based matrix $T(D)$ with respect to the Gauss diagram $D$. We illustrate it with an example.

\begin{example}
Consider  the flat knot $5.1$, whose Gauss diagram appears in Figure~\ref{fig:indexdef0}. Using the indicated ordering of the crossings, we compute that it has based matrix with respect to  $\bar G=\left\{s,[a^+],[b^+],[c^+],[d^+],[e^+] \right\}$ given by
$$\begin{bmatrix}
 0& -4& -2&  0&  3&  3\\
 4&  0&  1&  2&  4&  3\\
 2& -1&  0&  1&  3&  2\\
 0& -2& -1&  0&  2&  1\\
-3& -4& -3& -2&  0&  0\\
-3& -3& -2& -1&  0&  0
\end{bmatrix}.
$$
Note that FlatKnotInfo \cite{flatknotinfo} provides calculations of the based matrices for all the tabulated flat knots. In addition, a Python program that calculates the based matrix from a Gauss diagram can be found in \cite{chen-thesis}*{Appendix B}.
\end{example}

Next, we introduce the $u$-polynomial, which was first defined by Turaev \cite{turaev04}. In the following, we use $n(e)$ to refer to the arrow index of $e$ (see Definition \ref{def:index}).

\begin{definition}
    Let $\alpha$ be a flat knot with Gauss diagram $D$.
    Then the $u$-polynomial of $\alpha$ is defined as
    $$
    u_\alpha(t)=\sum_{e\in \arr(D)} \text{sign}( n(e)) t^{|n(e)|}.
    $$
    \label{def:upoly}
\end{definition}

\begin{definition}
    Given a flat knot $\alpha$, its \emph{$r$-th covering} is denoted $\alpha^{(r)}$ and defined to be the flat knot obtained from a Gauss diagram $D$ of $\alpha$ after deleting arrows $\left\{ e\in \arr(D)\mid n(e)\notin r\Z \right\}$.
    \label{def:covering}
\end{definition}

The \emph{$r$-th covering} $\alpha^{(r)}$ of a flat knot was introduced in \cites{turaev04,turaev-vk-cob}, where it was shown that the $r$-th covering is a flat knot invariant representing
 the lift of $\omega_\alpha$ to the $r$-fold covering of $\Sigma$ induced by the dual in $H^1(\Sigma;\Z/r)$ of $[\omega_\alpha(S^1)]$.
 
Manturov gave a purely combinatorial description of the $r$-th covering of a flat knot in terms of Gaussian parity projection in  \cites{manturov-parity}.
A detailed explanation of the correspondence between lifts to abelian covers and Gaussian parity projection can be found in \cite{boden_sign}*{Section~5.3}.

\subsection{The $\phi$-invariant}
By choosing a different ordering of $G$, one will obtain a different based matrix. Therefore, the based matrix depends on the choice of a diagram and ordering of $G$. 
Gibson \cite{gibson} defined the $\phi$-invariant to record the information in a primitive based matrix. 

Let $D$ be a flat knot diagram, and $ T_\bullet (D)$ be a primitive based matrix of $\alpha$. 
Define $\varphi(T_\bullet (D))$ to be the entries of the columns below the diagonal of $T_\bullet (D)$.

For example, if
$$ T_\bullet(D) =
\begin{bmatrix}
0 & 1 & 0 & 0 & -1\\
-1 & 0 & 1 & 0&-2\\
0&-1&0&1&0\\
0&0&-1&0&1\\
1&2&0&-1&0
\end{bmatrix}
,
$$
then $\varphi( T_\bullet (D))=[-1,0,0,1,-1,0,2,-1,0,-1]$.

Let $\ds\phi_\alpha= \min\left\{ \varphi( T_\bullet) \mid T_\bullet\text{ is a primitive based matrix of a diagram of } \alpha   \right\}$,
where the minimum is taken with respect to the lexicographic order.
For example, $[1,2,3]<[2,2,3]<[2,3,3]$.
Refer to \cite{chen-thesis}*{Section~3.2} for the algorithm to calculate  $\phi_\alpha$.

The based matrix is a very powerful invariant. 
It can be used to separate symmetric siblings of a flat knot in many cases.
For example, using the flat Gauss diagrams of  
the flat knot $5.2$, we obtain four different $\phi$-invariants.
in Figure~\ref{fig:fk52}.
They are
\begin{align*}
\phi_\alpha&=[-3, -2, -1, 2, 4, 1, 1, 2, 3, 1, 3, 4, 1, 2, 1],\\
\phi_{-\alpha}&=[-4, -2, 1, 2, 3, 1, 3, 2, 4, 2, 1, 3, 0, 1, 0],\\
\phi_{\alpha^*}&=[-3, -2, -1, 2, 4, 0, 1, 3, 4, 0, 1, 2, 2, 3, 1],\\
\phi_{-\alpha^*}&=[-4, -2, 1, 2, 3, 1, 2, 4, 3, 1, 3, 2, 1, 1, 1].
\end{align*}
Therefore, the flat knot $5.2$ and its siblings are all distinct.

Using the $\phi$-invariant alone, we can distinguish flat knots up to 4 crossings as shown in  Table~\ref{table:phi_dis}.
\begin{table}[ht]
    \centering
\begin{tabular}{||c |c |c| c||} 
 \hline
 Crossings &  \# Flat knots  & \# Non-distinguished by $\phi$ \\ [0.5ex] 
 \hline\hline
3 & 1 & 0 \\
 \hline
4 & 11 & 0\\
 \hline
5 & 120 & 8 \\
 \hline
6 & 2086 & 74 \\
 \hline
7 & 46233 & 1375 \\
 \hline
\end{tabular}
\vspace{2mm}
\caption{Distinguishing  flat knots using $\phi$}
\label{table:phi_dis}
\end{table} 

The first non-distinguished flat knots are the two pairs
$\{5.47, 5.65\}$ and $\{5.89, 5.104\}$ of 5-crossing flat knots in Figure~\ref{fig:fk547}.
Their minimal $\phi$-invariants, up to symmetry, are given by 
$$\phi_{5.47}=\phi_{5.65}=[-2, -1, 0, 1, 2, -1, 1, 1, 3, 1, 0, 1, 0, 1, 0],  $$
$$\phi_{5.89}=\phi_{5.104}=[-1, -1, 0, 1, 1, -1, 0, 1, 1, 0, 1, 1, 1, 1, -1].  $$

\begin{figure}[ht]
   \centering
   \includegraphics[scale=1.1]{./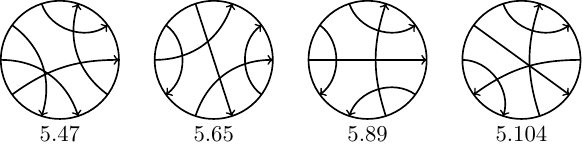}
   \caption{Two pairs of non-distinguished flat knots with 5 crossings}
   \label{fig:fk547}
\end{figure}

Up to 5 crossings, every flat knot has a nontrivial primitive based matrix. The first examples of flat knots with trivial primitive based matrix occur among the $6$-crossing flat knots, namely 6.129 and 6.899 in Figure~\ref{fig:fk6129fk6899}. These flat knots cannot be distinguished from the unknot by the $\phi$-invariant.

\begin{figure}[ht]
   \centering
   \includegraphics[scale=1.1]{./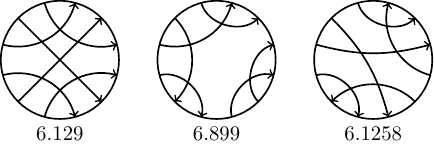}
   \caption{Three 6-crossing flat knots with trivial primitive based matrix}
   \label{fig:fk6129fk6899}
\end{figure}

Note that in Table~\ref{table:phi_dis}, the invariants of the flat knots with $n$ crossings are compared to those of  
all flat knots with $n$ or fewer crossings.
Specifically, there are three $6$-crossing flat knots with trivial $\phi$-invariant, and $19$ $7$-crossing flat knots with trivial $\phi$-invariant. In particular, these flat knots cannot be distinguished from the unknot using only $\phi$-invariants.

\subsection{Characteristic polynomials}
In this subsection, we introduce the inner and outer characteristic polynomials of a flat knot.
The motivation stems in part from the complexity of the $\phi$-invariant and the desire for a more easily computed invariant. It is also of interest since they can be viewed as flat knot analogues of the classical Alexander polynomial.

\begin{definition}
    Let $T$ be a primitive based matrix and set $ P_T(t) = \det(T-tI)$, the characteristic polynomial of  $T$. 
     Further, let $\widehat{T}$ be the matrix obtained from $T$ by deleting its first row and column, and set
     $p_T(t) = \det(\widehat{T}-tI)$, the characteristic polynomial of $\widehat{T}$.
     Then $p_T(t)$ and $P_T(t)$ are called the inner and outer characteristic polynomials of $T$, respectively. 
    \label{def:charpoly}
\end{definition}

The next result shows that inner and outer characteristic polynomials are invariant under isomorphism of primitive based matrices.

\begin{proposition} \label{prop-iso-inv}
 Let $T$ and $T'$ be primitive based matrices. If $T$ and $T'$ are isomorphic,
 then $P_T(t)=P_{ T' }(t)$ and $p_T(t)=p_{ T' }(t)$.
\end{proposition}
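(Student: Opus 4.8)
The plan is to unwind the definition of an isomorphism of based matrices and observe that at the level of matrices it amounts to conjugation by a permutation matrix fixing the distinguished first coordinate; the proposition then reduces to the elementary fact that the characteristic polynomial is a conjugacy invariant.

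First I would record the concrete form of such an isomorphism. If $T=(G,s,b)$ and $T'=(G',s',b')$ are based matrices, an isomorphism is a bijection $f\colon \bar G\to \bar G'$ with $f(s)=s'$ and $b'(f(x),f(y))=b(x,y)$ for all $x,y\in\bar G$. In particular $|G|=|G'|=:n$, and once the orderings are fixed, with the core elements $s,s'$ placed first as in Definition~\ref{def:basedmatrix}, the map $f$ restricts to a permutation $\sigma\in S_n$ of the remaining $n$ elements. Let $R$ be the $(n+1)\times(n+1)$ permutation matrix induced by $f$; because $f(s)=s'$ it has block form $R=\begin{pmatrix}1&0\\0&Q\end{pmatrix}$ with $Q$ the $n\times n$ permutation matrix of $\sigma$, and the two defining identities for an isomorphism say precisely that $T'=R^{-1}TR$.

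The outer case is then immediate: by multiplicativity of the determinant, $P_{T'}(t)=\det(T'-tI)=\det(R^{-1}(T-tI)R)=\det(T-tI)=P_T(t)$. For the inner case I would use that $R$ fixes the first standard basis vector. Writing $T$ in block form $T=\begin{pmatrix}0&v^{\top}\\ w&\widehat T\end{pmatrix}$, where $\widehat T$ is the $n\times n$ block indexed by $G$ (the $(1,1)$-entry equals $b(s,s)=0$ by skew-symmetry, though its value is irrelevant), one computes $R^{-1}TR=\begin{pmatrix}0&v^{\top}Q\\ Q^{-1}w&Q^{-1}\widehat T Q\end{pmatrix}$. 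Hence the matrix obtained from $T'$ by deleting its first row and column is $\widehat{T'}=Q^{-1}\widehat T Q$, and therefore $p_{T'}(t)=\det(\widehat{T'}-tI)=\det(Q^{-1}(\widehat T-tI)Q)=\det(\widehat T-tI)=p_T(t)$.

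I do not anticipate a genuine obstacle; the one point that deserves care is the observation that an isomorphism of based matrices necessarily carries $s$ to $s'$. This is exactly what forces the change-of-basis matrix $R$ to be block-diagonal with a $1$ in the upper-left entry, which in turn is what makes deletion of the first row and column compatible with the conjugation, so that the inner characteristic polynomial, and not merely the outer one, is preserved.
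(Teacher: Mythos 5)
Your argument is correct and follows essentially the same route as the paper: an isomorphism of based matrices is a congruence by a permutation matrix fixing the first (core) coordinate, such a congruence is in particular a conjugation, and conjugation preserves characteristic polynomials, with the block form of the permutation matrix making deletion of the first row and column compatible with the conjugation for the inner polynomial. Your write-up simply makes the block computation explicit where the paper states it in words.
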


\begin{proof}
    Recall that isomorphism of primitive based matrices is defined as a congruence by permutation matrices. But two matrices that are congruent by permutation matrices are necessarily conjugate.
    Since the characteristic polynomial is  invariant under conjugation, it follows that
    $P_T(t)=P_{ T' }(t)$.
    Note that the permutation sends the first element of the ordered set $\bar G$ to the first element of $\bar G'$.
    Therefore, a similar argument shows that $\widehat{T}$ and $\widehat{T'}$ are conjugate, and it follows that $p_T(t)=p_{ T' }(t)$.
\end{proof}

In \cite{turaev04},  Turaev proved that any two primitive based matrices for the same flat knot are isomorphic. Therefore, given a flat knot $\alpha$ 
with primitive based matrix $T$, Proposition \ref{prop-iso-inv} implies that the inner and outer characteristic polynomials of $T$ give well-defined invariants of the flat knot by setting 
$ p_\alpha(t) =p_T(t)  $ and $ P_\alpha(t) =P_T(t)$. 

For example, if $U$ denotes the flat unknot, then its primitive based matrix is $[0]$ and its inner and outer polynomials are $p_U(t)=1 $ and $ P_U(t)=t$.

The based matrix of a flat knot can be viewed as an analogue, for flat knots, of the Seifert matrix of a classical knot. Thus, the inner and outer characteristic polynomials are analogues of the classical Alexander polynomial. Indeed, for fibered knots, the Alexander polynomial has a natural interpretation as the characteristic polynomial of the induced map, on homology, of the monodromy of the fibering. Thus, we wonder whether the inner and outer polynomials have a similar topological interpretation.

%-=-=-=-=-=-=-=-=-=-=-=-=-=-=-=-=-=-=-=-=-=-=-=-=-=-=-=-=-=-=-=-=-=-=-
\section{Sliceness for flat knots}
In this section, we review the notions of sliceness, ribbonness, and algebraic sliceness for flat knots. Based on empirical data, it is tempting to conjecture that every algebraically slice flat knot is slice, and that every almost classical flat knot is algebraically slice. We present counterexamples to both conjectures, namely flat knots that are algebraically slice but not slice, as well as almost classical flat knots that are not algebraically slice. In the tabulation, the first example of an algebraically slice but not slice flat knot has 6 crossings, and the first example of an almost classical flat knot that is not slice has 11 crossings.

\subsection{Definitions}
In \cite{turaev04}, Turaev introduced the notions of sliceness and algebraic sliceness for flat knots. Interestingly, the first example of a non-slice flat knot was discovered earlier by Carter in \cite{carter}.

In this subsection, we recall the basic definitions of slice, ribbon, and  algebraically slice for flat knots.

\smallskip \noindent
{\bf Sliceness.}
We first recall the definition of sliceness for flat knots and phrase it in terms of slice movies. 
%The following definition is similar to the definition of sliceness for classical knots.

\begin{definition}
Let $\alpha$ be a flat knot represented by an immersion $\omega_\alpha : S^1 \looparrowright \Sigma$.
Then $\alpha$ is said to be \emph{slice} if there exists a compact oriented 3-manifold $M$ with $\partial M = \Sigma$
and a properly immersed disk $D \looparrowright M$ whose boundary is $\omega_\alpha(S^1)$.
    \label{def:sliceflat}
\end{definition}

\begin{figure}[ht]
    \centering
    \includegraphics[scale=0.9]{./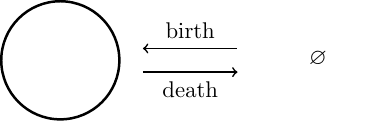}\\
    \includegraphics[scale=0.9]{./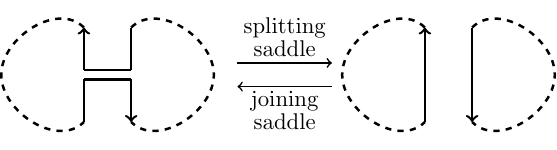}
    \caption{Birth, death, and saddle moves}
    \label{fig:saddle}
\end{figure}

\begin{figure}[ht]
    \centering
    \includegraphics[scale=1.20]{./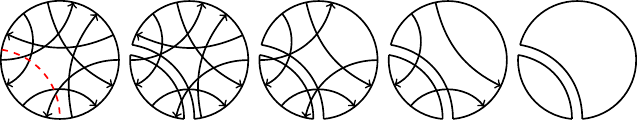}
    \caption{A slice movie for the flat knot 7.45422}
    \label{fig:fk745422}
\end{figure}

Equivalently, a flat knot $\alpha$ is slice if a diagram of $\alpha$ can be transformed to the trivial diagram by a sequence of flat Reidemeister moves, births, deaths and saddle moves shown in Figure~\ref{fig:saddle}. We further require that the number of saddles is equal to the sum of births and deaths, and that the cobordism surface is connected.
Such a sequence is called a \emph{slice movie}.

Clearly, sliceness of a flat knot can be demonstrated by drawing a slice movie, and actually this step can be achieved directly on a Gauss diagram for the knot. For example, Figure~\ref{fig:fk745422} shows how to transform the flat knot 7.45422 into the trivial flat knot using one saddle move and Reidemeister moves on its Gauss diagram. It follows that 7.45422 is slice. This idea of slicing virtual knots directly on their Gauss diagrams is originally due to Robin Gaudreau, and the method works equally well for flat knots. The author is grateful to them for sharing their idea.

\smallskip \noindent
{\bf Ribbonness.}
Next, we introduce a notion of ribbonness for flat knots. As we shall see, this definition of ribbon is different from the one in \cite{turaev04}.

\begin{definition}
A flat knot is said to be \emph{ribbon} if it admits a slice movie with only saddles and deaths.
\end{definition}

For example, since
the slice movie in Figure~\ref{fig:fk745422} has no births, it shows that the flat knot 7.45422 is actually ribbon. For a second example, a standard argument shows that, for any flat knot $\alpha$, the composite knot $-\alpha^*\# \alpha$ is ribbon, see Figure \ref{fig:fk61913}.

It is clear that, if a given flat knot is ribbon, then it is necessarily slice. It is natural to ask whether the converse is true.
\begin{problem}
        Is every  flat knot that is slice also ribbon? 
    \label{prob:sliceribbon}
\end{problem}

This is a flat knot analogue of the famous slice-ribbon conjecture.  

A different notion of ribbon is defined in \cite{turaev04}, and we refer to that here as \emph{strongly ribbon}. A flat knot $\alpha$ is said to be \emph{strongly ribbon} if it admits a Gauss diagram $D$ with $D=-D^*$. In \cite{turaev04}, Turaev proved that if $\alpha$ is strongly ribbon, then it is slice.  In \cite{MR2207902}, Silver and Williams gave an example of a long flat knot that is slice but not strongly ribbon. It is not difficult to see that the example in \cite{MR2207902} is ribbon. In fact, every strongly ribbon flat knot is ribbon, and this can be proved using a nested sequence of saddle moves similar to the slice movie in Figure~\ref{fig:fk61913}. 

\begin{figure}[ht]
    \centering
\includegraphics[scale=0.85,trim=0 0 0 0]{./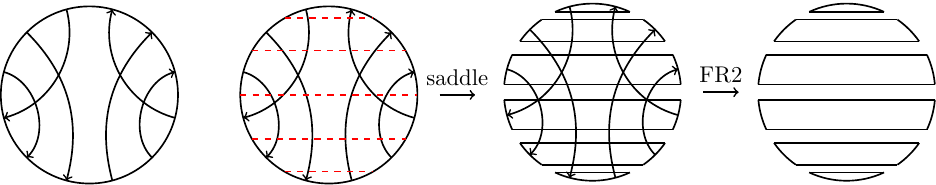}
    \caption{Slice movie for $-\alpha^*\# \alpha$}
    \label{fig:fk61913}
\end{figure}

\smallskip \noindent
{\bf Algebraic sliceness.}
We recall the definitions of the algebraic genus and algebraic sliceness for flat knots, following Turaev \cite{turaev04}. 
\begin{definition}  
    Given a based matrix $T$ with respect to the triple $(G,s,b)$, 
    a \emph{filling} $\chi=\left\{ X_i \right\}_{1\le i\le k}$ is a partition of $G$ such that 
    $G=\bigcup_{1\le i\le k} X_i$ where $X_i\cap X_j=\varnothing$ if $i\neq j$, 
    and $|X_i|\in\left\{ 1,2 \right\}$ for any $i$.
    Let $y_i=\sum_{x\in X_i} x$ (as the formal sum in the free module $\Z G$) and $G'=\left\{  y_i\right\}_{1\le i\le k}$.
    The intersection form $b$ extends to $G'$ by linearity.

Then we obtain a new triple $( G',s, b )$ from this filling, and a new based matrix $\widehat T$ associated with it.
The \emph{genus of a based matrix} $\sigma(T)$ is defined to be   $\frac{1}{2}\min\rank(\widehat T)$ over all possible fillings,
where the $\rank(\,\cdot\,)$ refers to the rank of the integral matrix.
A based matrix $T$ is said to be \emph{null-concordant} if $\sigma(T)=0$.
\label{def:genusmatrix}
\end{definition}

By \cite{turaev04}*{Lemma~7.1.1},  the genus of the based matrix gives an invariant of flat knots called its \emph{algebraic genus}.

\begin{definition}
    The algebraic genus of a flat knot $\alpha$ is denoted $g_a(\alpha)$ and given by the genus $\sigma(T)$ of its based matrix (not necessarily primitive).
    
A flat knot $\alpha$ is said to be \emph{algebraically slice} if $g_a(\alpha)=0$, namely if its
based matrix is null-concordant.  
 \label{def:alg_genus}
\end{definition}

Thus we can visualize the fillings on a Gauss diagram since the generator set of a based matrix corresponds to the arrow set of the Gauss diagram.
As shown in Figure~\ref{fig:k6464_1}, the based matrix of the diagram is 
$$
\begin{bmatrix}
    \gray{0} &\gray{-1}&\gray{1}&\gray{-2}&\gray{0}&\gray{2}\\
\brown{1} & \brown{0}& \brown{1}&\brown{ -2 }& \brown{0}&\brown{2}\\
\brown{-1}&\brown{-1}&\brown{0}&\brown{-2}&\brown{0}&\brown{2}\\
2 & 2& 2& 0& 1&2\\
\blue{0} &\blue{0}&\blue{ 0}&\blue{-1}&\blue{0}&\blue{1}\\
-2&-2&-2&-2&-1&0
\end{bmatrix}
$$
The last five rows correspond to the five arrows from 12 o'clock of the Gauss diagram in counterclockwise order.
The filling $[(1, 2),(3, 5), (4)]$, denoted by dashed brown, black, and dotted blue, respectively, gives the algebraic genus zero.
    When a diagram has a slice movie consisting of only splitting saddles, deaths, FR3 and decreasing FR1 and FR2 moves,
    the FR1 and FR2 moves determine a filling with $\sigma(T)=0$.
For example, after applying a splitting saddle move to the diagram in Figure~\ref{fig:k6464_1}, the arrow pairs (1,2) and (3,5) can be removed by FR2 moves, and the 4th arrow can be removed by an FR1 move.

\begin{figure}[ht]
   \centering
   \includegraphics[scale=1.2]{./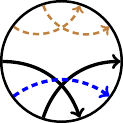}
   \caption{Fillings to show $g_a(5.21)=0$ }
   \label{fig:k6464_1}
\end{figure}

\subsection{Algebraic sliceness does not imply sliceness}
\label{sec:algebraicslice}
In \cite{turaev04}, Turaev proved that any flat knot that is slice is necessarily algebraically slice. It is natural to ask whether the converse is true: is every algebaically slice flat knot also slice? We will find a counterexample to show that the answer is no. In fact, we will find several examples of flat knots that are algebraically slice but not slice.

    Recall the $r$-th covering $\alpha^{(r)}$ of flat knot $\alpha$ in Definition~\ref{def:covering}.
In the proof of \cite{turaev04}*{Corollary~5.17}, Turaev showed that if $\alpha$ is slice, then its $r$-th covering $\alpha^{(r)}$ is also slice.
We use this with $r=3$ to give the first example of a flat knot that is algebraically slice but not slice, see Figure~\ref{fig:k6464}.

\begin{example}
    The flat knot 6.464  is algebraically slice but not slice.
    \label{eg:6464}
\end{example}
\begin{proof}
    Consider the flat knot 6.464 in Figure~\ref{fig:k6464}. Its based matrix can be shown to have genus $0$ since there exists a filling as in Figure~\ref{fig:k6464_2}-left. Thus, the based matrix is null-concordant. Therefore 6.464 has $g_a(\alpha)=0$ and is algebraically slice.
    On the other hand, its $3$-fold covering is the flat knot $-$4.2. By our calculation \cite{flatknotinfo}, the based matrix of 4.2 has genus $1$, and all the corresponding fillings of minimal diagrams are listed in Figure~\ref{fig:k6464_2}-right.
Thus 4.2 is not algebraically slice and not slice. 
Therefore, 6.464 is also not slice.
\end{proof}

An alternative argument to show that the flat knot 4.2 is not slice is to use the fact that its $u$-polynomial is $-t^3+t^2+t$ and to recall that any flat knot that is slice must have trivial $u$-polynomial.
By \cite{flatknotinfo}, up to 6 crossings,
6.464 is the only known flat knot that is algebraically slice but not slice.
The only other potential example is 6.540, which is algebraically slice but not known to be slice. In fact, 6.540 is the only flat knot up to 6 crossings whose slice state is unknown.

\begin{figure}[ht]
   \centering
   \includegraphics[scale=0.9]{./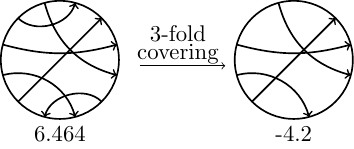}
   \caption{The $3$-fold covering of the flat knot 6.464}
   \label{fig:k6464}
\end{figure}

\begin{figure}[ht]
   \centering
   \includegraphics[scale=0.85]{./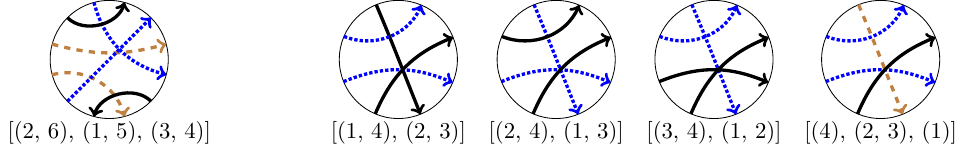}
   \caption[Fillings showing that $g_a(6.464)=0$ and $g_a(4.2)=1$]{Fillings showing that $g_a(6.464)=0$ (left) and $g_a(4.2)=1$ (right)}
   \label{fig:k6464_2}
\end{figure}

    A slice obstruction arising solely from based matrices is regarded as a  \emph{primary obstruction}.
In \cite{turaev04}*{Question~2}, Turaev asks if one can detect non-slice flat knots using \emph{secondary obstructions}, see \cite{turaev04}*{Section~8.4}.
Note that the flat knot 6.464 gives an example, and it is the first one in the tabulation of \cite{flatknotinfo}.
By \cite{turaev04}*{Lemma~8.4.1}, the arrows annihilated by the core element should form a slice flat knot. Therefore, the flat knot 6.464 is seen to be non-slice by the secondary obstructions.

There are additional examples of flat knots which are algebraically slice but not slice in Figure~\ref{fig:fk7nonslice}. Each is seen to be non-slice using parity projection, and one of them, the knot 7.25725, even has trivial primitive based matrix.

\begin{figure}[ht!]
   \centering
   \includegraphics[scale=0.9]{./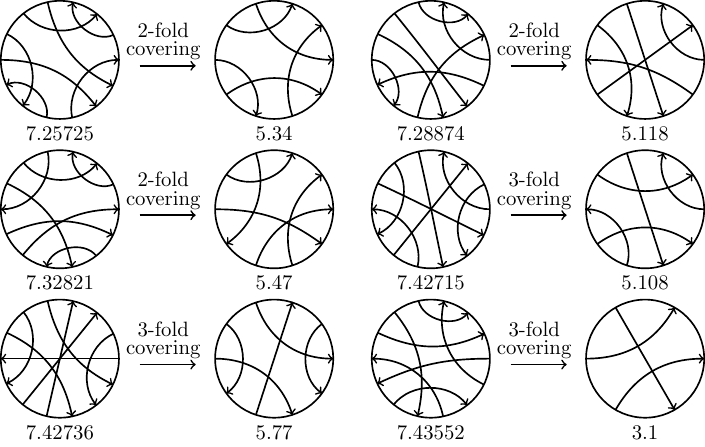}
   \caption{Six algebraically slice flat knots that are not slice}
   \label{fig:fk7nonslice}
\end{figure}

\subsection{Almost classicality does not imply sliceness}
FlatKnotInfo \cite{flatknotinfo} lists slice information for most flat knots up to 7 crossings. These results were obtained by a combination of obstructive and constructive methods. Specifically, we first computed the based matrices and determined which flat knots are algebraically slice. Then we searched among the residual set of flat knots to see if we could find slice movies for them. This approach determined sliceness for all but one flat knot up to 6 crossings, namely the flat knot 6.540. It also worked for the 7-crossing flat knots with the exception of the eight examples in Figure \ref{fig:fk_7slicesuspect}.

In the course of performing these computations, we noticed a pattern, which is that the almost classical flat knots up to 7 crossings all had algebraic genus $g_a=0$ and were all slice.
Based on this, we formulated a conjecture that almost classical flat knots are all slice.

Recall that a flat knot $\alpha$ is said to be \emph{almost classical} if it can be represented by a null homologous curve $\omega_\alpha:S^1 \to \Sigma$. Thus, $\alpha$ is almost classical if and only if it bounds an oriented surface immersed in $\Sigma$. One approach to establishing the conjecture is to perform surgery on the immersed surface to transform it into an immersed disk. (Roughly speaking, this is the approach Levine pioneered in \cite{levine69} for higher dimensional knots.) 

After considerable effort and repeated failures, we decided to check the conjecture on a larger set of examples. For this purpose, we developed a knot slicer program which applies Reidemeister moves and splitting saddle moves in search of a slice movie. There are 1906 almost classical knots with up to 10 crossings, and the program found slice movies for each one. 

With this confirmation, we were even more convinced the conjecture was true. However, attempts to prove it still fell short. In the meantime, we extended the tabulation of almost classical flat knots to 11 and 12 crossings. For instance, there are 
18002 almost classical flat knots with 11 crossings.
When we ran the knot-slicer program over the new sets of flat knots, we found a small subset for which it failed to find a slice movie. We then discovered, to our surprise, that some of these almost classical knots have nonzero algebraic genus. In particular, these flat knots are not slice and give counterexamples to the conjecture.

\begin{example}
Consider the flat knot ac11.7183 in Figure~\ref{fig:ac11}. It has based matrix 
{\footnotesize
$$\begin{bmatrix*}[r]
0&  0&  0&  0&  0&  0&  0&  0&  0&  0&  0&  0\\
0&  0& -1&  1&  0&  1&  0&  0&  0& -1& -1&  1\\
0&  1&  0&  0&  2&  0& -1&  0& -1& -2& -1&  2\\
0& -1&  0&  0&  0&  0&  1&  1&  0&  0&  0& -1\\
0&  0& -2&  0&  0&  2&  0&  1& -1& -2& -1&  3\\
0& -1&  0&  0& -2&  0&  0& -1&  2&  3&  1& -2\\
0&  0&  1& -1&  0&  0&  0&  0&  0&  1&  0& -1\\
0&  0&  0& -1& -1&  1&  0&  0&  0&  1&  0&  0\\
0&  0&  1&  0&  1& -2&  0&  0&  0&  0&  1& -1\\
0&  1&  2&  0&  2& -3& -1& -1&  0&  0&  1& -1\\
0&  1&  1&  0&  1& -1&  0&  0& -1& -1&  0&  0\\
0& -1& -2&  1& -3&  2&  1&  0&  1&  1&  0&  0
\end{bmatrix*}.
$$} 
We claim that this flat knot has algebraic genus
$g_a(\text{ac}11.7183)=1$.
To see this, recall from Definitions~\ref{def:genusmatrix} and \ref{def:alg_genus} that the algebraic genus is determined by considering all possible fillings and taking the one whose associated based matrix has minimal rank. For the above matrix, the filling
$[(1, 11), (2, 10), (3,9),(4, 5), (6,8), (7) ]$ has associated based matrix $T$ with rank 2. Further, none of the other fillings produce based matrices with smaller rank. It follows that $g_a(\text{ac}11.7183)=1$, and this shows that $\text{ac}11.7183$ is not algebraically slice and so not slice.

In fact, among all 11-crossing almost classical flat knots, there are 25 that are not algebraically slice and hence not slice; see Figure~\ref{fig:ac11}. 
\end{example}

\begin{figure}[ht]
    \centering
\includegraphics[angle=0,scale=1.0,trim=0 0 0 0]{./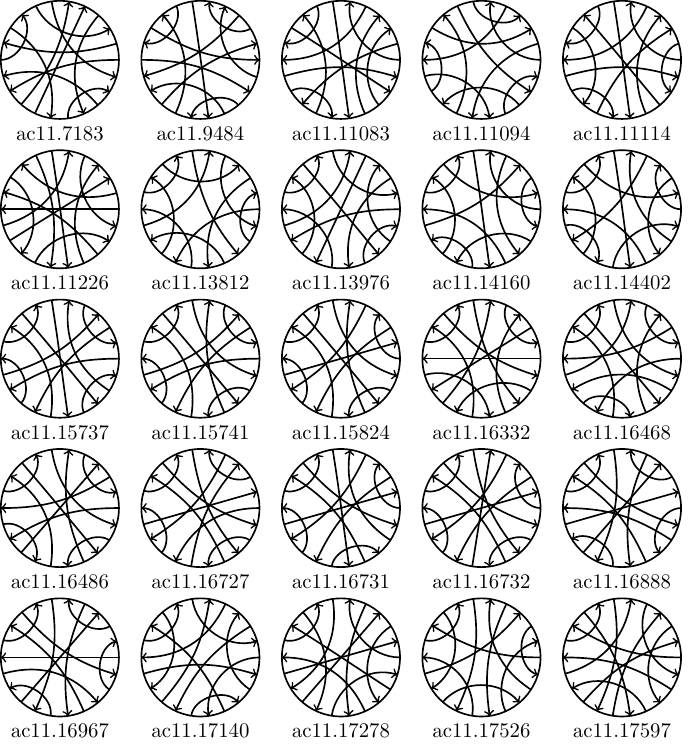}
\caption{Non-slice almost classical flat knots}
\label{fig:ac11}
\end{figure} 
We wonder if there exists a non-slice almost classical flat knot whose Seifert genus is one. Such an example could shed light on the problem of commutativity of long flat knot concordance group.

\section{The flat arrow polynomial}
\label{chap:arr}
In this section, we define the flat arrow polynomial as an invariant of oriented flat links. We then discuss free equivalence, checkerboard colorability and almost classicality. We show that the leading coefficient of the flat arrow polynomial of any checkerboard colorable flat knot is necessarily even. We also introduce the cabling operation for flat knots and use it to give strengthen the arrow polynomial.

\subsection{Definition and basic properties}
In this subsection, we define the flat arrow polynomial of flat knots and links. 
It is closely related to the arrow polynomial of virtual knots and links, which was originally introduced by Dye and Kauffman \cite{virtualarrow}, and independently Miyazawa \cite{miyazawa}, as a powerful generalization of the virtual Jones polynomial \cites{jones, kauffman99}. For convenience, we follow the notational conventions for cusps with multiplicities as used by Miller \cite{miller22}, suitably adapted to the context of flat links. 

The flat arrow polynomial is defined by applying the skein relation of Figure~\ref{fig:skein} to each flat crossing of a flat link diagram $D$. It is an oriented skein relation, so one needs to fix an orientation for $D$ and use it to orient all the edges of $D$. Notice that the local orientations on the edges are preserved under the skein relation of Figure~\ref{fig:skein}. 
At each flat crossing in $D$, we replace each flat crossing by an oriented smoothing and a disoriented smoothing. Virtual crossings are not affected. If $\alpha$ has $n$ flat crossings, then there will be $2^{n}$ states. Each state will consists of loops with only virtual crossings, representing a trivial virtual flat link.

When performing a disoriented smoothing, we introduce hollow triangles, that represent \emph{cusps}  and they freely pass through virtual crossings. Each cusp is a degree two vertex with a choice of vertex orientation. Using the rules in Figure~\ref{fig:arrow2}, the cusps can be reduced and/or combined into multiples until each state consists of a finite number of loops, each with only one cusp with multiplicity $m \in \Z$. Moreover, the total cusp multiplicity for each loop will be even, possibly zero, as we now explain.

Note that each disoriented smoothing gives rise to two cusps, so in any state there are an even number of cusps in total. In fact, the total number of cusps is equal to twice the number of disoriented smoothings. We claim that each loop in the state will have an even number of cusps. To see this, consider the local orientations as one travels around a loop. It switches only at a cusp, and going all the way around and coming back to a start point, one must encounter an even number of cusps since the local orientations must have switched an even number of times. It follows that after reduction and combination, the total cusp multiplicity on each component is even.

\begin{figure}[ht]
   \centering
\includegraphics[scale=0.9]{./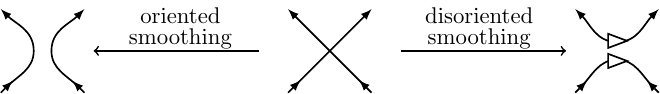}
\caption{Skein relation for the flat arrow polynomial}
   \label{fig:skein}
\end{figure}

\begin{figure}[ht]
   \centering
\includegraphics[scale=0.9]{./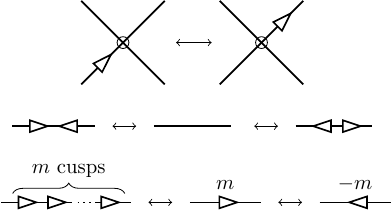}
\caption{Rules for cusp propagation, reduction and combination}
   \label{fig:arrow2}
\end{figure}

If the state $S = C_1 \sqcup \cdots \sqcup C_\ell$ consists of $\ell$ components with $C_i$ having cusp multiplicity $m_i,$ then it evaluates to $\langle \, S \, \rangle = \prod_{i=1}^\ell K_{|m_i|/2}$. The convention here is that $K_0=1,$ namely components with no cusps evaluate to $1$.

\begin{definition}
    For a flat knot or link  diagram $D$, 
    the \emph{flat arrow polynomial} is defined to be $$\ds A(D)=\sum_S (-2)^{|S|-1} \langle \, S \, \rangle,$$
    where the sum is over all states $S$ and $|S|$ denotes the number of loops in $S$.

    The \emph{(normalized) arrow polynomial of flat link $\alpha$} is defined to be  $$\ds \bar A(\alpha)=(-1)^{ cr(D)} A(D) ,$$ where $D$ is a flat knot diagram of $\alpha$.
    \label{def:arrowpoly}
\end{definition}
The next result was originally proved by Kauffman \cite{kauffman12}*{Theorem~8.2}. 

\begin{theorem}[Kauffman]
The normalized flat arrow polynomial is an invariant of oriented flat links taking values in $\Z[K_1,\ldots,K_n]$.
\label{thm:arrowpolyinv}
\end{theorem}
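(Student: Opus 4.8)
The plan is to show that the normalized flat arrow polynomial $\bar A(\alpha)$ is unchanged under each of the flat Reidemeister moves (FR1, FR2, FR3) and under the virtual moves (VR1, VR2, VR3, FVR4). Since $\bar A(\alpha) = (-1)^{cr(D)} A(D)$ and the unnormalized polynomial $A(D)$ is manifestly a Laurent-polynomial-free expression in $\Z[K_1,\dots,K_n]$ built from the state sum — each state contributing $(-2)^{|S|-1}\prod_i K_{|m_i|/2}$ with nonnegative integer exponents — the only thing to check is invariance under the moves. Throughout, the virtual moves and FVR4 are the easiest: the skein relation at a flat crossing does not interact with virtual crossings, and cusps slide freely through virtual crossings by the rules of Figure~\ref{fig:arrow2}, so the set of states and their evaluations are literally unchanged (in particular $cr(D)$ is unchanged, as virtual crossings are not counted).

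Next I would handle the three flat Reidemeister moves one at a time, working locally inside a disk where the move takes place and comparing the two resulting state sums. For FR1, the local picture has one flat crossing; applying the skein relation produces an oriented-smoothing state (which, after removing a curl, reproduces the diagram without the crossing) and a disoriented-smoothing state that creates a small loop carrying two cusps of opposite vertex-orientation. By the cusp combination rules these two cusps cancel, so the small loop has cusp multiplicity zero and evaluates to $K_0 = 1$; this disoriented state is then a single extra trivial loop relative to the oriented state, contributing a factor $(-2)$. Summing $1 + (-2) = -1$ against the change $cr(D) \to cr(D)+1$ (which contributes a compensating $(-1)$ in the normalization) gives invariance of $\bar A$. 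For FR2, the local picture has two flat crossings and four states; the $(+,+)$ and nothing-nothing smoothings reproduce the diagram without crossings, while the mixed smoothings each produce a loop whose two cusps again cancel — one checks that the $2^2$ state contributions collapse (via the $(-2)$ weight bookkeeping) to exactly the contribution of the two-fewer-crossing diagram, matching the $(-1)^2$ normalization factor. For FR3, there are $2^3 = 8$ states on each side; here one sets up a bijection between the states of the two local diagrams that preserves both the number of loops and the total cusp multiplicity on each loop, using the fact that the local orientations on the six outer endpoints are fixed by the move. This is the standard Reidemeister-3 argument for Kauffman-bracket-type state sums, adapted to track cusps.

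The main obstacle I expect is the bookkeeping of cusps in the FR2 and especially FR3 verifications: one must be careful that, under the chosen state bijection, cusps produced by disoriented smoothings can be slid (through virtual crossings and past one another) and combined so that corresponding loops on the two sides carry equal total multiplicity $m_i$, rather than merely equal parity. The parity is automatic from the argument already given in the text (local orientations switch an even number of times around any loop), but matching the actual integer multiplicities requires invoking the cusp propagation/reduction/combination rules of Figure~\ref{fig:arrow2} carefully in each configuration. Once that is done, the weight $(-2)^{|S|-1}$ depends only on loop counts and the evaluation $\langle S\rangle$ depends only on the $|m_i|$, so equal data on both sides yields equal state sums and hence $A(D) = A(D')$ up to the $(-1)^{\Delta cr}$ discrepancy that the normalization is designed to absorb. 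I would then remark that the same local computations, read for links, show invariance for multi-component flat links, establishing the theorem in the stated generality with values in $\Z[K_1,\dots,K_n]$.
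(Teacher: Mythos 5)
Your overall strategy---a direct check of invariance under the flat and virtual Reidemeister moves---is legitimate, and it is essentially the ``first principles'' proof that the paper delegates to the thesis; the paper itself takes a different route, deducing the theorem from the Dye--Kauffman arrow polynomial of virtual links by setting $a=1$ and noting that the specialized skein relation is invariant under crossing changes, so the resulting invariant descends to flat links and coincides with $\bar A$. However, as written your FR2 and FR3 verifications contain genuine errors, not just omitted bookkeeping. For FR2, of the four local states exactly one reproduces the crossing-free tangle; the other three all have the turnback (cap/cup) connectivity, one of them with an extra closed loop, and invariance comes from the cancellation $(-2)+1+1=0$ among those three. Your claimed structure---that two of the states reproduce the crossing-free diagram while each mixed state contributes an extra loop---would give a local factor $1+1+(-2)+(-2)=-2$, i.e.\ $\bar A$ would be multiplied by $-2$ under FR2, so the collapse you assert does not occur as described. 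Relatedly, the cusps in these states do not always cancel locally (in the parallel-strand case each turnback state carries one surviving cusp on the cap and one on the cup); what makes the computation work is that all three turnback states carry identical cusp data, so they cancel as a group.

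For FR3, the bijection you propose---matching the $2^3$ states on the two sides so that loop counts and cusp multiplicities agree---does not exist in general; this already fails for the ordinary Kauffman bracket, where the R3 identity holds only after either invoking R2 invariance on one partial smoothing or allowing cancellations among states. Compare the paper's explicit FR3 computation for the flat Jones--Krushkal bracket in Proposition~\ref{prop:jkfr3}, where terms combine as $X-2X+X$ rather than matching one-to-one. So this step should be replaced by the standard argument: expand one crossing, observe that one smoothing yields diagrams identified by planar isotopy and the other yields diagrams handled by the FR2-type cancellation, tracking cusps through the rules of Figure~\ref{fig:arrow2}. Finally, a small slip in FR1: it is the oriented (Seifert) smoothing of a kink that produces the disjoint circle (with no cusps), while the disoriented smoothing gives a single arc whose two cusps cancel; your attribution is reversed, though the resulting factor $-2+1=-1$, and hence the normalization argument, is unaffected.
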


A proof of Theorem~\ref{thm:arrowpolyinv} from first principles can be found in \cite{chen-thesis}. Alternatively, the proposition can be deduced from properties of the arrow polynomial of virtual links  \cite{virtualarrow}, which recall takes values in $\Z[a^{\pm 1},K_1,\ldots,K_n]$. Setting $a=1$, one obtains a polynomial invariant of virtual links which the skein relation implies is also invariant under crossing changes. Thus it is an invariant of flat links and coincides with the flat arrow polynomial defined above. 

Let $\pi$ denote the shadow map from virtual knots to flat knots. If $L$ is a virtual link with arrow polynomial  $f(L)\in\Z[a^{\pm 1},K_1,\ldots,K_n]$ and 
shadow image $\alpha=\pi(L)$, then $\bar A(\alpha)=f(L)\vert_{a=1}$.

Further, it is well-known that substituting $K_i=1$ into the arrow polynomial of $L$ gives the Jones polynomial of $L$. Further, for any virtual links, that setting $a=1$ in the Jones polynomial gives 1. Thus, for any flat knot $\alpha$, it follows that 
\begin{equation} \label{eqn:flatarroweqn}
\bar A(\alpha)\vert_{K_i=1}=1.    
\end{equation}

The following example shows that the flat arrow polynomial is not multiplicative under connected sum. 
For example, the flat knots 4.5 and 6.132  in Figure~\ref{fig:fk6132} are connected sums of two diagrams of the flat unknot. However, their flat arrow polynomials are 
\begin{align*}
\bar{A}(4.5) & = -4 K_1^2 + 2 K_2 + 3,\\
\bar{A}(6.132) & = -16K_1^4 + 8K_1^2K_2 + 8K_1^2 + 1.
\end{align*}
If the flat arrow polynomial were multiplicative, then we would have $\bar{A}(4.5)=\bar{A}(6.132)= 1.$ Since that is not the case, we conclude that $\bar{A}(\alpha)$ is not multiplicative under connected sum.

The flat knots 6.139 and 6.549 in Figure~\ref{fig:fk6132} 
can both be realized as $D\# D'$, where $D$ and $D'$ are minimal crossing diagrams  of $-$3.1 and 3.1, respectively. However, their flat arrow polynomials are 
\begin{align*}
    \bar{A}(6.139) & = 4 K_1^2 K_2 - 4 K_1 K_3 + K_4,\\
\bar{A}(6.549) & = 1.
\end{align*}
Observe that the flat arrow polynomial of $D$ is nontrivial, indeed $\bar{A}(3.1) = 2 K_1^2 - K_2.$ The same is true for $D'$.
However, their connected sum $6.549 =D\#D'$ has the trivial flat arrow polynomial.
These examples also show that the constant term of the flat arrow polynomial is not multiplicative under connected sum.

\begin{figure}[ht]
   \centering
   \includegraphics[width=0.9\textwidth]{./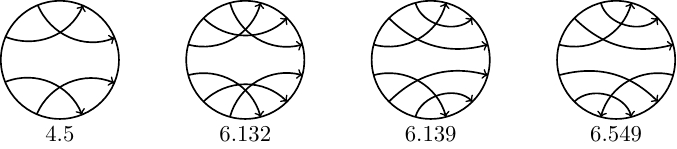}
   \caption{The flat arrow polynomial is not multiplicative under connected sum.}
   \label{fig:fk6132}
\end{figure}

\subsection{Free equivalence, checkerboard colorability, and almost classicality}
In this subsection, we relate checkerboard colorability and almost classicality of flat knots.
When the flat knot is almost classical, we show that its flat arrow polynomial is trivial, and when it is checkerboard colorable,
we show its flat arrow polynomial has an odd constant term.

We begin by defining the notion of free equivalence of flat knots.
\begin{definition}[\cite{turaev08}]
    Two flat knots are said to be \emph{free-equivalent} if they are related by the following relation.
    The set of \emph{free knots} consists of flat knots modulo free-equivalence.
\begin{figure}[ht]
    \centering
    \includegraphics[scale=1.0]{./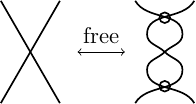}
    \hspace{22pt}
    \includegraphics[scale=1.0]{./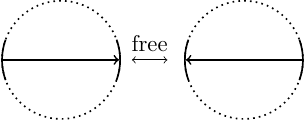}
    \caption[Free-equivalence]{Free-equivalence on a flat knot diagram and a flat Gauss diagram}
    \label{fig:freemove}
\end{figure}
\end{definition}

Note that all flat knots up to crossing number 4 are free-equivalent to the unknot.
In \cite{manturov-free}, Manturov defined a non-negative integer-valued
invariant $L$ of free knots (cf. \cite{MR4213072}*{Section~6}). It is an obstruction to the knot being \emph{freely slice}.
There are examples of free knots with nontrivial $L$-invariant. Figure~\ref{fig:fk_free} shows two flat knots with 5 crossings that are nontrivial as free knots. In fact, one can compute that $L(5.19)=L(5.36)=4$, and this implies that neither of them is freely slice.

\begin{figure}[ht]
   \centering
   \includegraphics[scale=1.0]{./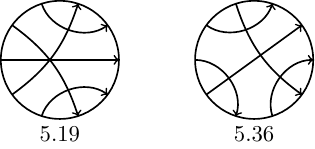}
   \caption{Flat knots that are not freely trivial}
   \label{fig:fk_free}
\end{figure}

\begin{definition}
A flat Gauss diagram is said to be of \emph{alternating pattern} if its underlying {\tt OU}-word is ``{\tt OUOU $\cdots$ OU}''.
\end{definition}
Note that this definition is adapted from the definition of alternating pattern for virtual knot in \cite{karimi18}.

\begin{lemma}[\cite{karimi18}]
    Every flat knot with alternating pattern is almost classical.
   \label{lem:karimi18}
\end{lemma}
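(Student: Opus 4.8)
The plan is to work directly on the flat Gauss diagram. Recall from Definition~\ref{def:acflat} that a flat knot is almost classical if and only if it admits a flat knot diagram in which every arrow has index $n(e)=0$, where $n(e)$ counts arrow tails minus arrow heads in the open arc $e^+$ running from the tail of $e$ to its head. So it suffices to show that for a flat knot given by a Gauss diagram $D$ whose underlying {\tt OU}-word is ${\tt OUOU}\cdots{\tt OU}$, every arrow of $D$ has index $0$. First I would set up notation: label the $2n$ marked points on the skeleton $1,2,\dots,2n$ in counterclockwise order starting at 12 o'clock, so that by the alternating hypothesis the odd-numbered points are all of one type ({\tt O} or {\tt U}, say all tails) and the even-numbered points are all of the other type (all heads); this is the key combinatorial consequence of the alternating pattern.

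The heart of the argument is a parity/counting observation. Fix an arrow $e$ with tail at point $t$ and head at point $h$; the arc $e^+$ consists of the marked points strictly between $t$ and $h$ along the skeleton (in the counterclockwise direction from $t$ to $h$). I would count how many marked points lie in that arc: since points strictly alternate tail, head, tail, head, $\dots$ and the endpoints $t$ (a tail) and $h$ (a head) have opposite types, the number of interior points in $e^+$ is even, and moreover these interior points themselves alternate starting with a head (the point immediately after the tail $t$) and ending with a tail (the point immediately before the head $h$). Hence among the points of $e^+$ there are exactly as many heads as tails, which gives $n(e) = (\#\text{tails in }e^+) - (\#\text{heads in }e^+) = 0$. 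Running this over all arrows $e\in\arr(D)$ shows every index vanishes, so $D$ is an almost classical diagram and the flat knot it represents is almost classical.

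I should be slightly careful about one point that I expect to be the main (minor) obstacle: the indexing conventions. The definition of $e^+$ and of $n(e)$, and the convention about which of {\tt O}/{\tt U} is the tail versus the head, must be tracked consistently so that the ``opposite type at the two endpoints'' claim is correct; a sign or off-by-one slip here would be the only real danger. It is worth checking the argument against the worked example in the text (Figure~\ref{fig:gauss5}, the flat virtual trefoil with Gauss code {\tt O1U2U1O2}, which has alternating {\tt OU}-word {\tt OUOU}) to confirm that all arrow indices are indeed $0$. Once the conventions are pinned down, the counting argument is immediate, and no appeal to the full machinery of \cite{karimi18} beyond this elementary parity count is needed; in particular the statement follows purely diagrammatically without passing through the surface interpretation, though one could equally phrase the conclusion as saying the alternating-pattern diagram realizes a homologically trivial curve, matching the characterization of almost classicality recorded earlier.
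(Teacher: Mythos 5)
Your parity count is exactly the argument the paper has in mind: the lemma itself is cited to Karimi's thesis, but the paper's own use of it (in the proof of Lemma~\ref{thm:acalt}) records precisely this reasoning --- an alternating-pattern Gauss diagram has only chords of index zero, hence is almost classical --- and your proof correctly fills in that count, since the endpoints of each chord carry opposite letters and the letters alternate around the skeleton, forcing equal numbers of heads and tails in $e^+$. One correction to your proposed sanity check: the flat virtual trefoil with Gauss code \texttt{O1U2U1O2} has {\tt OU}-word \texttt{OUUO}, not \texttt{OUOU}; its chords have indices $\pm 1$ and it is \emph{not} almost classical, so a genuinely alternating code such as \texttt{O1U2O3U1O2U3} (all indices zero) is the right test case, though this slip does not affect the validity of your main argument.
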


\begin{lemma}
Any flat knot free-equivalent to a checkerboard colorable flat knot is 
also checkerboard colorable.
   \label{lem:ccfr}
\end{lemma}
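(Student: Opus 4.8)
The plan is to show that checkerboard colorability is preserved under each of the two free-equivalence moves shown in Figure~\ref{fig:freemove}, and then to conclude the lemma by induction on the length of a sequence of moves realizing the free-equivalence. Since checkerboard colorability of a flat knot means that \emph{some} diagram for it has all crossings of even index (Definition~\ref{def:acflat}), the natural strategy is: start from a checkerboard colorable diagram $D$ of one knot, and track what happens to it under a free move. The first free move (the ``detour''-type move on the Gauss diagram) does not change the underlying flat knot diagram at all in the relevant sense — it only reroutes an arrow through virtual crossings — so it trivially preserves the property. The substantive content is in the second free move, which, on the Gauss diagram, inserts or deletes a single arrow whose endpoints are cyclically adjacent (an FR1-type arrow), but possibly with the ``wrong'' arrowhead orientation so that it is not a genuine flat Reidemeister~1 move.

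Here is how I would carry out the main step. Suppose $\beta$ is checkerboard colorable and $\alpha$ is obtained from $\beta$ by one free move; I want to produce a checkerboard colorable diagram for $\alpha$. Take a diagram $D_\beta$ of $\beta$ with all arrow indices even, realized as a Gauss diagram. The free move relates $D_\beta$ to a Gauss diagram $D_\alpha$ of $\alpha$ by adding (or removing) an isolated, cyclically-adjacent arrow $e_0$. First note that adding such an arrow $e_0$ changes the index of every other arrow $e$ by $0$: since the two feet of $e_0$ are adjacent on the skeleton, no arc $e^+$ can contain exactly one foot of $e_0$, so $e^+$ either contains both feet of $e_0$ (contributing a head and a tail, net change $0$) or neither (net change $0$). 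Hence all the old arrows retain even index. The only possible obstruction is the index of the new arrow $e_0$ itself, which is $n(e_0) = 0$ because $e_0^+$ is the short empty arc between its two adjacent feet — so $n(e_0) = 0$, which is even. Therefore $D_\alpha$ is again checkerboard colorable. The reverse direction (removing $e_0$) is immediate from the same computation, since deleting an arrow of index $0$ does not alter any other index.

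I should double-check the one genuinely delicate point: in Definition~\ref{def:acflat}, checkerboard colorability of a \emph{diagram} requires every crossing to have even index, and the paper has noted (following \cite{boden_acknot}) that minimal diagrams inherit the property, but here I am not assuming $D_\beta$ is minimal — I only need the weaker statement that \emph{some} diagram of $\alpha$ is checkerboard colorable, which is exactly what the argument produces. I also need to make sure the free move as drawn in Figure~\ref{fig:freemove} really is ``insert/delete a cyclically-adjacent arrow'' on the Gauss diagram; if instead it allows the inserted arrow to be linked with others, the index bookkeeping above fails. Reading the figure, the second free move is the one where an arrow with both endpoints adjacent and no other feet between them is created or destroyed — this is the ``unknotting an FR1 kink with arbitrary arrow direction'' move — so the computation applies verbatim.

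The main obstacle, then, is not any deep topology but getting the index bookkeeping exactly right and confirming the precise form of the second free move from Figure~\ref{fig:freemove}; once the two moves are correctly identified, each is handled by the elementary index computations above, and the lemma follows by chaining finitely many such moves. I would also remark that the same argument shows the analogous statement for mod~$p$ Alexander numberability and for almost classicality, since in all cases the relevant indices are unchanged (mod~$p$ or exactly), though only the checkerboard ($p=2$) case is needed here.
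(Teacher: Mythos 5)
There is a genuine gap, and it is the one you yourself flagged as the delicate point: the identification of the free move in Figure~\ref{fig:freemove}. The free move is \emph{not} the insertion or deletion of an isolated arrow with cyclically adjacent feet (that would just be an FR1-type move on the Gauss diagram, and defining free equivalence by it would add nothing to flat equivalence, making the notion of free knots vacuous). Free knots are flat knots modulo forgetting the \emph{direction} of the arrows: on the Gauss diagram the free move reverses the head and tail of a single arrow $e$ (the two pictures in Figure~\ref{fig:freemove} are this one move shown on a flat knot diagram and on a flat Gauss diagram, not two different moves, so your dismissal of the ``first'' move as a detour through virtual crossings also does not apply). Consequently your index bookkeeping analyzes the wrong move. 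Under the actual move the indices are \emph{not} all unchanged: if $f\neq e$ and the arc $f^+$ contains exactly one endpoint of $e$, then reversing $e$ turns a head into a tail (or vice versa) inside $f^+$, so $n(f)$ changes by $\pm 2$; and for $e$ itself, $e^+$ is replaced by the complementary arc, so $n(e)$ changes sign. The content of the lemma is precisely that these changes preserve parity mod $2$, so that a diagram with all arrows of even index stays so after the move; this is the argument the paper gives, and it is absent from your proposal. Your computation (all indices unchanged, new arrow of index $0$) proves only that FR1 kinks respect evenness, which does not address free equivalence.

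Your overall scaffolding is fine and matches the intended proof — induct on the number of moves, and verify that ``all arrow indices even'' (hence checkerboard colorability of the flat knot in the sense of Definition~\ref{def:acflat}) is preserved move by move — so the fix is local: replace the kink analysis by the case analysis for arrow reversal above (both endpoints of $e$ on $f^+$, neither, or exactly one, plus the case $e=f$), and conclude that parity of every arrow is preserved, exactly as in the paper's proof of Lemma~\ref{lem:ccfr}. Your closing remark would also need revising: the same computation does show that evenness mod $2$ is preserved, but since indices can change by $\pm 2$ and flip sign, the verbatim statement for mod $p$ Alexander numberability with $p\neq 2$ (or for almost classicality, $p=0$) does not follow from it.
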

\begin{proof}
    We consider the effect on the arrow index $n(f)$ of applying a free-equivalence to an arrow $e$. Suppose first that $e\neq f$. 
  Let $f^+$ be the arc on Gauss diagram from the tail to the head of the arrow $f$.
    If $f^+$ contains both the arrow tail and head of $e$, then the index $n(f)$ does not change.
    The same is true if neither the head nor the tail of $e$ is contained on $f^+$. 
    If $f^+$ contains one of them (e.g. the arrow head of $e$ but not the tail, or vice versa), then the index $n(f)$  changes by $\pm 2$.
    
    In the case $e=f$, it is easy to verify that the index $n(e)$ changes by sign.
    Therefore, the parity (even/odd) of all the arrows is preserved under free-equivalence, and whether a chord is even or odd is well-defined for free knots. In particular, 
    if two flat knot diagrams $D_1,D_2$ are free-equivalent and if all the arrows of $D_1$ are even (i.e.,  if $n(e)\equiv 0 \mod 2$ for all $e\in \arr(D_1)$), then the same must be true for $D_2.$
\end{proof}

\begin{lemma}
    A flat knot is almost classical if and only if it has a diagram (not necessarily minimal) of alternating pattern.
   \label{thm:acalt}
\end{lemma}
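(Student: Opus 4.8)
The statement splits into two implications, and the ``if'' direction is exactly Lemma~\ref{lem:karimi18}: any flat knot admitting a diagram of alternating pattern is almost classical. So the work is in the ``only if'' direction --- producing, from an almost classical flat knot $\alpha$, a (not necessarily minimal) diagram of alternating pattern. The argument parallels the virtual-knot case.

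For the setup, fix a diagram $D$ of $\alpha$ in which every arrow has index $0$; such a $D$ exists by Definition~\ref{def:acflat}, and any minimal crossing diagram of $\alpha$ will do. Going counterclockwise from $12$ o'clock, label the $2n$ arcs of the Gauss diagram $\sigma_1,\dots,\sigma_{2n}$ and define a level function $f$ on them by the rule that $f$ increases by $1$ on passing an arrow tail and decreases by $1$ on passing an arrow head; this is well defined cyclically since $D$ has equally many tails and heads, and when all arrow indices vanish it coincides with an Alexander numbering (equivalently, with the labelling of the complementary regions of $\omega_\alpha(S^1)$ in the Carter surface of Definition~\ref{def:fkim}). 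It is immediate from the definition of $f$ that an arc is flanked by two arrow tails exactly when $f$ is strictly increasing through it, and by two arrow heads exactly when $f$ is strictly decreasing through it. Hence $D$ contains no {\tt OO} and no {\tt UU} --- that is, $D$ is of alternating pattern --- if and only if every $\sigma_i$ is a strict local extremum of $f$; and reading this condition cyclically around the $2n$ arcs forces $f$ to take exactly two values, differing by $1$. So it suffices to transform $D$, by flat Reidemeister moves, into a diagram whose level function has range $\{0,1\}$ (after a global shift).

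The plan is a downward induction on the spread $\max f - \min f$. If the spread is at least $2$, the arcs realizing the maximum $m$ are isolated ``peaks'', each flanked by two arcs of level $m-1$; I would show that a peak can be eliminated by flat Reidemeister moves supported near it --- a ``finger'' pair of FR2 moves pushing the peak arc down across a neighbouring strand, possibly preceded by FR3 moves --- in such a way that a suitable complexity, say the number of peak arcs, strictly decreases while no arc of level exceeding $m$ is ever created. Iterating lowers the maximum to $m-1$; applying the mirror operation to the minimum completes the induction. The bookkeeping is cleanest in the immersion model, where $f$ is the integer region labelling and the FR2 finger move becomes an isotopy of the immersed curve supported near a peak region (possibly together with a stabilization, which does not change $\alpha$).

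The main obstacle is precisely this peak-removal step: writing down an explicit sequence of flat Reidemeister moves that lowers a peak, verifying that it does not raise levels elsewhere, and showing the process terminates. One subtlety is a parity constraint --- an FR1 curl inserts equally many arrow tails and arrow heads along the skeleton, whereas removing an isolated {\tt OO} or {\tt UU} by a local insertion requires an imbalance of one --- so curls alone cannot suffice and FR2 moves are genuinely needed; and since FR2 moves can change arrow indices, one must choose them so as to preserve the Alexander numbering throughout. Arranging all of this so that the chosen complexity strictly drops at each stage is the crux of the proof.
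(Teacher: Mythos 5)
Your ``if'' direction is fine and matches the paper: it is exactly Lemma~\ref{lem:karimi18}. The problem is the ``only if'' direction, which in your write-up is a plan rather than a proof. The entire content of the implication is concentrated in the peak-removal step --- producing an explicit sequence of flat Reidemeister moves that lowers a maximal level of your function $f$ without raising levels elsewhere, preserves the vanishing of all arrow indices, and strictly decreases some complexity --- and you explicitly leave this unproved, even flagging it yourself as ``the crux.'' As it stands there is no argument that such a move sequence exists (FR2 finger moves do change arrow indices in general, and nothing in the proposal pins down a choice for which the Alexander numbering and the bound on the maximum are both preserved while the number of peaks drops), nor that the process terminates. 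So the proposal has a genuine gap: the key construction is missing, and the reduction of ``alternating pattern'' to ``$f$ has spread $1$,'' while correct, does not by itself advance the proof.

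For comparison, the paper avoids diagram-by-diagram Reidemeister bookkeeping entirely and argues geometrically: an almost classical flat knot is represented by a null-homologous immersed loop in a Carter surface, hence bounds an immersed oriented surface $F$ there. At the cost of increasing the crossing number, $F$ is isotoped into disk-with-bands form with its positive side up, so that all crossings of the boundary curve arise in quadruples where one band crosses another; reading the resulting Gauss word shows the crossings alternate, i.e.\ the diagram has \texttt{OU}-pattern \texttt{OUOU}$\cdots$\texttt{OU}. If you want to salvage your approach, the honest route is essentially to re-derive this surface picture (your level function is the dual description of the spanning surface), at which point the disk-and-bands normal form does the work that your unproven peak-removal step was supposed to do.
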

\begin{proof}
    An almost classical flat knot represents an immersed loop bounding an immersed oriented surface $F$ in a Carter surface. 
    At the cost of increasing the crossing number, we can turn $F$ to a disk attached by finitely many bands,
    where crossings only occur in quadruples when a band crosses another band. In this way, $F$ has only one side facing to the positive side of the Carter surface.
    By this construction, the crossings are alternating.
    Conversely, by Lemma~\ref{lem:karimi18},  a  Gauss diagram
    with alternating pattern ``{\tt OUOU $\cdots$ OU}'' has only chords of index zero and hence is almost classical.
\end{proof}

The alternating pattern cannot always be chosen to have minimal crossing number.
For example, the almost classical flat knots \rm{ac8.16} (also called \rm{8.1240457})
and \rm{ac10.1088}  in Figure~\ref{fig:fkac101097} 
do not have minimal crossing alternating pattern diagrams.
\begin{figure}[ht]
   \centering
   \includegraphics[scale=1.2]{./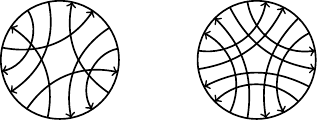}
      \caption{Almost classical flat knots 
          \rm{ac8.16}
      and \rm{ac10.1088}}
   \label{fig:fkac101097}
\end{figure}
\begin{lemma}
    A flat knot is checkerboard colorable 
    if and only if it has a Gauss diagram that is free-equivalent to a diagram of alternating pattern.
   \label{lem:ccalt}
\end{lemma}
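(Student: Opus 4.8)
The strategy is to combine the characterization of almost classical flat knots via alternating pattern diagrams (Lemma~\ref{thm:acalt}) with the stability of checkerboard colorability under free-equivalence (Lemma~\ref{lem:ccfr}) and the parity-based description of checkerboard colorability (Definition~\ref{def:acflat}: a diagram is checkerboard colorable iff all arrow indices are even).

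\smallskip\noindent\emph{The ``if'' direction.} Suppose a flat knot $\alpha$ has a Gauss diagram $D$ that is free-equivalent to a diagram $D'$ of alternating pattern. By Lemma~\ref{thm:acalt} (or directly by Lemma~\ref{lem:karimi18}), $D'$ is almost classical, so in particular all of its arrows have index $n(e)\equiv 0\bmod 2$; that is, $D'$ is checkerboard colorable. Since checkerboard colorability is preserved under free-equivalence by Lemma~\ref{lem:ccfr}, it follows that $D$ — and hence $\alpha$ — is checkerboard colorable.

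\smallskip\noindent\emph{The ``only if'' direction.} This is the substantive direction, and I expect it to be the main obstacle. Suppose $\alpha$ is checkerboard colorable, so it admits a diagram $D$ with all arrows even. I would mimic the proof of Lemma~\ref{thm:acalt}: realize $\alpha$ as an immersed loop on a Carter surface that is $\Z/2$-homologically trivial (this is the content of the discussion following Definition~\ref{def:acflat}), so it bounds a (possibly non-orientable, or one-sided) subsurface $F$. As in the proof of Lemma~\ref{thm:acalt}, push $F$ to a disk-with-bands form, increasing the crossing number if necessary so that crossings occur only in quadruples where bands cross. The key point is that the resulting diagram, while not necessarily of alternating pattern on the nose, should differ from an alternating-pattern diagram only by free-equivalence moves: the failure of the $\Z$-orientability of $F$ (which is what obstructs getting a genuine alternating pattern, i.e.\ genuine almost-classicality) manifests only in how the local orientations along the skeleton switch, and these can be corrected modulo the free-equivalence relation of Figure~\ref{fig:freemove}. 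Concretely, I would argue that any diagram all of whose arrows are even can be brought, by free-equivalence moves (which preserve the parity of every arrow index by Lemma~\ref{lem:ccfr}'s analysis), to one in which consecutive skeleton points alternate {\tt O}, {\tt U}, {\tt O}, {\tt U}, $\ldots$ — the alternating pattern.

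\smallskip The delicate step is the last one: verifying that evenness of all arrow indices is exactly the obstruction-free condition allowing one to reshuffle the {\tt OU}-word into alternating form using only free-equivalence moves. I would handle this by an explicit combinatorial induction on the number of ``defects'' (adjacent pairs of equal letters {\tt OO} or {\tt UU} in the cyclic {\tt OU}-word), showing that each free-equivalence move applied at a suitable arrow decreases the number of defects while respecting parity, and that when all arrows are even one never gets stuck. An alternative, cleaner route is to invoke Lemma~\ref{thm:acalt} applied to a parity cover or doubling construction, but the direct combinatorial argument is likely the most self-contained.
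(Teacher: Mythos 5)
Your ``if'' direction is fine and complete: an alternating-pattern diagram has all arrow indices zero (Lemma~\ref{lem:karimi18}, as used in Lemma~\ref{thm:acalt}), hence is checkerboard colorable, and Lemma~\ref{lem:ccfr} transports checkerboard colorability across free-equivalence. The gap is in the ``only if'' direction, where you leave exactly the substantive step unproved. Your first route, imitating the proof of Lemma~\ref{thm:acalt} with a $\Z/2$-null-homologous representative, does not go through as described: a mod~$2$ bounding subsurface may be non-orientable (one-sided), and the disk-with-bands argument of Lemma~\ref{thm:acalt} uses orientability --- the surface keeps one side facing the positive side of the Carter surface --- to force the alternating quadruples at band crossings; you assert that the failure ``manifests only in how local orientations switch'' and ``can be corrected modulo free-equivalence'' without any argument. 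Your second route, induction on the number of adjacent \texttt{OO}/\texttt{UU} pairs, is likewise only a plan: a free move merely swaps the letters at the two endpoints of one chord, you do not show that a defect-decreasing move can always be found, and you explicitly flag the key claim (``evenness of all indices is exactly the obstruction to reaching alternating form by free moves'') as the delicate step without verifying it. As written, the forward implication is not proved.

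For comparison, the missing step is in fact true and short, by a mechanism different from both of your plans and from the paper's. Since $n(e)\equiv \#\{\text{endpoints of other chords in } e^+\} \pmod 2$, evenness of every index says each chord has an even number of marked points strictly inside its arc, i.e.\ each chord joins an odd-numbered and an even-numbered point of the skeleton; reversing exactly those arrows whose tail (\texttt{O}) sits at an even point --- each reversal being a single free move --- places \texttt{O} at every odd point and \texttt{U} at every even point, which is literally the pattern \texttt{OUOU}$\cdots$\texttt{OU}, with no Reidemeister moves and no induction needed. The paper argues differently again: a checkerboard colorable flat diagram lifts to a checkerboard colorable virtual diagram, which by Kamada \cite{kamada02} can be made alternating by crossing changes (these do not affect the flat projection), and flattening and then applying free moves at the negative crossings produces an alternating-pattern flat diagram. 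Either of these arguments would close your gap; your proposal, as it stands, does not.
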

\begin{proof}
    A flat checkerboard colorable diagram lifts to checkerboard colorable virtual diagrams.
    By \cite{kamada02}*{Lemma~7}, a checkerboard colorable virtual diagram can be made alternating by crossing changes, which does not change the flat knot type it projects to.
    Therefore, for every flat checkerboard colorable diagram, there exists an alternating virtual diagram $D$ projecting to it. Apply free-moves at all negative crossings of flat diagram $\pi(D)$, we obtain a flat diagram of alternating pattern.
\end{proof}

\begin{lemma}
    If the flat knot $\alpha$ is almost classical, then $A(\alpha)=1$.
    \label{lem:acarrow}
\end{lemma}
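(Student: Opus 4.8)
The plan is to use Lemma~\ref{thm:acalt}, which tells us that an almost classical flat knot admits a diagram $D$ of alternating pattern, i.e.\ with underlying {\tt OU}-word ${\tt OUOU}\cdots{\tt OU}$. Since the flat arrow polynomial is an invariant (Theorem~\ref{thm:arrowpolyinv}), it suffices to compute $A(D)$ for such a diagram and show it equals $1$. So first I would pass to an alternating-pattern diagram and set up the $2^n$ states of the skein expansion.

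The key observation is that in an alternating-pattern diagram, the orientations interact with the smoothings in a controlled way. Going around the skeleton we alternately read off heads and tails, so at every flat crossing the two strands are ``parallel'' in a fixed sense — I would check that the oriented smoothing at each crossing is the one compatible with travelling along the skeleton, and that the disoriented smoothing always produces cusps that pair up and cancel. Concretely, I expect that for an alternating-pattern diagram, \emph{every} state reduces (after cusp propagation, reduction, and combination via the rules in Figure~\ref{fig:arrow2}) to a collection of loops each carrying total cusp multiplicity $0$, so that $\langle S\rangle = 1$ for all $S$. Then $A(D) = \sum_S (-2)^{|S|-1}$, and one computes this purely combinatorially: the number of states with a given number of loops is governed by the structure of the all-oriented/all-disoriented smoothings of a descending-type diagram, and the alternating sum telescopes to $1$. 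Equivalently — and this is probably the cleanest route — since $\bar A(\alpha)|_{K_i=1}=1$ by \eqref{eqn:flatarroweqn} for \emph{any} flat knot, it is enough to prove that $\bar A(\alpha)$ has \emph{no} nonconstant terms, i.e.\ that no state contributes a loop with nonzero cusp multiplicity; combined with the normalization one gets $\bar A(\alpha)=1$, and hence $A(\alpha)=1$ after accounting for the sign $(-1)^{cr(D)}$. I should be careful to track that sign: with $D$ the alternating-pattern diagram, $A(D)=(-1)^{cr(D)}\bar A(\alpha)=(-1)^{cr(D)}$, which is consistent since $A$ as defined in Definition~\ref{def:arrowpoly} carries the unnormalized sign and the statement of the lemma is about $A(\alpha)$, by which the paper means $\bar A(\alpha)$.

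An alternative, possibly slicker, argument: an almost classical flat knot lifts to an \emph{almost classical virtual knot} (a null-homologous curve in $\Sigma$), and for such virtual knots there is a standard fact that the arrow polynomial has all $K_i$-degree zero — this is essentially because the cusp multiplicities are homological intersection numbers of the state loops with the knot, which vanish in the null-homologous case. Then setting $a=1$ gives $\bar A(\alpha)=f(L)|_{a=1}$, and since the arrow polynomial of an almost classical virtual knot already lies in $\Z[a^{\pm1}]$, substituting $a=1$ gives the value $1$ (because the Jones polynomial at $a=1$ is $1$). I would likely present whichever of these two routes the paper's conventions make shortest, but flag the first (purely diagrammatic, via Lemma~\ref{thm:acalt}) as the self-contained one.

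The main obstacle will be the bookkeeping in the diagrammatic argument: verifying carefully that in every one of the $2^n$ states of an alternating-pattern diagram the cusps cancel in pairs, i.e.\ that the total cusp multiplicity on each loop is not merely even (which holds for all flat diagrams) but actually zero. This amounts to showing that the signed count of cusps a given state-loop inherits is the homological self-/linking data of a null-homologous curve and therefore vanishes — making precise the "homological intersection number" interpretation of cusp multiplicity is the real content, and I expect this to be where the almost classicality hypothesis is genuinely used.
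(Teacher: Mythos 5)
Your ``alternative, possibly slicker'' route is in fact the paper's proof: one lifts $\alpha$ to an almost classical (null-homologous) virtual knot $K$ with $\pi(K)=\alpha$ --- the paper does this by pushing an immersed Seifert surface for $\omega_\alpha$ off itself to an embedded surface in $\Sigma\times I$, though one can also just lift any index-zero flat diagram with arbitrary over/under choices --- and then invokes \cite{miller22}*{Theorem~3.21}, which says the arrow polynomial of an almost classical virtual knot equals its Jones polynomial (no $K_i$ variables), so that setting $a=1$ and using \eqref{eqn:flatarroweqn} gives $\bar A(\alpha)=1$. Your handling of the normalization ($A(\alpha)$ in the statement meaning $\bar A(\alpha)$) is also consistent with what the paper does. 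So if you present that route, you have essentially the paper's argument, with the ``standard fact'' you gesture at being precisely Miller's theorem, which should be cited rather than treated as folklore.

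The route you flag as primary and ``self-contained,'' however, has a genuine gap exactly where you suspect it. Passing to an alternating-pattern diagram via Lemma~\ref{thm:acalt} is fine, but the claim that \emph{every} state of such a diagram reduces so that each loop has total cusp multiplicity zero is not established by the {\tt OU}-word combinatorics: the alternating pattern controls the indices of the chords (hence null-homology of the diagram on its Carter surface), not directly the cusp count of an individual state loop, and nothing in Figure~\ref{fig:arrow2} forces per-loop cancellation. Note also that per-state vanishing is a strictly stronger statement than $\bar A(\alpha)=1$, since a priori $K_i$-terms could cancel across states against the $(-2)^{|S|-1}$ weights. The only way I know to justify the per-loop claim is through the homological interpretation of cusp multiplicities of state loops, which is the content of \cite{miller22}; so the ``self-contained'' route secretly requires the same external input as the short route, and as written it is not a proof. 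Either present the lift-plus-Miller argument (as the paper does), or supply an actual proof of the cusp-vanishing claim rather than the expectation of one.
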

\begin{proof}
    We know $\alpha$ can be represented as the boundary of an immersed Seifert surface
    $ F\looparrowright \Sigma$ and we can alter the singular points so that we get an embedding  $F\rightarrow \Sigma\times I$, whose boundary represents a virtual knot, say $K$.
    By this construction, we have $K$ is also almost classical (also called null-homologous) $\pi(K)=\alpha$, where $\pi$ is the shadow projection.
By \cite{miller22}*{Theorem~3.21}, if the virtual knot $K$ is almost classical then its  arrow polynomial
is  the same as its Jones polynomial. 
Then by Equation \eqref{eqn:flatarroweqn}, $\bar A(\alpha)=1$.
\end{proof}

We use $C(D)$ to denote the constant term of $A(D)$ for a flat knot diagram $D$, 
and we use $\bar{C}(\alpha)$ for the constant term of $\bar{A}(\alpha)$ for a flat knot $\alpha$.
If $D$ is a diagram with $n$ crossings representing $\alpha$, then these are related by $\bar{C}(\alpha) = (-1)^n C(D)$.

\begin{theorem}
    If the flat knot $\alpha$ is checkerboard colorable, then $C(\alpha)\equiv 1 \mod 2$.
    \label{thm:oddconst}
\end{theorem}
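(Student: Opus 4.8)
The plan is to turn the statement into a mod‑$2$ count of states in the defining state sum of the flat arrow polynomial, and then to exploit the normalization \eqref{eqn:flatarroweqn} together with the checkerboard coloring. Fix a checkerboard colorable diagram $D$ for $\alpha$ with $n=cr(D)$ flat crossings; if $n=0$ the claim is trivial, so assume $n\ge 1$. By definition $C(D)=\sum_{S}(-2)^{|S|-1}$, where the sum runs over the states $S$ whose evaluation $\langle S\rangle$ is the empty monomial $1$, that is, the states all of whose loops carry total cusp multiplicity $0$. Since $(-2)^{|S|-1}$ is odd exactly when $|S|=1$, reducing mod $2$ gives
\[ C(D)\ \equiv\ \#\{\,S:\ |S|=1\ \text{and the unique loop of }S\text{ has total cusp multiplicity }0\,\}\ \ (\mathrm{mod}\ 2). \]
Because $\bar C(\alpha)=(-1)^nC(D)$, proving $C(\alpha)\equiv 1\ (\mathrm{mod}\ 2)$ amounts to showing this count of single‑loop states is odd.

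Next I would count single‑loop states in a second way. Setting every $K_i=1$ collapses each $\langle S\rangle$ to $1$, so $\sum_S(-2)^{|S|-1}=A(D)\big|_{K_i=1}=(-1)^n\,\bar A(\alpha)\big|_{K_i=1}=(-1)^n$ by \eqref{eqn:flatarroweqn}; reducing mod $2$, the total number of single‑loop states of $D$ is odd. Write this total as $M+M'$, where $M$ (resp. $M'$) is the number of single‑loop states whose loop has zero (resp. nonzero) total cusp multiplicity. Then $M\equiv 1-M'\ (\mathrm{mod}\ 2)$, so the theorem is equivalent to the assertion that $M'$ is \emph{even} for every checkerboard colorable diagram $D$.

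To prove $M'$ even I would construct a fixed‑point‑free involution on the set of single‑loop states with nonzero total cusp multiplicity, built from the checkerboard coloring of $D$. Each cusp of a state carries a $\pm1$ multiplicity and the total cusp multiplicity of a loop is the signed sum of these (which forces it to be even); at each crossing where the state performs a disoriented smoothing, the checkerboard coloring records the color of the pair of regions separated by the two resulting cusps. The involution $S\mapsto S^{\dagger}$ should toggle the smoothing at the crossings of one fixed color, and one must verify --- and this is where checkerboard colorability, equivalently $\Z/2$‑nullhomology of $\omega_\alpha(S^1)$ in the Carter surface, genuinely enters --- that such a toggle keeps the state connected (a single loop) while reversing the sign of its total cusp multiplicity. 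Since that multiplicity is nonzero, $S^{\dagger}\ne S$, so the set is partitioned into pairs and $M'$ is even, whence $C(\alpha)\equiv C(D)\equiv 1\ (\mathrm{mod}\ 2)$.

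The hard part is exactly this last verification: pinning down the correct colored toggle and showing it simultaneously preserves the number of loops and negates the total cusp multiplicity (for a non‑checkerboard‑colorable knot the conclusion that $M'$ is even must fail, so the coloring cannot be dispensed with). An alternative entry point is Lemma~\ref{lem:ccalt}, which places $\alpha$ in the free‑equivalence class of an alternating‑pattern --- hence, by Lemma~\ref{lem:karimi18}, almost classical --- flat knot; by Lemma~\ref{lem:acarrow} that knot has trivial flat arrow polynomial, in particular constant term $1$, and it would then suffice to show that a single arrow‑reversal free move changes the constant term of the flat arrow polynomial only by an even integer. Tracking the constant term through the skein expansion at the reversed crossing appears, however, to be no easier than establishing the involution above, so I would treat the state‑sum argument as the main line.
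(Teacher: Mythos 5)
Your opening reductions are correct and clean: modulo $2$, the constant term $C(D)=\sum_{\langle S\rangle=1}(-2)^{|S|-1}$ only sees single-loop states whose loop has total cusp multiplicity $0$, and setting $K_i=1$ together with Equation~\eqref{eqn:flatarroweqn} shows the total number of single-loop states is odd. So the theorem is indeed equivalent to the claim that $M'$, the number of single-loop states with nonzero total cusp multiplicity, is even for a checkerboard colorable diagram. But the proof stops exactly there. The fixed-point-free involution that is supposed to establish this is never defined beyond ``toggle the smoothing at the crossings of one fixed color,'' and neither of its two required properties --- that the toggled state is still a single loop, and that its total cusp multiplicity is negated --- is verified or even made plausible. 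Toggling the smoothings at an arbitrary subset of crossings generically changes the number of loops, so there is no reason a priori that such a map restricts to an involution of single-loop states at all; you acknowledge this is ``the hard part,'' which means the step where checkerboard colorability actually enters is missing. As written, this is a reformulation of the theorem, not a proof of it.

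The ``alternative entry point'' you mention in passing is essentially the paper's actual argument: by Lemmas~\ref{thm:acalt}, \ref{lem:ccalt} and \ref{lem:acarrow}, a checkerboard colorable flat knot is obtained from an almost classical (alternating-pattern) diagram, whose arrow polynomial is trivial and hence has constant term $\pm1$, by a sequence of free moves; one then tracks the constant term through a single free move. The paper controls that change not by a bare skein computation but by importing an external constraint, namely \cite{miller22}*{Theorem~3.31}, which forces single-loop states of checkerboard colorable diagrams to contribute only monomials $K_{4n}$, and this is what rules out an odd jump in the constant term. So the route you set aside as ``no easier'' is the one that works, precisely because of that input you did not have; your main line would need the involution to be constructed and verified before it could be accepted.
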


\begin{proof}
    By Lemma~\ref{thm:acalt}, Lemma~\ref{lem:ccalt} and Lemma~\ref{lem:acarrow}, $\alpha$ is obtained from some almost classical Gauss diagram $D$ with $C(D)=\pm 1$ by free-move or (arrow-change in Gauss diagram).
    Observe that when we apply one free-move in Figure~\ref{fig:freemove},
the state resolution described in Figure~\ref{fig:skein}
does not change except that the cusps in (1) are in opposite directions.
If the state $S$ has more than two loops, then $(-1)^{n} (-2)^{|S|-1} \langle S \rangle $
either has zero constant term  or $(-2)^{|S|-1}$.
If the state $S$ has only one loop, then the two cusps  are in the same loop,
then either the number of cusps after deduction is either changed by at most $\pm 4$.
However, by \cite{miller22}*{Theorem~3.31}, the cusps number before
and after a free-move  can be only be $ 8n $: since before and after the move 
the flat knot remains checkerboard colorable and thus they both lift to some 
checkerboard colorable virtual knots which has only $K_{4n}$ for single-loop states mapping 
to monomials in their  arrow polynomials.
Therefore, the constant term of   an checkerboard colorable knot remains an odd number.
\end{proof}

\begin{figure}[ht]
    \centering
   \includegraphics[scale=1.0]{./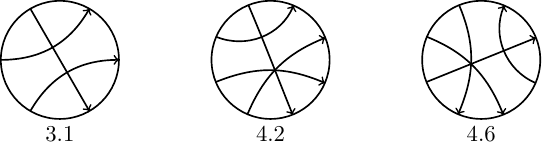}
    \caption{Three non-checkerboard-colorable flat knots}
    \label{fig:fk42}
\end{figure}
The flat knots shown in Figure~\ref{fig:fk42}
have
$\bar C(3.1)=0$,
$\bar C(4.2)=2$,
$\bar C(4.6)=2$.
Since these are all even, Theorem \ref{thm:oddconst} applies to show that these flat knots are not checkerboard colorable.

\begin{remark}
    For all flat knots up to $7$ crossings, one can check that $\bar{C}(\alpha)$ is odd whenever $\alpha$ is slice (refer to Definition~\ref{def:sliceflat}.
 for the definition of sliceness).
However, there exist flat knots $\beta$ that are slice such that $\bar{C}(\beta)$ is even. For example, the flat knot 8.11946 in Figure~\ref{fig:fk811946} is slice and has
$\bar A(8.11946)=12 K_1^3 + 4 K_1^2 K_2 + 4 K_1^2 K_3 - 12 K_1^2 + 4 K_1 K_2^2 - 20 K_1 K_2 - 4 K_1 K_3 - 4 K_2 K_3 + 6 K_2 + 4 K_3 + K_4 + 6$ and hence $\bar C(8.11946)=6$.
    \label{rmk:constant}
\end{remark}

\begin{figure}[ht]
   \centering
   \includegraphics[scale=1.1]{./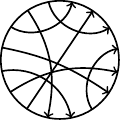}
   \;
   \;
   \;
   \;
   \;
   \includegraphics[scale=1.1]{./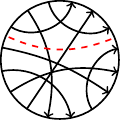}
   \caption{Flat knot 8.11946 and saddle move to slice it}
   \label{fig:fk811946}
\end{figure}
\subsection{Distinguishing flat knots}

Given a flat knot diagram $D$, let $D^n$ denote its $n$-strand cable. Thus $D^n$ is the flat link diagram with $n$ components obtained from taking the $n$-fold parallels at each virtual and flat crossing as in Figure \ref{fig:cable} and connecting them up without introducing any additional flat or virtual crossings. Given an orientation on $D$, we orient all the components of $D^n$ in the same direction. It is not difficult to show that the flat link type of $D^n$, as an oriented link, depends only on the oriented flat knot type of $D$. Given a flat knot $\alpha,$ we use $\alpha^n$ to denote its $n$-strand cable of $\alpha$, which  is well-defined and independent of the flat diagram used to represent $\alpha$.

\begin{figure}[ht]
   \centering
   \includegraphics[scale=1.1]{./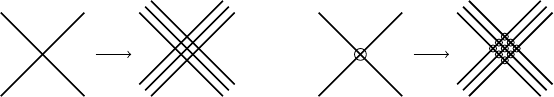}
   \caption{3-strand cable at flat and virtual crossings}
   \label{fig:cable}
\end{figure}

As with classical knots, link invariants of the cables $\alpha^n$ are invariants of the underlying flat knot $\alpha$. One can often obtain more powerful invariants at the expense of computability. Indeed, as we shall see, combining the flat arrow polynomial with cabling leads to stronger invariants that are more effective at distinguishing flat knots. In fact, using $2$-strand cabled arrow polynomials alone, we can distinguish flat knots up to 4 crossings completely.
Combined with the $\phi$-invariant, we can further distinguish flat knots completely up to 6 crossings and with just six pairs of 7-crossing knots not separated, see  Table~\ref{table:arr_dis}.\footnote{  When $cr(\alpha)\ge 7$, we only calculated $A(\alpha^2),C(\alpha^3)$ if they are not distinguished by other invariants.  Some calculation of $C(\alpha^3)$ of some $7$-crossing flat knots are not finished due to the workload of the calculation.}

\begin{table}[ht]
    \centering
\begin{tabular}{||c |c |c|c|  c| c||} 
 \hline
 Crossings &  \# Flat knots  &   $\phi$ & $A(\alpha)$ & $A(\alpha^2)$ & $A(\alpha^2)$, $C(\alpha^3)$  and $\phi$ \\ [0.5ex] 
 \hline\hline
3 & 1 & 0 & 0& 0 &0\\
 \hline
4 & 11 & 0& 10& 0 &0\\
 \hline
5 & 120 & 8&111& 2  &0\\
 \hline
6 & 2086 & 74 &1919& 10 &0\\
 \hline
 7 & 46233 & 1375& 42163&--- &12\\
 \hline
\end{tabular}
\vspace{2mm}
\caption{Number of non-distinguished flat knots using the invariant(s)}
\label{table:arr_dis}
\end{table} 

Specifically, there are two $4$-crossing flat knots, eight $5$-crossing flat knots, $106$ $6$-crossing flat knots, and $674$ $7$-crossing flat knots that are not distinguished from the unknot by $A(\alpha)$.

The first pair of flat knots that cannot be distinguished by the $2$-strand cabled arrow polynomial are 5.112 and 5.113. They appear in Figure \ref{fig:fk5112} and have
$$
    A(5.112^2)=A(5.112^2)  =-64K_1^4 + 144 K_1^2 K_2 - 80 K_1^2 - 56 K_2^2 + 54
$$

\begin{figure}[ht]
   \centering
   \includegraphics[scale=1.0]{./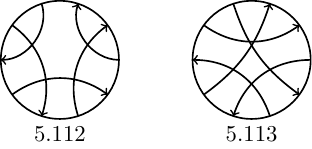}
   \caption{Flat knots with identical 2-strand cabled arrow polynomial.}
   \label{fig:fk5112}
\end{figure}

We know that the arrow polynomial and cabled arrow polynomials of any almost classical flat knot are trivial. To distinguish almost classical knots, we rely on the primitive based matrices.
However, there exist nontrivial flat knots with trivial primitive based matrices. 
\begin{example} \label{exa:ac8-19}
Consider the flat knot \rm{ac8.19} in Figure \ref{fig:fkac828}. It has trivial primitive based matrix. In fact, every invariant of flat knots studied up to now is trivial for the flat knot \rm{ac8.19}. (This flat knot is also called 8.1241248, and it 
%, {\tt O1O2U3U1O4O3U5U4O6O5U7U6O8O7U2U8},
is the $19$-th 8-crossing almost classical knot and the $1241248$-th 8-crossing flat knot. One can find the knot on \cite{flatknotinfo} by its name 8.1241248.) At this point, the monotonicity algorithm is the only way to deduce that \rm{ac8.19} is nontrivial as a flat knot. 
\end{example}

\begin{figure}[ht]
   \centering
   \includegraphics[scale=1.1]{./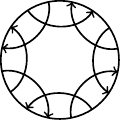}
   \caption[Almost classical flat knot \rm{ac8.19}]{Almost classical flat knot \rm{ac8.19} has trivial primitive based matrix.}
   \label{fig:fkac828}
\end{figure}

Figure~\ref{fig:ac10} shows further examples of almost classical flat knots with trivial primitive based matrices. They all have 10 crossings.  
With the invariants we have introduced so far, it is impossible to show these examples are nontrivial.
In the next section, we will introduce new invariants of flat knots which are powerful enough to show nontriviality for \rm{ac8.19} and the examples in Figure~\ref{fig:ac10}.

\begin{figure}[ht]
    \centering
    \includegraphics[angle=0,scale=1.0,trim=0 0 0 0]{./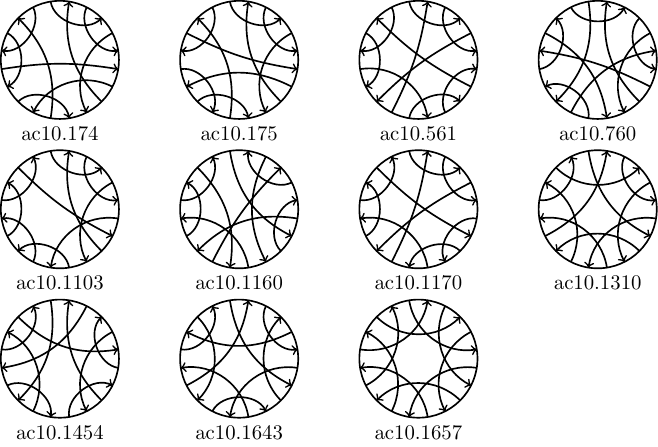}
\caption{Almost classical knots with trivial primitive based matrix}
    \label{fig:ac10}
\end{figure}

\medskip

We conclude this section with a few open problems.
\begin{problem}
    Is there a bigraded invariant that categorifies the flat arrow polynomial, cf.~\cite{category}? 
    \label{prob:cate_arr}
\end{problem}

\begin{problem}
    Can one use the flat arrow polynomial to extract slice obstructions for flat knots? 
    \label{prob:slice_arr}
\end{problem}

\begin{problem}
    Which polynomials can be realized as flat arrow polynomials of flat knots? 
    \label{prob:rel_arr}
\end{problem}

\section{The flat Jones-Krushkal polynomial}
\label{chap:jk}
In this section, we define the flat Jones-Krushkal polynomial,  as well as the normalization and enhancement of it. 
We apply them to the problem of distinguishing flat knots. 
The calculations in this section were performed using a matrix-based algorithm; and we refer the reader to \cite{chen-thesis}*{Section~5.2 \& Appendix C} for details on the algorithm and accompanying Python code.
\subsection{Definition}
We begin by introducing the flat Jones-Krushkal polynomial, defined for flat knot diagrams on closed surfaces.
It is closely related to the homological Jones polynomial for links in thickened surfaces, which 
was introduced by Krushkal in \cite{krushkal} and further studied in \cite{jkpoly}.

    Let $D$ be a flat knot diagram with $n$ crossings on a closed surface $\Sigma$.
By applying the skein relation of Figure~\ref{fig:skein_jk}(1) to each flat crossing of $D$, we obtain  
$2^{n}$ states. The states can be indexed by a map $\left\{  1,\ldots,n\right\} \rightarrow \left\{ 0,1 \right\}$.
Denote this set of states by $\mathfrak{S}$.

\begin{figure}[ht]
    \centering
\includegraphics[angle=0,scale=3.2,trim=0 0 0 0]{./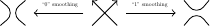}
\caption{Two types of  smoothings}
    \label{fig:skein_jk}
\end{figure}

Each state $S\in \mathfrak{S}$ contains simple closed loops embedded in $\Sigma$. The embedding induces a map $i_*:H_1(S;\Z/2) \rightarrow H_1(\Sigma_g;\Z/2)$.
\begin{definition}
     The \emph{homological Kauffman bracket} of state $S$ is denoted by
     $\langle D\, |\, S\rangle_\Sigma$ and given by $$\langle D\, |\, S\rangle_\Sigma=(-2)^{k(S)} z^{r(S)},$$ where
    $$ k(S) = \dim (\ker (i_*:H_1(S;\Z/2) \rightarrow H_1(\Sigma_g;\Z/2))), $$
    $$ r(S) = \dim (\im (i_*:H_1(S;\Z/2) \rightarrow H_1(\Sigma_g;\Z/2))). $$
Given a flat knot diagram $D$ on the closed surface $\Sigma$, the \emph{flat Jones-Krushkal polynomial} is defined by 
$$\ds J_D(z)= (-1)^{cr(D)} \sum_{S\in \mathfrak{S} } \langle D\, |\, S \rangle_\Sigma= (-1)^{cr(D)}\sum_{S\in \mathfrak{S} } (-2)^{k(S)} z^{r(S)}.$$ 
\end{definition}

\begin{proposition}
    If     $D$ is checkerboard colorable, then $2|J_{D}$.
    If     $D$ is not checkerboard colorable, then $z|J_{D}$.
\label{thm:facotrjk}
\end{proposition}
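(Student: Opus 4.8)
The plan is to analyze the parity of the integer $J_D(z)$ and the divisibility of each monomial by $z$, by working state-by-state in the defining sum $J_D(z) = (-1)^{cr(D)}\sum_{S\in\mathfrak{S}}(-2)^{k(S)}z^{r(S)}$. A single state $S$ contributes $(-2)^{k(S)}z^{r(S)}$, which is automatically even unless $k(S)=0$, and is automatically divisible by $z$ unless $r(S)=0$. So the proposition reduces to controlling the states with $k(S)=0$ (for the first claim) and the states with $r(S)=0$ (for the second). Recall that for each state $S$ the loops are simple closed curves on $\Sigma$, so $r(S)+k(S)=\dim H_1(S;\Z/2)$ equals the number $|S|$ of loops; in particular $k(S)=0$ means all $|S|$ loop classes are linearly independent in $H_1(\Sigma;\Z/2)$, while $r(S)=0$ means every loop of $S$ is null-homologous mod $2$.

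For the first statement (checkerboard colorable $\Rightarrow 2\mid J_D$), I would use the all-$0$ and all-$1$ smoothing states, or more usefully the checkerboard structure directly: when $D$ is checkerboard colorable, one of the two canonical (all-$A$ / all-$B$) states has all of its loops bounding regions of one color, hence null-homologous, forcing $r(S)=0$, so $k(S)=|S|\ge 1$ and that state's contribution is even. The real content is to show the remaining states pair up or individually contribute even numbers. The cleaner route is to observe that checkerboard colorability is equivalent, by Lemma~\ref{thm:acalt} and Lemma~\ref{lem:ccalt}, to $D$ being free-equivalent to an alternating-pattern (hence almost classical) diagram; and for such a diagram one can show $J_D(0)$, i.e. the $z^0$-coefficient, counts states with $r(S)=0$ with signs and a factor $(-2)^{|S|}$, which is even since $|S|\ge 1$ always. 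Then I would check that a free-move changes $J_D$ only through state contributions that remain even, analogous to the argument in the proof of Theorem~\ref{thm:oddconst}. Alternatively — and this is probably the slickest — mod $2$ the bracket $\langle D\,|\,S\rangle_\Sigma$ vanishes unless $k(S)=0$, and if $D$ is checkerboard colorable then in every state each loop separates $\Sigma$ locally into the two colors, which forces at least one loop to be null-homologous or forces $|S|\ge 2$; in either case $k(S)<|S|$, but we need $k(S)=0$ to fail, which requires a genuinely homological argument about how the colors interact with $H_1(\Sigma;\Z/2)$.

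For the second statement ($D$ not checkerboard colorable $\Rightarrow z\mid J_D$), I would show the constant term $J_D(0)=0$. The constant term collects exactly the states with $r(S)=0$, i.e. states all of whose loops are null-homologous mod $2$, each weighted by $(-1)^{cr(D)}(-2)^{|S|}$. I would argue that if $D$ is \emph{not} checkerboard colorable, then these null-homologous states cancel in pairs: pairing a state $S$ with the state $S'$ obtained by flipping the smoothing at one distinguished crossing should preserve the condition $r=0$ while changing $|S|$ by $\pm 1$, hence flipping the sign of $(-2)^{|S|}$ — unless $D$ is checkerboard colorable, in which case such a global sign-reversing involution fails to exist (this is where the checkerboard hypothesis enters). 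Setting up this involution cleanly, and verifying that non-colorability is exactly the obstruction to its failure, is the crux.

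I expect the main obstacle to be the second statement: producing a canonical fixed-point-free, $|S|$-parity-reversing involution on the set of mod-$2$-null-homologous states precisely when $D$ is not checkerboard colorable. The natural candidate (toggle one crossing) does not obviously preserve $r(S)=0$, since changing a smoothing can merge or split loops in ways that alter homology classes. The resolution should come from the combinatorial characterization of checkerboard colorability in terms of the arrow indices $n(e)$ (Definition~\ref{def:acflat}) together with the fact, used in the proof of Theorem~\ref{thm:oddconst}, that a checkerboard colorable diagram lifts to an alternating virtual diagram; I would try to transport the homological Jones-Krushkal state sum through that lift and reduce the parity statement to the known fact that a virtual link diagram is checkerboard colorable iff the relevant Krushkal polynomial is divisible by $2$ (and non-colorable iff divisible by $z$), as established in \cite{jkpoly}. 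If that reduction goes through, both halves follow at once; otherwise I fall back on the direct state-sum involution argument sketched above.
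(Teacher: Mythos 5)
There is a genuine gap: you never identify the one observation that makes both halves immediate, namely that the smoothings of Figure~\ref{fig:skein_jk} do not change the underlying $1$-cycle mod $2$, so for \emph{every} state $S$ the sum of the classes of its loops in $H_1(\Sigma;\Z/2)$ equals $[\omega_D(S^1)]$. Combined with the fact that $D$ is checkerboard colorable exactly when $[\omega_D(S^1)]=0$ in $H_1(\Sigma;\Z/2)$ (the arrow indices are mod~$2$ intersection numbers with the core class, and the mod~$2$ intersection form on the closed surface is nondegenerate), this gives a statement about each state individually: if $D$ is checkerboard colorable, the nonzero element of $H_1(S;\Z/2)$ given by the sum of all loops lies in $\ker i_*$, so $k(S)\ge 1$ and every term $(-2)^{k(S)}z^{r(S)}$ is even; if $D$ is not checkerboard colorable, the image of $i_*$ contains the nonzero class $[\omega_D(S^1)]$, so $r(S)\ge 1$ and every term is divisible by $z$. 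This is the paper's proof, and no cancellation between states is needed.

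Your plan instead analyzes only special states and then tries to handle the rest by a sign-reversing involution (for the second claim) or by free-equivalence/alternating-pattern reductions (for the first), and you explicitly flag the crux of each as unresolved. The involution route is misdirected: when $D$ is not checkerboard colorable there are simply \emph{no} states with $r(S)=0$, so there is nothing to cancel, and, as you yourself note, toggling a smoothing need not preserve $r(S)=0$, so the proposed pairing would not be well defined anyway. Likewise, for the first claim the all-$A$/all-$B$ state only accounts for one term, and the appeal to free-moves or to a divisibility criterion ``established in \cite{jkpoly}'' is not an argument given here; the ``genuinely homological argument about how the colors interact with $H_1(\Sigma;\Z/2)$'' that you say is needed is precisely the missing per-state homology computation above.
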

\begin{proof}
    Let $\omega_D(S^1)$ be the flat knot diagram on $\Sigma$.
    Then the sum of loops in each state $S$ of $\omega_D(S^1)$ is equal to  $[ \omega_D(S^1) ]\in H_1(\Sigma;\Z/2)$.
    Therefore, if  $D$ is checkerboard colorable, then $k(S)>0$ for each state.
    If  $D$ is not checkerboard colorable, then $r(S)>0$ for each state.
\end{proof}

Note that when $z|J_{D}$, the flat Jones-Krushkal polynomial has zero constant term, i.e., $J_D(0)=0$.

Now we show  $J_D(z)$ is an invariant under  FR3 moves in Figure~\ref{fig:frmove} for flat diagrams on the same surface.
\begin{proposition}
    Let $D,D'$ be two flat diagram on surface $\Sigma$, if  $D,D'$ are related by one 
    FR3 move, then $J_D(z)= J_{ D' }(z)$.
    \label{prop:jkfr3}
\end{proposition}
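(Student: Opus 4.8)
The plan is to verify invariance under an FR3 move by a direct local analysis of the state sums, exploiting the fact that an FR3 move is supported in a disk $B \subset \Sigma$ meeting the diagram in three arcs and that the surface $\Sigma$ is unchanged. First I would fix the FR3 move relating $D$ and $D'$ and isolate the disk $B$ in which the three strands are rearranged; outside $B$ the two diagrams agree. Each diagram has three crossings inside $B$, so there are $2^3 = 8$ local states on each side, and these pair up with the $2^{n-3}$ states of the ``outside'' part to give all of $\mathfrak{S}$ for $D$ and $D'$ respectively. The key point is that $k(S)$ and $r(S)$ are computed from the map $i_* : H_1(S;\Z/2) \to H_1(\Sigma;\Z/2)$, which depends only on the free homotopy (indeed homology) classes of the loops of $S$ in $\Sigma$, not on how they sit inside $B$. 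So it suffices to show that, for each fixed choice of ``outside'' resolution, the eight local pieces on the $D$ side and the eight on the $D'$ side contribute the same total to $\sum_{S} (-2)^{k(S)} z^{r(S)}$.

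The main step is therefore a bookkeeping lemma internal to the disk: I would enumerate, for the standard FR3 picture, the eight smoothings of the three crossings on each side, record for each the resulting collection of arcs in $B$ (how the six boundary points of $B$ are joined up, plus whether a free closed loop in the interior of $B$ is created), and check that the multiset of ``boundary-point pairings together with a count of interior trivial loops'' agrees between $D$ and $D'$. This is the same combinatorial identity that underlies invariance of the ordinary Kauffman bracket under the (classical) Reidemeister~III move, and it is well known there; the only extra data here is the number of contractible interior loops, each of which multiplies that state's bracket by $(-2)$ — contractible loops are null-homologous, so they contribute to $k(S)$ and not to $r(S)$. Once the boundary pairings and interior-loop counts match side by side, gluing in any fixed outside resolution produces, on the two sides, collections of closed curves in $\Sigma$ that are isotopic (hence homologous) in pairs, so the corresponding $i_*$ maps have equal kernel and image dimensions, giving equal values of $(-2)^{k(S)} z^{r(S)}$. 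Summing over the $2^{n-3}$ outside resolutions yields $J_D(z) = J_{D'}(z)$; the overall sign $(-1)^{cr(D)}$ is unaffected since an FR3 move preserves the crossing number.

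The hard part will be organizing the eight-versus-eight local comparison cleanly, in particular tracking the interior contractible loops so that the $(-2)^{k(S)}$ factors are correctly accounted for, and making sure the argument that ``homologous curves give the same $k$ and $r$'' is airtight — this uses that $i_*$ depends only on the $\Z/2$-homology classes of the loops, which in turn follows because homologous $1$-cycles have the same image and the kernel dimension is then determined by $r(S)$ via rank-nullity once the total first Betti number of $S$ is fixed (and that Betti number, i.e. the number of loops, is constant on each matched pair). A secondary subtlety is that the three arcs involved in the FR3 move need not bound a disk disjoint from the rest of the diagram on $\Sigma$ in general, but the FR3 move as drawn in Figure~\ref{fig:frmove} is by definition supported in such a disk, so this is not an issue. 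I expect no difficulty beyond careful casework, since $\Sigma$ plays an entirely passive role and the computation reduces to the classical one enhanced by null-homologous loop counts.
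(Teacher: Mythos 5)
Your overall framework (locality of the move, the fact that $k(S)$ and $r(S)$ only depend on the $\Z/2$-homology classes of the state loops, gluing a fixed outside resolution to the eight inside resolutions) is sound, but the central ``bookkeeping lemma'' you rely on is false, so the proof as written has a genuine gap. The eight local states on the two sides of an FR3 move do \emph{not} agree as a multiset of boundary-point pairings together with interior trivial-loop counts. Concretely, label the six endpoints of the disk $1,\dots,6$ cyclically, with the three strands $1\text{--}4$, $2\text{--}5$, $3\text{--}6$; on one side of the move the non-crossing pairing $(16)(23)(45)$ occurs in three loop-free states and once more together with a contractible loop, while on the other side it occurs exactly once (and the roles are played instead by $(12)(34)(56)$ there). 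The two state sums agree only because the null-homologous loop is evaluated at $(-2)$, so that $1+1+1+(-2)=1$; this is an FR2-type cancellation, not a termwise matching. For the same reason your appeal to ``the same combinatorial identity that underlies invariance of the ordinary Kauffman bracket under Reidemeister III'' is misplaced: classical R3 invariance of the bracket is also not a termwise state correspondence, but a consequence of R2 invariance, i.e.\ of the specific loop value; likewise here the value $(-2)^{k(S)}$ for a contractible loop is not a bookkeeping convenience but the exact ingredient that makes the sums equal.

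The paper's proof is precisely the repaired version of your plan: it expands the FR3 tangle on both sides into the eight local states, evaluates the single state containing the contractible central triangle with the factor $-2$, observes the resulting cancellation reducing each side to the same five-term expression of non-crossing tangles, and then notes (as you do) that gluing in the common exterior cannot change homology classes, so the full polynomials agree; the overall sign is unaffected since $cr(D)=cr(D')$. To fix your write-up, replace the multiset-matching claim with this cancellation step (equivalently, invoke the already-established FR2 computation for the two crossings of the sliding strand with the smoothed ``bubble''), and the rest of your argument goes through.
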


\begin{proof}
    This follows from the calculation of the flat skein bracket:

\begin{eqnarray*}
\langle \mbox{\begin{tikzpicture}[scale=0.24,rotate=0]
        \draw[line width=0.35mm,-,yshift=-0mm] (50:1.1cm) to (-130:1.1cm);
        \draw[line width=0.35mm,-] (0:1.2cm) to [out=140,in=40] (180:1.2cm);
        \draw[line width=0.35mm,-,yshift=-0mm] (-50:1.1cm) to (130:1.1cm);
\end{tikzpicture}} \rangle_\Sigma
&=&
\langle \mbox{\begin{tikzpicture}[scale=0.24]
            \draw[line width=0.35mm,rotate=30] (30:5mm) to   (17:9mm);
            \draw[line width=0.35mm,rotate=30] (-30:5mm) to   (-17:9mm);
            \draw[line width=0.35mm,rotate=150] (30:5mm) to  (17:9mm);
            \draw[line width=0.35mm,rotate=150] (-30:5mm) to  (-17:9mm);
            \draw[line width=0.35mm,rotate=270] (30:5mm)  to  (17:9mm);
            \draw[line width=0.35mm,rotate=270] (-30:5mm) to  (-17:9mm);
            \draw[line width=0.35mm,rotate=0] (60:5mm) to (120:5mm);
            \draw[line width=0.35mm,rotate=120] (60:5mm) to (120:5mm);
            \draw[line width=0.35mm,rotate=-120] (60:5mm) to (120:5mm);
\end{tikzpicture}}\rangle_\Sigma
+
\langle \mbox{\begin{tikzpicture}[scale=0.24]
            \draw[line width=0.35mm,rotate=30] (30:5mm) to  (17:9mm);
            \draw[line width=0.35mm,rotate=30] (-30:5mm) to  (-17:9mm);
            \draw[line width=0.35mm,rotate=150] (30:5mm) to  (17:9mm);
            \draw[line width=0.35mm,rotate=150] (-30:5mm) to  (-17:9mm);
            \draw[line width=0.35mm,rotate=270] (30:5mm) to[out=0,in=0]  (-30:5mm);
            \draw[line width=0.35mm,rotate=270] [xshift=5mm](30:5mm) to[out=180,in=180]  (-30:5mm);
            \draw[line width=0.35mm,rotate=0] (60:5mm) to (120:5mm);
            \draw[line width=0.35mm,rotate=120] (60:5mm) to (120:5mm);
            \draw[line width=0.35mm,rotate=-120] (60:5mm) to (120:5mm);
\end{tikzpicture}}\rangle_\Sigma
+
\langle \mbox{\begin{tikzpicture}[scale=0.24]
            \draw[line width=0.35mm,rotate=30] (30:5mm) to[out=0,in=0] (-30:5mm);
            \draw[line width=0.35mm,rotate=30] [xshift=5mm](30:5mm) to[out=180,in=180] (-30:5mm);
            \draw[line width=0.35mm,rotate=150] (30:5mm) to[out=0,in=0] (-30:5mm);
            \draw[line width=0.35mm,rotate=150] [xshift=5mm](30:5mm) to[out=180,in=180] (-30:5mm);
            \draw[line width=0.35mm,rotate=270] (30:5mm)  to  (17:9mm);
            \draw[line width=0.35mm,rotate=270] (-30:5mm) to  (-17:9mm);
            \draw[line width=0.35mm,rotate=0] (60:5mm) to (120:5mm);
            \draw[line width=0.35mm,rotate=120] (60:5mm) to (120:5mm);
            \draw[line width=0.35mm,rotate=-120] (60:5mm) to (120:5mm);
\end{tikzpicture}}\rangle_\Sigma
+
\langle \mbox{\begin{tikzpicture}[scale=0.24]
            \draw[line width=0.35mm,rotate=30] (30:5mm) to[out=0,in=0] (-30:5mm);
            \draw[line width=0.35mm,rotate=30] [xshift=5mm](30:5mm) to[out=180,in=180] (-30:5mm);
            \draw[line width=0.35mm,rotate=150] (30:5mm) to[out=0,in=0]   (-30:5mm);
            \draw[line width=0.35mm,rotate=150] [xshift=5mm](30:5mm) to[out=180,in=180]  (-30:5mm);
            \draw[line width=0.35mm,rotate=270] (30:5mm) to[out=0,in=0]  (-30:5mm);
            \draw[line width=0.35mm,rotate=270] [xshift=5mm](30:5mm) to[out=180,in=180]  (-30:5mm);
            \draw[line width=0.35mm,rotate=0] (60:5mm) to (120:5mm);
            \draw[line width=0.35mm,rotate=120] (60:5mm) to (120:5mm);
            \draw[line width=0.35mm,rotate=-120] (60:5mm) to (120:5mm);
\end{tikzpicture}}\rangle_\Sigma 
+
\langle \mbox{\begin{tikzpicture}[scale=0.24,rotate=-120]
            \draw[line width=0.35mm,rotate=30] (30:5mm) to   (17:9mm);
            \draw[line width=0.35mm,rotate=30] (-30:5mm) to   (-17:9mm);
            \draw[line width=0.35mm,rotate=150] (30:5mm) to (17:9mm);
            \draw[line width=0.35mm,rotate=150] (-30:5mm) to (-17:9mm);
            \draw[line width=0.35mm,rotate=270] (30:5mm) to[out=0,in=0]  (-30:5mm);
            \draw[line width=0.35mm,rotate=270] [xshift=5mm](30:5mm) to[out=180,in=180]  (-30:5mm);
            \draw[line width=0.35mm,rotate=0] (60:5mm) to (120:5mm);
            \draw[line width=0.35mm,rotate=120] (60:5mm) to (120:5mm);
            \draw[line width=0.35mm,rotate=-120] (60:5mm) to (120:5mm);
\end{tikzpicture}}\rangle_\Sigma
+
\langle \mbox{\begin{tikzpicture}[scale=0.24,rotate=-120]
            \draw[line width=0.35mm,rotate=30] (30:5mm) to[out=0,in=0] (-30:5mm);
            \draw[line width=0.35mm,rotate=30] [xshift=5mm](30:5mm) to[out=180,in=180] (-30:5mm);
            \draw[line width=0.35mm,rotate=150] (30:5mm) to[out=0,in=0] (-30:5mm);
            \draw[line width=0.35mm,rotate=150] [xshift=5mm](30:5mm) to[out=180,in=180] (-30:5mm);
            \draw[line width=0.35mm,rotate=270] (30:5mm)  to  (17:9mm);
            \draw[line width=0.35mm,rotate=270] (-30:5mm) to  (-17:9mm);
            \draw[line width=0.35mm,rotate=0] (60:5mm) to (120:5mm);
            \draw[line width=0.35mm,rotate=120] (60:5mm) to (120:5mm);
            \draw[line width=0.35mm,rotate=-120] (60:5mm) to (120:5mm);
\end{tikzpicture}}\rangle_\Sigma
+
\langle \mbox{\begin{tikzpicture}[scale=0.24,rotate=120]
            \draw[line width=0.35mm,rotate=30] (30:5mm) to[out=0,in=0] (-30:5mm);
            \draw[line width=0.35mm,rotate=30] [xshift=5mm](30:5mm) to[out=180,in=180] (-30:5mm);
            \draw[line width=0.35mm,rotate=150] (30:5mm) to[out=0,in=0] (-30:5mm);
            \draw[line width=0.35mm,rotate=150] [xshift=5mm](30:5mm) to[out=180,in=180] (-30:5mm);
            \draw[line width=0.35mm,rotate=270] (30:5mm)  to (17:9mm);
            \draw[line width=0.35mm,rotate=270] (-30:5mm) to (-17:9mm);
            \draw[line width=0.35mm,rotate=0] (60:5mm) to (120:5mm);
            \draw[line width=0.35mm,rotate=120] (60:5mm) to (120:5mm);
            \draw[line width=0.35mm,rotate=-120] (60:5mm) to (120:5mm);
\end{tikzpicture}}\rangle_\Sigma
+\langle \mbox{\begin{tikzpicture}[scale=0.24,rotate=120]
            \draw[line width=0.35mm,rotate=30] (30:5mm) to   (17:9mm);
            \draw[line width=0.35mm,rotate=30] (-30:5mm) to   (-17:9mm);
            \draw[line width=0.35mm,rotate=150] (30:5mm) to  (17:9mm);
            \draw[line width=0.35mm,rotate=150] (-30:5mm) to  (-17:9mm);
            \draw[line width=0.35mm,rotate=270] (30:5mm) to[out=0,in=0]  (-30:5mm);
            \draw[line width=0.35mm,rotate=270] [xshift=5mm](30:5mm) to[out=180,in=180]  (-30:5mm);
            \draw[line width=0.35mm,rotate=0] (60:5mm) to (120:5mm);
            \draw[line width=0.35mm,rotate=120] (60:5mm) to (120:5mm);
            \draw[line width=0.35mm,rotate=-120] (60:5mm) to (120:5mm);
\end{tikzpicture}}\rangle_\Sigma
\\
&=&
\langle \mbox{\begin{tikzpicture}[scale=0.24]
            \draw[line width=0.35mm,rotate=30] (30:5mm)   to   (17:9mm);
            \draw[line width=0.35mm,rotate=30] (-30:5mm)  to   (-17:9mm);
            \draw[line width=0.35mm,rotate=150] (30:5mm)  to   (17:9mm);
            \draw[line width=0.35mm,rotate=150] (-30:5mm) to   (-17:9mm);
            \draw[line width=0.35mm,rotate=270] (30:5mm)  to   (17:9mm);
            \draw[line width=0.35mm,rotate=270] (-30:5mm) to   (-17:9mm);
            \draw[line width=0.35mm,rotate=0] (60:5mm) to (120:5mm);
            \draw[line width=0.35mm,rotate=120] (60:5mm) to (120:5mm);
            \draw[line width=0.35mm,rotate=-120] (60:5mm) to (120:5mm);
\end{tikzpicture}}\rangle_\Sigma
+
\langle \mbox{\begin{tikzpicture}[scale=0.20,rotate=-60,xscale=-1]
            \draw[line width=0.35mm] (60:9mm) to   (0:9mm);
            \draw[line width=0.35mm] (120:9mm) to (-60:9mm);
            \draw[line width=0.35mm] (180:9mm) to   (240:9mm);
\end{tikzpicture}}\rangle_\Sigma
+
\langle \mbox{\begin{tikzpicture}[scale=0.24,rotate=60]
            \draw[line width=0.35mm,rotate=30] (30:5mm)   to   (17:9mm);
            \draw[line width=0.35mm,rotate=30] (-30:5mm)  to   (-17:9mm);
            \draw[line width=0.35mm,rotate=150] (30:5mm)  to   (17:9mm);
            \draw[line width=0.35mm,rotate=150] (-30:5mm) to   (-17:9mm);
            \draw[line width=0.35mm,rotate=270] (30:5mm)  to   (17:9mm);
            \draw[line width=0.35mm,rotate=270] (-30:5mm) to   (-17:9mm);
            \draw[line width=0.35mm,rotate=0] (60:5mm) to (120:5mm);
            \draw[line width=0.35mm,rotate=120] (60:5mm) to (120:5mm);
            \draw[line width=0.35mm,rotate=-120] (60:5mm) to (120:5mm);
\end{tikzpicture}}\rangle_\Sigma
-2\langle \mbox{\begin{tikzpicture}[scale=0.24,rotate=60]
            \draw[line width=0.35mm,rotate=30] (30:5mm)   to   (17:9mm);
            \draw[line width=0.35mm,rotate=30] (-30:5mm)  to   (-17:9mm);
            \draw[line width=0.35mm,rotate=150] (30:5mm)  to   (17:9mm);
            \draw[line width=0.35mm,rotate=150] (-30:5mm) to   (-17:9mm);
            \draw[line width=0.35mm,rotate=270] (30:5mm)  to   (17:9mm);
            \draw[line width=0.35mm,rotate=270] (-30:5mm) to   (-17:9mm);
            \draw[line width=0.35mm,rotate=0] (60:5mm) to (120:5mm);
            \draw[line width=0.35mm,rotate=120] (60:5mm) to (120:5mm);
            \draw[line width=0.35mm,rotate=-120] (60:5mm) to (120:5mm);
\end{tikzpicture}}\rangle_\Sigma
+\langle \mbox{\begin{tikzpicture}[scale=0.20,rotate=-60]
            \draw[line width=0.35mm] (60:9mm) to (0:9mm);
            \draw[line width=0.35mm] (120:9mm) to (-60:9mm);
            \draw[line width=0.35mm](180:9mm) to (240:9mm);
\end{tikzpicture}}\rangle_\Sigma
+
\langle \mbox{\begin{tikzpicture}[scale=0.24,rotate=60]
            \draw[line width=0.35mm,rotate=30] (30:5mm)   to   (17:9mm);
            \draw[line width=0.35mm,rotate=30] (-30:5mm)  to   (-17:9mm);
            \draw[line width=0.35mm,rotate=150] (30:5mm)  to   (17:9mm);
            \draw[line width=0.35mm,rotate=150] (-30:5mm) to   (-17:9mm);
            \draw[line width=0.35mm,rotate=270] (30:5mm)  to   (17:9mm);
            \draw[line width=0.35mm,rotate=270] (-30:5mm) to   (-17:9mm);
            \draw[line width=0.35mm,rotate=0] (60:5mm) to (120:5mm);
            \draw[line width=0.35mm,rotate=120] (60:5mm) to (120:5mm);
            \draw[line width=0.35mm,rotate=-120] (60:5mm) to (120:5mm);
\end{tikzpicture}}\rangle_\Sigma
+
\langle \mbox{\begin{tikzpicture}[scale=0.24,rotate=60]
            \draw[line width=0.35mm,rotate=30] (30:5mm)   to   (17:9mm);
            \draw[line width=0.35mm,rotate=30] (-30:5mm)  to   (-17:9mm);
            \draw[line width=0.35mm,rotate=150] (30:5mm)  to   (17:9mm);
            \draw[line width=0.35mm,rotate=150] (-30:5mm) to   (-17:9mm);
            \draw[line width=0.35mm,rotate=270] (30:5mm)  to   (17:9mm);
            \draw[line width=0.35mm,rotate=270] (-30:5mm) to   (-17:9mm);
            \draw[line width=0.35mm,rotate=0] (60:5mm) to (120:5mm);
            \draw[line width=0.35mm,rotate=120] (60:5mm) to (120:5mm);
            \draw[line width=0.35mm,rotate=-120] (60:5mm) to (120:5mm);
\end{tikzpicture}}\rangle_\Sigma
+
\langle \mbox{\begin{tikzpicture}[scale=0.20,rotate=0]
            \draw[line width=0.35mm] (60:9mm) to (0:9mm);
            \draw[line width=0.35mm] (120:9mm) to (-60:9mm);
            \draw[line width=0.35mm] (180:9mm) to (240:9mm);
\end{tikzpicture}}\rangle_\Sigma
\\
&=&
\langle \mbox{\begin{tikzpicture}[scale=0.24]
            \draw[line width=0.35mm,rotate=30] (30:5mm)   to   (17:9mm);
            \draw[line width=0.35mm,rotate=30] (-30:5mm)  to   (-17:9mm);
            \draw[line width=0.35mm,rotate=150] (30:5mm)  to   (17:9mm);
            \draw[line width=0.35mm,rotate=150] (-30:5mm) to   (-17:9mm);
            \draw[line width=0.35mm,rotate=270] (30:5mm)  to   (17:9mm);
            \draw[line width=0.35mm,rotate=270] (-30:5mm) to   (-17:9mm);
            \draw[line width=0.35mm,rotate=0] (60:5mm) to (120:5mm);
            \draw[line width=0.35mm,rotate=120] (60:5mm) to (120:5mm);
            \draw[line width=0.35mm,rotate=-120] (60:5mm) to (120:5mm);
\end{tikzpicture}}\rangle_\Sigma
+
\langle \mbox{\begin{tikzpicture}[scale=0.20,rotate=-60,xscale=-1]
            \draw[line width=0.35mm] (60:9mm) to   (0:9mm);
            \draw[line width=0.35mm] (120:9mm) to (-60:9mm);
            \draw[line width=0.35mm] (180:9mm) to   (240:9mm);
\end{tikzpicture}}\rangle_\Sigma
+\langle \mbox{\begin{tikzpicture}[scale=0.20,rotate=-60]
            \draw[line width=0.35mm] (60:9mm) to (0:9mm);
            \draw[line width=0.35mm] (120:9mm) to (-60:9mm);
            \draw[line width=0.35mm] (180:9mm) to (240:9mm);
\end{tikzpicture}}\rangle_\Sigma
+
\langle \mbox{\begin{tikzpicture}[scale=0.24,rotate=60]
            \draw[line width=0.35mm,rotate=30] (30:5mm)   to   (17:9mm);
            \draw[line width=0.35mm,rotate=30] (-30:5mm)  to   (-17:9mm);
            \draw[line width=0.35mm,rotate=150] (30:5mm)  to   (17:9mm);
            \draw[line width=0.35mm,rotate=150] (-30:5mm) to   (-17:9mm);
            \draw[line width=0.35mm,rotate=270] (30:5mm)  to   (17:9mm);
            \draw[line width=0.35mm,rotate=270] (-30:5mm) to   (-17:9mm);
            \draw[line width=0.35mm,rotate=0] (60:5mm) to (120:5mm);
            \draw[line width=0.35mm,rotate=120] (60:5mm) to (120:5mm);
            \draw[line width=0.35mm,rotate=-120] (60:5mm) to (120:5mm);
\end{tikzpicture}}\rangle_\Sigma
+
\langle \mbox{\begin{tikzpicture}[scale=0.20,rotate=0]
            \draw[line width=0.35mm] (60:9mm) to (0:9mm);
            \draw[line width=0.35mm] (120:9mm) to (-60:9mm);
            \draw[line width=0.35mm] (180:9mm) to (240:9mm);
\end{tikzpicture}}\rangle_\Sigma
\\
&=&
\langle \mbox{\begin{tikzpicture}[yscale=-1,scale=0.24]
            \draw[line width=0.35mm,rotate=30] (30:5mm)   to   (17:9mm);
            \draw[line width=0.35mm,rotate=30] (-30:5mm)  to   (-17:9mm);
            \draw[line width=0.35mm,rotate=150] (30:5mm)  to   (17:9mm);
            \draw[line width=0.35mm,rotate=150] (-30:5mm) to   (-17:9mm);
            \draw[line width=0.35mm,rotate=270] (30:5mm)  to   (17:9mm);
            \draw[line width=0.35mm,rotate=270] (-30:5mm) to   (-17:9mm);
            \draw[line width=0.35mm,rotate=0] (60:5mm) to (120:5mm);
            \draw[line width=0.35mm,rotate=120] (60:5mm) to (120:5mm);
            \draw[line width=0.35mm,rotate=-120] (60:5mm) to (120:5mm);
\end{tikzpicture}}\rangle_\Sigma
+
\langle \mbox{\begin{tikzpicture}[yscale=-1,scale=0.20,rotate=-60,xscale=-1]
            \draw[line width=0.35mm] (60:9mm) to   (0:9mm);
            \draw[line width=0.35mm] (120:9mm) to (-60:9mm);
            \draw[line width=0.35mm] (180:9mm) to   (240:9mm);
\end{tikzpicture}}\rangle_\Sigma
+
\langle \mbox{\begin{tikzpicture}[yscale=-1,scale=0.24,rotate=60]
            \draw[line width=0.35mm,rotate=30] (30:5mm)   to   (17:9mm);
            \draw[line width=0.35mm,rotate=30] (-30:5mm)  to   (-17:9mm);
            \draw[line width=0.35mm,rotate=150] (30:5mm)  to   (17:9mm);
            \draw[line width=0.35mm,rotate=150] (-30:5mm) to   (-17:9mm);
            \draw[line width=0.35mm,rotate=270] (30:5mm)  to   (17:9mm);
            \draw[line width=0.35mm,rotate=270] (-30:5mm) to   (-17:9mm);
            \draw[line width=0.35mm,rotate=0] (60:5mm) to (120:5mm);
            \draw[line width=0.35mm,rotate=120] (60:5mm) to (120:5mm);
            \draw[line width=0.35mm,rotate=-120] (60:5mm) to (120:5mm);
\end{tikzpicture}}\rangle_\Sigma
-2\langle \mbox{\begin{tikzpicture}[yscale=-1,scale=0.24,rotate=60]
            \draw[line width=0.35mm,rotate=30] (30:5mm)   to   (17:9mm);
            \draw[line width=0.35mm,rotate=30] (-30:5mm)  to   (-17:9mm);
            \draw[line width=0.35mm,rotate=150] (30:5mm)  to   (17:9mm);
            \draw[line width=0.35mm,rotate=150] (-30:5mm) to   (-17:9mm);
            \draw[line width=0.35mm,rotate=270] (30:5mm)  to   (17:9mm);
            \draw[line width=0.35mm,rotate=270] (-30:5mm) to   (-17:9mm);
            \draw[line width=0.35mm,rotate=0] (60:5mm) to (120:5mm);
            \draw[line width=0.35mm,rotate=120] (60:5mm) to (120:5mm);
            \draw[line width=0.35mm,rotate=-120] (60:5mm) to (120:5mm);
\end{tikzpicture}}\rangle_\Sigma
+\langle \mbox{\begin{tikzpicture}[yscale=-1,scale=0.20,rotate=-60]
            \draw[line width=0.35mm] (60:9mm) to (0:9mm);
            \draw[line width=0.35mm] (120:9mm) to (-60:9mm);
            \draw[line width=0.35mm] (180:9mm) to (240:9mm);
\end{tikzpicture}}\rangle_\Sigma
+
\langle \mbox{\begin{tikzpicture}[yscale=-1,scale=0.24,rotate=60]
            \draw[line width=0.35mm,rotate=30] (30:5mm)   to   (17:9mm);
            \draw[line width=0.35mm,rotate=30] (-30:5mm)  to   (-17:9mm);
            \draw[line width=0.35mm,rotate=150] (30:5mm)  to   (17:9mm);
            \draw[line width=0.35mm,rotate=150] (-30:5mm) to   (-17:9mm);
            \draw[line width=0.35mm,rotate=270] (30:5mm)  to   (17:9mm);
            \draw[line width=0.35mm,rotate=270] (-30:5mm) to   (-17:9mm);
            \draw[line width=0.35mm,rotate=0] (60:5mm) to (120:5mm);
            \draw[line width=0.35mm,rotate=120] (60:5mm) to (120:5mm);
            \draw[line width=0.35mm,rotate=-120] (60:5mm) to (120:5mm);
\end{tikzpicture}}\rangle_\Sigma
+
\langle \mbox{\begin{tikzpicture}[yscale=-1,scale=0.24,rotate=60]
            \draw[line width=0.35mm,rotate=30] (30:5mm)   to   (17:9mm);
            \draw[line width=0.35mm,rotate=30] (-30:5mm)  to   (-17:9mm);
            \draw[line width=0.35mm,rotate=150] (30:5mm)  to   (17:9mm);
            \draw[line width=0.35mm,rotate=150] (-30:5mm) to   (-17:9mm);
            \draw[line width=0.35mm,rotate=270] (30:5mm)  to   (17:9mm);
            \draw[line width=0.35mm,rotate=270] (-30:5mm) to   (-17:9mm);
            \draw[line width=0.35mm,rotate=0] (60:5mm) to (120:5mm);
            \draw[line width=0.35mm,rotate=120] (60:5mm) to (120:5mm);
            \draw[line width=0.35mm,rotate=-120] (60:5mm) to (120:5mm);
\end{tikzpicture}}\rangle_\Sigma
+
\langle \mbox{\begin{tikzpicture}[yscale=-1,scale=0.20,rotate=0]
            \draw[line width=0.35mm] (60:9mm) to (0:9mm);
            \draw[line width=0.35mm] (120:9mm) to (-60:9mm);
            \draw[line width=0.35mm] (180:9mm) to (240:9mm);
\end{tikzpicture}}\rangle_\Sigma
\\
&=&
\langle \mbox{\begin{tikzpicture}[yscale=-1,scale=0.24]
            \draw[line width=0.35mm,rotate=30] (30:5mm) to   (17:9mm);
            \draw[line width=0.35mm,rotate=30] (-30:5mm) to   (-17:9mm);
            \draw[line width=0.35mm,rotate=150] (30:5mm) to  (17:9mm);
            \draw[line width=0.35mm,rotate=150] (-30:5mm) to  (-17:9mm);
            \draw[line width=0.35mm,rotate=270] (30:5mm)  to  (17:9mm);
            \draw[line width=0.35mm,rotate=270] (-30:5mm) to  (-17:9mm);
            \draw[line width=0.35mm,rotate=0] (60:5mm) to (120:5mm);
            \draw[line width=0.35mm,rotate=120] (60:5mm) to (120:5mm);
            \draw[line width=0.35mm,rotate=-120] (60:5mm) to (120:5mm);
\end{tikzpicture}}\rangle_\Sigma
+
\langle \mbox{\begin{tikzpicture}[yscale=-1,scale=0.24]
            \draw[line width=0.35mm,rotate=30] (30:5mm) to  (17:9mm);
            \draw[line width=0.35mm,rotate=30] (-30:5mm) to  (-17:9mm);
            \draw[line width=0.35mm,rotate=150] (30:5mm) to  (17:9mm);
            \draw[line width=0.35mm,rotate=150] (-30:5mm) to  (-17:9mm);
            \draw[line width=0.35mm,rotate=270] (30:5mm) to[out=0,in=0]  (-30:5mm);
            \draw[line width=0.35mm,rotate=270] [xshift=5mm](30:5mm) to[out=180,in=180]  (-30:5mm);
            \draw[line width=0.35mm,rotate=0] (60:5mm) to (120:5mm);
            \draw[line width=0.35mm,rotate=120] (60:5mm) to (120:5mm);
            \draw[line width=0.35mm,rotate=-120] (60:5mm) to (120:5mm);
\end{tikzpicture}}\rangle_\Sigma
+
\langle \mbox{\begin{tikzpicture}[yscale=-1,scale=0.24]
            \draw[line width=0.35mm,rotate=30] (30:5mm) to[out=0,in=0] (-30:5mm);
            \draw[line width=0.35mm,rotate=30] [xshift=5mm](30:5mm) to[out=180,in=180] (-30:5mm);
            \draw[line width=0.35mm,rotate=150] (30:5mm) to[out=0,in=0] (-30:5mm);
            \draw[line width=0.35mm,rotate=150] [xshift=5mm](30:5mm) to[out=180,in=180] (-30:5mm);
            \draw[line width=0.35mm,rotate=270] (30:5mm)  to  (17:9mm);
            \draw[line width=0.35mm,rotate=270] (-30:5mm) to  (-17:9mm);
            \draw[line width=0.35mm,rotate=0] (60:5mm) to (120:5mm);
            \draw[line width=0.35mm,rotate=120] (60:5mm) to (120:5mm);
            \draw[line width=0.35mm,rotate=-120] (60:5mm) to (120:5mm);
\end{tikzpicture}}\rangle_\Sigma
+
\langle \mbox{\begin{tikzpicture}[yscale=-1,scale=0.24]
            \draw[line width=0.35mm,rotate=30] (30:5mm) to[out=0,in=0] (-30:5mm);
            \draw[line width=0.35mm,rotate=30] [xshift=5mm](30:5mm) to[out=180,in=180] (-30:5mm);
            \draw[line width=0.35mm,rotate=150] (30:5mm) to[out=0,in=0]   (-30:5mm);
            \draw[line width=0.35mm,rotate=150] [xshift=5mm](30:5mm) to[out=180,in=180]  (-30:5mm);
            \draw[line width=0.35mm,rotate=270] (30:5mm) to[out=0,in=0]  (-30:5mm);
            \draw[line width=0.35mm,rotate=270] [xshift=5mm](30:5mm) to[out=180,in=180]  (-30:5mm);
            \draw[line width=0.35mm,rotate=0] (60:5mm) to (120:5mm);
            \draw[line width=0.35mm,rotate=120] (60:5mm) to (120:5mm);
            \draw[line width=0.35mm,rotate=-120] (60:5mm) to (120:5mm);
\end{tikzpicture}}\rangle_\Sigma
+
\langle \mbox{\begin{tikzpicture}[yscale=-1,scale=0.24,rotate=-120]
            \draw[line width=0.35mm,rotate=30] (30:5mm) to   (17:9mm);
            \draw[line width=0.35mm,rotate=30] (-30:5mm) to   (-17:9mm);
            \draw[line width=0.35mm,rotate=150] (30:5mm) to (17:9mm);
            \draw[line width=0.35mm,rotate=150] (-30:5mm) to (-17:9mm);
            \draw[line width=0.35mm,rotate=270] (30:5mm) to[out=0,in=0]  (-30:5mm);
            \draw[line width=0.35mm,rotate=270] [xshift=5mm](30:5mm) to[out=180,in=180]  (-30:5mm);
            \draw[line width=0.35mm,rotate=0] (60:5mm) to (120:5mm);
            \draw[line width=0.35mm,rotate=120] (60:5mm) to (120:5mm);
            \draw[line width=0.35mm, rotate=-120] (60:5mm) to (120:5mm);
\end{tikzpicture}}\rangle_\Sigma
+
\langle \mbox{\begin{tikzpicture}[yscale=-1,scale=0.24,rotate=-120]
            \draw[line width=0.35mm,rotate=30] (30:5mm) to[out=0,in=0] (-30:5mm);
            \draw[line width=0.35mm,rotate=30] [xshift=5mm](30:5mm) to[out=180,in=180] (-30:5mm);
            \draw[line width=0.35mm,rotate=150] (30:5mm) to[out=0,in=0] (-30:5mm);
            \draw[line width=0.35mm,rotate=150] [xshift=5mm](30:5mm) to[out=180,in=180] (-30:5mm);
            \draw[line width=0.35mm,rotate=270] (30:5mm)  to  (17:9mm);
            \draw[line width=0.35mm,rotate=270] (-30:5mm) to  (-17:9mm);
            \draw[line width=0.35mm,rotate=0] (60:5mm) to (120:5mm);
            \draw[line width=0.35mm,rotate=120] (60:5mm) to (120:5mm);
            \draw[line width=0.35mm,rotate=-120] (60:5mm) to (120:5mm);
\end{tikzpicture}}\rangle_\Sigma
+
\langle \mbox{\begin{tikzpicture}[yscale=-1,scale=0.24,rotate=120]
            \draw[line width=0.35mm,rotate=30] (30:5mm) to[out=0,in=0] (-30:5mm);
            \draw[line width=0.35mm,rotate=30] [xshift=5mm](30:5mm) to[out=180,in=180] (-30:5mm);
            \draw[line width=0.35mm,rotate=150] (30:5mm) to[out=0,in=0] (-30:5mm);
            \draw[line width=0.35mm,rotate=150] [xshift=5mm](30:5mm) to[out=180,in=180] (-30:5mm);
            \draw[line width=0.35mm,rotate=270] (30:5mm)  to (17:9mm);
            \draw[line width=0.35mm,rotate=270] (-30:5mm) to (-17:9mm);
            \draw[line width=0.35mm,rotate=0] (60:5mm) to (120:5mm);
            \draw[line width=0.35mm,rotate=120] (60:5mm) to (120:5mm);
            \draw[line width=0.35mm,rotate=-120] (60:5mm) to (120:5mm);
\end{tikzpicture}}\rangle_\Sigma
+\langle \mbox{\begin{tikzpicture}[scale=0.24,rotate=120]
            \draw[line width=0.35mm,rotate=30] (30:5mm) to   (17:9mm);
            \draw[line width=0.35mm,rotate=30] (-30:5mm) to   (-17:9mm);
            \draw[line width=0.35mm,rotate=150] (30:5mm) to  (17:9mm);
            \draw[line width=0.35mm,rotate=150] (-30:5mm) to  (-17:9mm);
            \draw[line width=0.35mm,rotate=270] (30:5mm) to[out=0,in=0]  (-30:5mm);
            \draw[line width=0.35mm,rotate=270] [xshift=5mm](30:5mm) to[out=180,in=180]  (-30:5mm);
            \draw[line width=0.35mm,rotate=0] (60:5mm) to (120:5mm);
            \draw[line width=0.35mm,rotate=120] (60:5mm) to (120:5mm);
            \draw[line width=0.35mm,rotate=-120] (60:5mm) to (120:5mm);
\end{tikzpicture}}\rangle_\Sigma
\\
&=&
\langle \mbox{\begin{tikzpicture}[yscale=-1,scale=0.20,rotate=0]
        \draw[line width=0.35mm,-,yshift=-0mm] (50:1.1cm) to (-130:1.1cm);
        \draw[line width=0.35mm,-] (0:1.2cm) to [out=140,in=40] (180:1.2cm);
        \draw[line width=0.35mm,-,yshift=-0mm] (-50:1.1cm) to (130:1.1cm);
\end{tikzpicture}} \rangle_\Sigma
\end{eqnarray*}

\end{proof}

\begin{proposition}
    Let $D,D'$ be two minimal genus diagrams of a flat knot $\alpha$. Then  $ J_D(z)=  J_{ D' }(z)$.
\end{proposition}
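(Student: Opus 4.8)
The strategy is to prove that $J_D(z)$, computed with respect to the Carter surface of a flat knot diagram $D$, is unchanged by every flat Reidemeister move, and then to invoke the monotonicity theorem (Theorem~\ref{thm:monotonicity}). Invariance under FR3 is Proposition~\ref{prop:jkfr3}, together with the observation that an FR3 move is a local modification inside a disk which preserves the local face pattern (one triangular region and three outer regions), hence by an Euler characteristic count does not change the genus of the Carter surface. The virtual moves VR1--VR3 and the mixed move FR4 need nothing: on a surface they are ambient isotopies and they leave the Gauss diagram, the crossing number $cr(D)$, and the homology classes of all state loops unchanged. For FR1 and FR2 one runs the skein computations of Figure~\ref{fig:skein_jk}(1); these are the $A=1$ specializations of the standard arguments for the homological Jones polynomial and are shorter than the FR3 case. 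For example, at an FR1 move the two smoothings of the kink produce the reduced diagram $D_0$ and the diagram $D_0\sqcup c$ with $c$ a contractible circle; since a contractible loop contributes a factor $-2$ to $(-2)^{k(S)}$ and nothing to $z^{r(S)}$, the unnormalized sum $B(D):=\sum_S(-2)^{k(S)}z^{r(S)}$ satisfies $B(D)=B(D_0)+B(D_0\sqcup c)=-B(D_0)$, while $cr(D)=cr(D_0)+1$, so the normalization $(-1)^{cr(D)}$ yields $J_D=J_{D_0}$; FR2 is handled the same way. I would also record that a crossing-decreasing FR1 removes exactly one crossing and one face and a crossing-decreasing FR2 removes two crossings and two faces, so such moves never raise the genus of the Carter surface.

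\smallskip
The second step assembles the surface bookkeeping. By Theorem~\ref{thm:monotonicity}, any diagram of $\alpha$ reduces by crossing-decreasing flat Reidemeister moves to a minimal crossing diagram, and any two minimal crossing diagrams of $\alpha$ are related by FR3 moves. When the reduction starts from a minimal genus diagram of $\alpha$---that is, one whose Carter surface has genus $g(\alpha)$---every intermediate diagram still represents $\alpha$, hence has Carter genus at least $g(\alpha)$; combined with the fact that the crossing-decreasing moves cannot raise the Carter genus, the genus stays equal to $g(\alpha)$ throughout, and since FR3 preserves the Carter surface the same holds along the FR3 sequence between minimal crossing diagrams. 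Therefore any two minimal genus diagrams $D,D'$ of $\alpha$ are joined by a finite sequence of flat Reidemeister moves, each taking place among diagrams with Carter surface of genus $g(\alpha)$. Identifying these pairwise homeomorphic surfaces---which is harmless since $k(S)$ and $r(S)$ depend only on the $\Z/2$-homology classes of the state loops and hence are homeomorphism invariants---the first step shows $J$ is unchanged at every move, so $J_D(z)=J_{D'}(z)$.

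\smallskip
The crux is precisely the interplay between the moves and the surface in the second step, and it is where monotonicity is essential. A flat Reidemeister move performed on a diagram that sits non-cellularly on some \emph{larger} surface would leave $J$ invariant on that surface too, but a stabilization or destabilization generally changes $J$; for virtual knots, where monotonicity fails, one can be forced to stabilize when passing between two minimal genus diagrams, and the naive analogue of this proposition is false. What makes the flat case work is that monotonicity provides a path between any two diagrams of $\alpha$ using only flat Reidemeister moves---no stabilization---and that these moves visibly fix the Carter surface, which for diagrams of $\alpha$ is always the minimal genus surface. The only remaining care is in the Euler characteristic bookkeeping of the first step, namely checking that each crossing-reducing move genuinely preserves the Carter genus, not merely that it does not increase it; this is immediate once one observes that the genus cannot fall below $g(\alpha)$.
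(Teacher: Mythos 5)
Your local skein computations are the same as the paper's (the bracket changes sign under FR1, which the $(-1)^{cr(D)}$ factor absorbs; it is unchanged under FR2; FR3 is Proposition~\ref{prop:jkfr3}), but the way you connect two minimal genus diagrams is genuinely different, and it is there that your argument has a real gap. The paper does not use Theorem~\ref{thm:monotonicity} at this point at all: it invokes \cite{manturov}*{Theorem~3.2}, which says that two minimal genus diagrams are related by a \emph{homotopy on the surface}, i.e.\ by FR1, FR3 and genus-preserving FR2 moves performed inside disks on one fixed surface, so the disk-local bracket computations apply verbatim. You instead try to manufacture such a chain from monotonicity plus Carter-genus bookkeeping. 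Monotonicity, however, only hands you a sequence of \emph{combinatorial} Gauss-diagram moves, whereas your first step compares two diagrams that sit on one and the same surface and differ inside a disk. Knowing that every intermediate diagram has Carter genus exactly $g(\alpha)$ does not by itself put consecutive diagrams on a common surface with the move happening in a disk: for that you need, at each step, that the monogon, bigon or triangle of the combinatorial move is actually a face of the Carter surface, i.e.\ that the move is realized by a homotopy there. You assert this (``these moves visibly fix the Carter surface'') but never argue it, and it is precisely the nontrivial content supplied by the citation to \cite{manturov}. With that realizability statement added (it is true, and can be extracted from the correspondence of \cite{carterkamadasaito} or proved by checking that the relevant polygon is always a boundary circle of the ribbon graph), your induction does close up: the move takes place on $\Sigma_i$, the resulting diagram is again cellular there because its genus cannot drop below $g(\alpha)$, and the step-1 computation applies. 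As written, though, you have silently assumed the key geometric input you believed you were deriving.

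A secondary, more localized error is the Euler-characteristic bookkeeping: it is not true that a crossing-decreasing FR2 move ``removes two crossings and two faces.'' For the two-crossing Gauss diagram {\tt O1O2U1U2} (the shadow of the virtual trefoil, a diagram of the flat unknot with Carter genus one, cf.\ the right-hand diagram in Figure~\ref{fig:fk0}), the only available reducing move is a decreasing FR2; it removes two crossings but the number of faces is unchanged, and the Carter genus drops from one to zero. So ``never raises the genus'' needs a different justification (for instance: the new diagram still embeds in the old Carter surface, or a chord-deletion Euler characteristic count showing each deleted chord changes the genus by $0$ or $-1$), not a face count. This particular slip is harmless to your overall scheme, since you only need non-increase together with the lower bound $g(\alpha)$, but it should be repaired; the essential missing step remains the realizability of the combinatorial moves on the fixed minimal surface discussed above.
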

\begin{proof}
    By \cite{manturov}*{Theorem~3.2}, any two minimal genus diagrams are related by homotopy on the surface, namely,  the FR1,3-moves and the FR2-moves that do not change the genus.

We can check that the FR1 move changes the sign of the homological Kauffman bracket:

\begin{align*}
\langle \mbox{\begin{tikzpicture}[scale=0.7,rotate=90]
        \draw[very thick] (100:1cm) to [out=-90,in=90] ([yshift=-7mm]100:1cm) to [out=-90,in=180] (-45:4mm) to [out=0,in=-90] (0:5mm);
        \draw[very thick] (-100:1cm) to [out=90,in=-90] ([yshift=7mm]-100:1cm) to [out=90,in=180] (45:4mm) to [out=0,in=90] (0:5mm);
\end{tikzpicture}} \rangle_\Sigma
&=
\langle \mbox{\begin{tikzpicture}[scale=0.6,rotate=90]
        \draw[very thick] (0,-1cm) to (0,1cm);
        \draw[very thick] (0:8mm) arc (0:360:3mm);
\end{tikzpicture}}\rangle_\Sigma
+
\langle \mbox{\begin{tikzpicture}[yscale=0.24,xscale=0.6]
            \draw[very thick] (-1cm,0) to (-5mm,0) to [out=0,in=-50](-3mm,7mm) to [out=130,in=180] (0mm,13mm);
        \draw[very thick] (1cm,0) to (5mm,0) to [out=180,in=-130] (3mm,7mm) to [out=50,in=0] (0mm,13mm);
\end{tikzpicture}}\rangle_\Sigma
\\
&=
-2
\langle \mbox{
\begin{tikzpicture}[scale=0.7,rotate=90]
        \draw[very thick] (100:1cm) to [out=-90,in=90] ([yshift=-4mm]100:1cm) to [out=-90,in=90] (0:2mm);
        \draw[very thick] (-100:1cm) to [out=90,in=-90] ([yshift=4mm]-100:1cm) to [out=90,in=-90] (0:2mm);
\end{tikzpicture}}\rangle_\Sigma
+
\langle \mbox{
\begin{tikzpicture}[scale=0.7,rotate=90]
        \draw[very thick] (100:1cm) to [out=-90,in=90] ([yshift=-4mm]100:1cm) to [out=-90,in=90] (0:2mm);
        \draw[very thick] (-100:1cm) to [out=90,in=-90] ([yshift=4mm]-100:1cm) to [out=90,in=-90] (0:2mm);
\end{tikzpicture}}\rangle_\Sigma
\\
&=
-
\langle \mbox{
\begin{tikzpicture}[scale=0.7,rotate=90]
        \draw[very thick] (100:1cm) to [out=-90,in=90] ([yshift=-4mm]100:1cm) to [out=-90,in=90] (0:2mm);
        \draw[very thick] (-100:1cm) to [out=90,in=-90] ([yshift=4mm]-100:1cm) to [out=90,in=-90] (0:2mm);
\end{tikzpicture}}\rangle_\Sigma.
\end{align*}

Since the FR1-move also changes the number of crossings by one, we conclude that the flat Jones-Krushkal polynomial does not change under the FR1 move.

For the FR2 move, we can calculate the homological Kauffman bracket as below.

\begin{align*}
\langle \mbox{\begin{tikzpicture}[scale=0.24,rotate=-90]
        \draw[very thick] (120:1cm) to [out=-60,in=90] (0:2mm) to [out=-90, in=60] (240:1cm);
        \draw[very thick] (60:1cm) to [out=-120, in=90] (0:-2mm) to [out=-90, in=120] (-60:1cm);
\end{tikzpicture}} \rangle_\Sigma
&
=
\langle \mbox{\begin{tikzpicture}[scale=0.24,rotate=-90]
        \draw[very thick] (120:1cm) to [out=-60,in=60]    (240:1cm);
        \draw[very thick] (60:1cm) to [out=-120, in=120]  (-60:1cm);
\end{tikzpicture}}\rangle_\Sigma
+
\langle \mbox{\begin{tikzpicture}[scale=0.24,rotate=-90]
        \draw[very thick] (120:1cm) to [out=-60,in=90]    (-3mm,-2mm);
        \draw[very thick] (60:1cm) to [out=-120, in=90] (3mm,2mm) to (3mm,-2mm);
        \draw[very thick] (-3mm,-2mm) to[out=-90, in=-90]  (3mm,-2mm);
        \draw[very thick]  (250:1.2cm) to[out=90, in=90]  (-70:1.2cm);
\end{tikzpicture}}\rangle_\Sigma
+
\langle \mbox{\begin{tikzpicture}[scale=0.24,rotate=90]
        \draw[very thick] (120:1cm) to [out=-60,in=90]    (-3mm,-2mm);
        \draw[very thick] (60:1cm) to [out=-120, in=90] (3mm,2mm) to (3mm,-2mm);
        \draw[very thick] (-3mm,-2mm) to[out=-90, in=-90] (3mm,-2mm);
        \draw[very thick]  (250:1.2cm) to[out=90, in=90] (-70:1.2cm);
\end{tikzpicture}}\rangle_\Sigma
+
\langle \mbox{\begin{tikzpicture}[scale=0.24,rotate=-90]
        \draw[very thick] (-3mm,2mm) to[out=90, in=90]  (3mm,2mm);
        \draw[very thick]  (110:1.2cm) to[out=-90, in=-90]  (70:1.2cm);
        \draw[very thick]  (-3mm,2mm) to (-3mm,-2mm);
        \draw[very thick]  (3mm,2mm) to (3mm,-2mm);
        \draw[very thick] (-3mm,-2mm) to[out=-90, in=-90]  (3mm,-2mm);
        \draw[very thick]  (250:1.2cm) to[out=90, in=90]  (-70:1.2cm);
\end{tikzpicture}}\rangle_\Sigma
\\
&
=
\langle \mbox{\begin{tikzpicture}[scale=0.24,rotate=-90]
        \draw[very thick] (120:1cm) to [out=-60,in=60]    (240:1cm);
        \draw[very thick] (60:1cm) to [out=-120, in=120]  (-60:1cm);
\end{tikzpicture}}\rangle_\Sigma
+
\langle \mbox{\begin{tikzpicture}[scale=0.24,rotate=-90]
        \draw[very thick]  (110:1.2cm) to[out=-90, in=-90]  (70:1.2cm);
        \draw[very thick, yshift=9mm]  (250:1.2cm) to[out=90, in=90]  (-70:1.2cm);
\end{tikzpicture}}\rangle_\Sigma
+
\langle \mbox{\begin{tikzpicture}[scale=0.24,rotate=-90]
        \draw[very thick]  (110:1.2cm) to[out=-90, in=-90]  (70:1.2cm);
        \draw[very thick, yshift=9mm]  (250:1.2cm) to[out=90, in=90]  (-70:1.2cm);
\end{tikzpicture}}\rangle_\Sigma
-2
\langle \mbox{\begin{tikzpicture}[scale=0.24,rotate=-90]
        \draw[very thick]  (110:1.2cm) to[out=-90, in=-90]  (70:1.2cm);
        \draw[very thick, yshift=9mm]  (250:1.2cm) to[out=90, in=90]  (-70:1.2cm);
\end{tikzpicture}}\rangle_\Sigma
\\
&
=\langle \mbox{\begin{tikzpicture}[scale=0.24,rotate=-90]
        \draw[very thick] (120:1cm) to [out=-60,in=60]    (240:1cm);
        \draw[very thick] (60:1cm) to [out=-120, in=120]  (-60:1cm);
\end{tikzpicture}}\rangle_\Sigma.
\end{align*}

Therefore, the flat Jones-Krushkal polynomial is an invariant for minimal genus diagrams of flat knots.
\end{proof}

\begin{definition}
    The \emph{flat Jones-Krushkal polynomial} $J_\alpha(z)$ of flat knot $\alpha$ is defined as  $J_D(z)$ where $D$ is a minimal genus diagram of $\alpha$.
\end{definition}

The minimal genus is essential for $J_D(z)$ to be well-defined. For example, both the Gauss diagrams below represent the unknot.
But the left one has $J_D(z)=1$ and the right one has $J_{D'}(z)=2z+2$. 

\begin{figure}[ht]
   \centering
   \includegraphics[scale=1]{./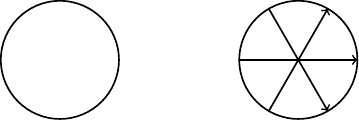}
   \caption{Two Gauss diagrams of the flat unknot}
   \label{fig:fk0}
\end{figure}

\begin{corollary}
    Let $D,D'$ be two minimal crossing Gauss diagrams of the same flat knot $\alpha$.
    Then  $J_D(z)= J_{ D' }(z)= J_\alpha(z)$.
\end{corollary}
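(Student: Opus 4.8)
The plan is to deduce the Corollary from two inputs that are already in hand: the well-definedness of $J_\alpha(z)$ on \emph{minimal genus} diagrams (the proposition proved just above), and the Monotonicity Theorem~\ref{thm:monotonicity}. The bridge between them is the claim that \emph{every minimal crossing Gauss diagram of $\alpha$ is simultaneously a minimal genus diagram}, i.e.\ its Carter surface has genus $g(\alpha)$ (Definition~\ref{def:fkim}). Granting this claim, $D$ and $D'$ are both minimal genus diagrams of the same flat knot, so $J_D(z)=J_{D'}(z)=J_\alpha(z)$ follows at once from the proposition on minimal genus diagrams, and the proof is complete. So the work is entirely in establishing the claim.

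The first step toward the claim is to unpack Monotonicity slightly: any two minimal crossing diagrams of $\alpha$ are related by a sequence of FR3 moves, and an FR3 move is performed inside a disk, changing neither the crossing number nor (building the Carter surface compatibly, since both local pictures are disk-embeddable) the Carter surface. Hence all minimal crossing diagrams of $\alpha$ lie on one common closed surface. Consequently it suffices to exhibit a \emph{single} minimal crossing diagram of $\alpha$ whose Carter surface has genus $g(\alpha)$; by the previous sentence, every minimal crossing diagram will then inherit this.

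To produce such a diagram, I would start from a minimal genus diagram $D_0$ of $\alpha$ and apply Monotonicity to reduce it, via flat Reidemeister moves that monotonically decrease the crossing number, to a minimal crossing diagram $D_1$. The point is that no move in such a sequence increases the genus of the Carter surface: purely virtual moves (VR1--VR3) and the FR4 move leave the Carter surface untouched; an FR3 move leaves it unchanged; a crossing-decreasing FR1 move removes a monogon face, and a crossing-decreasing FR2 move removes a bigon face, and in each case an Euler-characteristic count on the underlying $4$-valent graph (tracking how the flanking faces merge) shows that $V-E+F$ of the Carter surface does not decrease, i.e.\ the genus does not go up. Since $D_0$ already realizes $g(\alpha)$, the genus is forced to stay equal to $g(\alpha)$ along the entire sequence, so $D_1$ is a minimal genus diagram, proving the claim. (As a by-product all FR2 moves used are genus-preserving, which also yields an alternative finish: one could instead track $J$ directly along $D_0\rightsquigarrow D_1$, using invariance under FR3 (Proposition~\ref{prop:jkfr3}), under FR1, and under genus-preserving FR2 from the proposition above, to get $J_{D_1}(z)=J_{D_0}(z)=J_\alpha(z)$.)

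The step I expect to be the main obstacle is precisely this Euler-characteristic bookkeeping for the crossing-decreasing FR1 and FR2 moves: one must verify carefully that deleting a monogon (resp.\ a bigon) from the diagram, together with the resulting merging of adjacent faces of the Carter surface, does not decrease $V-E+F$. For FR1 this is a single local computation; for FR2 it splits into cases according to whether the two faces flanking the bigon are distinct or coincide. The argument is routine but needs this case check to be clean. Everything else is a direct appeal to results already proved: Monotonicity (Theorem~\ref{thm:monotonicity}), FR3-invariance of $J$ (Proposition~\ref{prop:jkfr3}), and the well-definedness of $J_\alpha(z)$ on minimal genus diagrams (cf.\ also the bijection of \cite{carterkamadasaito} between flat knot diagrams and their Carter surfaces).
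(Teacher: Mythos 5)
Your proposal is correct, but it takes a genuinely different route from the paper. The paper's proof of this corollary is essentially a citation: it invokes Il'yutko--Manturov--Nikonov (the reference \cite{manturov}, Corollary~3.1) for the two facts that a minimal crossing Gauss diagram of $\alpha$ realizes the minimal flat genus and that any two minimal crossing diagrams are FR3-related, and then concludes via the FR3-invariance of the bracket (Proposition~\ref{prop:jkfr3}) together with the preceding proposition on minimal genus diagrams. You instead re-derive the key fact ``minimal crossing $\Rightarrow$ minimal genus'' from ingredients internal to the paper: Monotonicity (Theorem~\ref{thm:monotonicity}) applied to a minimal genus diagram $D_0$, plus the observation that none of the moves in a monotone reducing sequence can raise the genus of the Carter surface, so the resulting minimal crossing diagram still has genus $g(\alpha)$, and FR3-relatedness propagates this to all minimal crossing diagrams. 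The Euler-characteristic bookkeeping you flag as the main obstacle does go through: for a decreasing FR1 one has $\Delta V=-1$, $\Delta E=-2$, and exactly the monogon face is lost ($\Delta F=-1$), so $\chi$ is unchanged; for a decreasing FR2 one has $\Delta V=-2$, $\Delta E=-4$, the bigon face (always its own boundary walk) disappears, and the only other effect is a rewiring of the two boundary-walk arcs passing through the corner regions at the two ends of the bigon, which changes the face count by at most one, so $\Delta F\in\{-2,0\}$ (the value $-1$ being excluded since $\chi$ of a closed orientable surface is even) and the genus stays the same or drops by one; FR3 and the virtual moves leave the Carter surface unchanged. One small caution: the relevant case split for FR2 is governed by the two corner regions of the bigon (the regions between the strands to its left and right) and how their walks close up globally, rather than the faces flanking its long sides, so your phrase ``the two faces flanking the bigon'' should be read in that sense. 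What the two approaches buy: the paper's argument is shorter but leans on an external result, while yours is self-contained relative to the stated Monotonicity Theorem, at the cost of the ribbon-graph case check; as you note, it also yields the by-product that every FR2 move in the monotone sequence is genus-preserving, giving the alternative finish by tracking $J$ directly along the sequence.
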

\begin{proof}
    By \cite{manturov}*{Corollary~3.1}, a minimal crossing Gauss diagram of $\alpha$ achieves its minimal flat genus,
    and two minimal crossing diagrams $D,D'$ of the same flat knot are related by a finite sequence of FR3 moves.
    Therefore, their flat Jones-Krushkal polynomials satisfy
    $J_D(z)= J_{ D' }(z).$ 
\end{proof}

In \cite{flatknotinfo}, each flat knot type is represented by a minimal crossing Gauss diagram. One can directly use that Gauss diagram to calculate the flat Jones-Krushkal polynomial.

\subsection{Normalization and enhancement} \label{sec:norm-enh}
In this subsection, we present normalized and enhanced versions of the flat Jones-Krushkal polynomial. The enhancement is a stronger invariant, and it keeps track of the number of homologically nontrivial loops in each state. We also discuss the properties of these invariants.

\begin{definition}
The \emph{normalized flat Jones-Krushkal polynomial} of flat knot $\alpha$
diagram $D$ on a closed surface $\Sigma$
is defined by 
$$\ds \bar J_\alpha(z)=
\frac{J_D(z)}{ \varepsilon_D},
$$ 
where $D$ is a minimal diagram of $\alpha$,
$\varepsilon_D=-2$ if $[\omega_D(S^1)]=0 \in H_1(\Sigma;\Z/2)$, and   $\varepsilon_D=z$ otherwise.
\end{definition}

Then the flat Jones-Krushkal polynomial $\bar J_{3.1}(z)=-3z-5$.

\begin{proposition}
    For any given flat knot $\alpha$, the normalized flat Jones-Krushkal polynomial satisfies $\bar J_{\alpha}(-2)=1$.
\end{proposition}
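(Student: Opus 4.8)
The plan is to reduce everything to the single identity $J_D(-2)=-2$, which in fact holds for \emph{every} flat knot diagram $D$ of $\alpha$ on a closed surface, and then to divide by $\varepsilon_D$ evaluated at $z=-2$.

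First I would dispose of the normalizer. Let $D$ be the minimal diagram of $\alpha$ on a closed surface $\Sigma$ used to define $\bar J_\alpha$. The quantity $\varepsilon_D$ evaluated at $z=-2$ equals $-2$ in both cases of its definition: if $[\omega_D(S^1)]=0$ in $H_1(\Sigma;\Z/2)$ then $\varepsilon_D=-2$ identically, and otherwise $\varepsilon_D=z$, which is $-2$ when $z=-2$. Thus $\bar J_\alpha(-2)=J_D(-2)/(-2)$, and it is enough to show $J_D(-2)=-2$. For this, substitute $z=-2$ into the defining state sum: for each state $S$ the homological Kauffman bracket becomes $\langle D\,|\,S\rangle_\Sigma\big|_{z=-2}=(-2)^{k(S)}(-2)^{r(S)}=(-2)^{k(S)+r(S)}$, and rank--nullity for the linear map $i_*\colon H_1(S;\Z/2)\to H_1(\Sigma;\Z/2)$ gives $k(S)+r(S)=\dim H_1(S;\Z/2)=|S|$, since $S$ is a disjoint union of $|S|$ embedded circles. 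Hence $J_D(-2)=(-1)^{cr(D)}\sum_{S\in\mathfrak S}(-2)^{|S|}$, and the whole proposition comes down to the identity $(-1)^{cr(D)}\sum_{S}(-2)^{|S|}=-2$.

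The cleanest way to establish this identity is to recognize it from the flat arrow polynomial. A flat crossing has exactly two smoothings, and the oriented and disoriented smoothings of Figure~\ref{fig:skein} are precisely these two; consequently the $2^{cr(D)}$ states used to define $A(D)$ are the same as the states used to define $J_D$, with the same number of loops $|S|$ in each (the cusps created by disoriented smoothings are degree-two vertices and do not affect loop counts). Setting every $K_i=1$ in $A(D)=\sum_S(-2)^{|S|-1}\langle S\rangle$ makes each $\langle S\rangle$ equal to $1$, so $A(D)\big|_{K_i=1}=\sum_S(-2)^{|S|-1}$; combining this with $\bar A(\alpha)=(-1)^{cr(D)}A(D)$ and Equation~\eqref{eqn:flatarroweqn} yields $(-1)^{cr(D)}\sum_S(-2)^{|S|-1}=1$, and multiplying by $-2$ gives $(-1)^{cr(D)}\sum_S(-2)^{|S|}=-2$. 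Therefore $J_D(-2)=-2$ and $\bar J_\alpha(-2)=(-2)/(-2)=1$. A self-contained alternative, avoiding the arrow polynomial, is to prove by induction on $cr(D)$ the more general statement that $(-1)^{cr(D)}\sum_S(-2)^{|S|}=(-2)^{c}$ for any flat link diagram with $c$ components: the base case is a disjoint union of $c$ circles, and in the inductive step one resolves a chosen crossing, writes $\sum_S(-2)^{|S|}=\sum_{S_0}(-2)^{|S_0|}+\sum_{S_1}(-2)^{|S_1|}$, and checks the two cases --- a self-crossing, whose two resolutions produce $c$ and $c+1$ components so that $(-2)^{c}+(-2)^{c+1}=-(-2)^{c}$, and a crossing between two distinct components, both of whose resolutions merge them into $c-1$ components so that $2\cdot(-2)^{c-1}=-(-2)^{c}$ --- the sign $(-1)^{cr(D)}$ then restoring $(-2)^{c}$; the case $c=1$ is what we need.

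The only point that is not pure bookkeeping is the identity $(-1)^{cr(D)}\sum_S(-2)^{|S|}=-2$, and I would write that step out in full. In the first route the substance is just the correspondence of state sets with matching loop counts together with the already-proved Equation~\eqref{eqn:flatarroweqn}; in the second route it is the local analysis of how one crossing smoothing changes the component count, which is the flat-diagram shadow of the well-known fact that the Kauffman-bracket loop value $-2$ is exactly the one making the second Reidemeister move trivial. As a sanity check, for the flat trefoil this gives $\bar J_{3.1}(-2)=-3(-2)-5=1$, consistent with $\bar J_{3.1}(z)=-3z-5$.
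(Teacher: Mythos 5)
Your proof is correct and follows essentially the same route as the paper's: evaluate the state sum at $z=-2$, where rank--nullity gives $k(S)+r(S)=|S|$, divide by the normalizer $\varepsilon_D$ (which is $-2$ in both cases of its definition), and reduce to the identity $(-1)^{cr(D)}\sum_S(-2)^{|S|-1}=1$ coming from the arrow/Jones polynomial specialization of Equation~\eqref{eqn:flatarroweqn}. If anything you are more careful than the paper's one-line display, which suppresses the sign $(-1)^{cr(D)}$ and the rank--nullity step, and your optional induction on the number of crossings is a valid self-contained substitute for the appeal to \eqref{eqn:flatarroweqn}.
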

\begin{proof}

    As mentioned
    in the discussion around Equation \eqref{eqn:flatarroweqn},
        assigning $a=1$  sends all Jones polynomials to $\sum_S (-2)^{|S|-1}=1$.
Then we have $$\bar J_{\alpha}(-2)=\sum_S (-2)^{|S|}/(-2)=\sum_S (-2)^{|S|-1}=1.$$
\end{proof}

Empirically, we noticed something curious about the roots of $ \bar J_\alpha(z)- 1$, which is that
    every almost classical flat knot up to 10 crossings satisfies:
    $$ \bar J_\alpha(-1)= 1. $$

\begin{conjecture}
    For any almost classical flat knot $\alpha$, the normalized flat Jones-Krushkal polynomial satisfies $\bar J_\alpha(-1)= 1. $
\end{conjecture}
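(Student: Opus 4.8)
The plan is to reduce the conjecture to a purely combinatorial identity among the states of a minimal crossing Gauss diagram, and then to attack that identity by passing to double covers of the ambient surface. Let $D$ be a minimal crossing Gauss diagram of the almost classical flat knot $\alpha$. By \cite{manturov}*{Corollary~3.1}, $D$ realizes the minimal flat genus $g$, and being a minimal crossing diagram it is itself almost classical, so $[\omega_D(S^1)]=0$ in $H_1(\Sigma_g;\Z)$ and the normalization uses $\varepsilon_D=-2$. Hence the conjecture is equivalent to $J_D(-1)=-2$. Write $n=cr(D)$ and $\epsilon=(-1)^n$. Since $k(S)+r(S)=|S|$ we have $(-2)^{k(S)}(-2)^{r(S)}=(-1)^{|S|}2^{|S|}$ and $(-2)^{k(S)}(-1)^{r(S)}=(-1)^{|S|}2^{k(S)}$, so the already-proved identity $\bar J_\alpha(-2)=1$ says $\epsilon\sum_S(-1)^{|S|}2^{|S|}=-2$, while the conjecture says $\epsilon\sum_S(-1)^{|S|}2^{k(S)}=-2$. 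Thus the conjecture is equivalent to the state-sum identity
\[
\sum_{S}(-1)^{|S|}2^{k(S)}=\sum_{S}(-1)^{|S|}2^{|S|}\;(=-2\epsilon).
\]

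Next I would rewrite $2^{k(S)}=\#\ker\!\bigl(i_*^S\colon H_1(S;\Z/2)\to H_1(\Sigma_g;\Z/2)\bigr)$ using the additive characters of $H_1(\Sigma_g;\Z/2)$, i.e.\ the classes $\chi\in H^1(\Sigma_g;\Z/2)$; a short computation gives $2^{k(S)}=2^{-2g}\,2^{|S|}\,\#\{\chi:\chi\circ i_*^S=0\}$, so that
\[
\sum_{S}(-1)^{|S|}2^{k(S)}
=2^{-2g}\sum_{\chi\in H^1(\Sigma_g;\Z/2)}\ \sum_{S}(-2)^{|S|}\,[\,\chi\circ i_*^S=0\,].
\]
The character $\chi=0$ contributes precisely $2^{-2g}(-2\epsilon)$, and since there are $2^{2g}-1$ nonzero characters, the state-sum identity follows as soon as one establishes, for every nonzero $\chi$, the \emph{per-character identity}
\[
\sum_{S}(-2)^{|S|}\,[\,\chi\circ i_*^S=0\,]=-2\epsilon ,
\]
equivalently, that the states of $D$ having at least one loop $\ell$ with $\chi(\ell)\neq 0$ contribute $0$ to $\sum_S(-2)^{|S|}$. (It is conceivable that only the aggregate over $\chi$ holds rather than each term; that would force a more global argument, which I flag as a risk.)

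To prove the per-character identity I would pass to the connected double cover $p_\chi\colon\widetilde\Sigma_\chi\to\Sigma_g$ associated to $\chi$. Almost classicality is used here crucially: $\chi\bigl([\omega_D(S^1)]\bigr)=0$, so the knot curve lifts, and lifting $D$ produces a flat link diagram $\widetilde D$ on $\widetilde\Sigma_\chi$ carrying a deck involution that interchanges the two crossings lying over each crossing of $D$. Every state $S$ of $D$ lifts to an equivariant state $\widetilde S$ of $\widetilde D$, and $\chi\circ i_*^S=0$ exactly when $\widetilde S$ is ``maximally split'', i.e.\ $|\widetilde S|=2|S|$; a $\chi$-essential loop of $S$ has a connected (rather than split) preimage and lowers $|\widetilde S|$. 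The aim is then to recognize $\sum_{S:\,\chi\circ i_*^S=0}(-2)^{|S|}$ as the equivariant part of a Kauffman-bracket-type state sum of $\widetilde D$ on $\widetilde\Sigma_\chi$ evaluated at the distinguished point, and to close the argument by invoking the already-proved $a=1$ evaluation on the cover (now for a two-component flat link), together with a sign and Euler-characteristic bookkeeping that uses that $\widetilde D$ has $2n$ crossings and that $p_\chi$ doubles Euler characteristics.

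The step I expect to be the main obstacle is the \emph{equivariant restriction}: the state sum on $\widetilde\Sigma_\chi$ runs over all $2^{2n}$ states of $\widetilde D$, not only the equivariant ones, so one must show the non-equivariant states cancel (or are killed by the chosen specialization) before the evaluation on the cover applies. This is a transfer-type assertion that I expect to be delicate, precisely because the homological Kauffman bracket is neither multiplicative nor a homotopy invariant: it is only an invariant of minimal genus diagrams, and $\widetilde\Sigma_\chi$ need not be minimal genus for $\widetilde D$, so $\widetilde D$ cannot be freely simplified. A fallback route avoids covers entirely: use Lemma~\ref{thm:acalt} to present $\alpha$ by an alternating-pattern diagram (equivalently, as the boundary of an immersed Seifert surface in $\Sigma_g$), perform Levine-style surgery to push that surface toward a disk, and track $J_D(-1)$ along the way; but this meets the same difficulty, since the surgeries leave the class of minimal genus diagrams and one would still need an independent proof that $J(-1)$ is unchanged by the intervening stabilizing moves. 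A third, more speculative, possibility is to deduce the per-character identity from the triviality of the flat arrow polynomial on almost classical knots (Lemma~\ref{lem:acarrow}) via a direct comparison of the arrow and Jones-Krushkal state sums.
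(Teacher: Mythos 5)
There is nothing in the paper for your argument to match: this statement is left as a \emph{conjecture}, supported only by machine verification for almost classical flat knots through 10 crossings, so the only question is whether your proposal is a complete proof on its own. The reductions you carry out are correct as far as they go: a minimal crossing diagram $D$ of an almost classical flat knot is almost classical and of minimal genus, so $\varepsilon_D=-2$ and the claim becomes $J_D(-1)=-2$; the identity $k(S)+r(S)=|S|$ turns this into $\sum_S(-1)^{|S|}2^{k(S)}=\sum_S(-1)^{|S|}2^{|S|}$; and the character count $2^{k(S)}=2^{-2g}2^{|S|}\#\{\chi:\chi\circ i_*^S=0\}$ together with the $\chi=0$ contribution $2^{-2g}(-2\epsilon)$ is accurate arithmetic.

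However, the argument stops exactly where the real content begins. The per-character identity $\sum_S(-2)^{|S|}[\chi\circ i_*^S=0]=-2\epsilon$ for every nonzero $\chi$ is not established, and you yourself note it may fail termwise and hold only in aggregate; but the aggregate statement over nonzero characters is, by your own computation, \emph{equivalent} to the conjecture, so absent a proof of the per-character (or aggregate) cancellation you have produced a reformulation rather than a proof. The double-cover strategy intended to supply that cancellation hinges on the ``equivariant restriction'' (transfer) claim that the non-equivariant states of $\widetilde D$ cancel, which you explicitly leave open; in addition, the evaluation you want to invoke on the cover concerns a two-component flat link, whereas the paper's $\bar J_\alpha(-2)=1$ statement is proved for knots, so even that ingredient would need a separate argument. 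The fallback routes (Levine-style surgery on an alternating-pattern diagram, or a comparison with the trivial flat arrow polynomial of almost classical knots) are likewise only sketched and meet the same unresolved step. So this is a genuine gap: the conjecture remains unproven by the proposal, though the character-sum reformulation and the lifting criterion ($\chi\circ i_*^S=0$ iff every loop of $S$ splits in the cover) are sound and could be a reasonable starting point for future work.
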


If true, the condition provides a useful criterion for a flat knot to be almost classical. Among all the flat knots up to 7 crossings, only 6 satisfy the condition $\bar J_\alpha(-1)= 1$ but are not almost classical.

\begin{definition} 
The \emph{enhanced flat Jones-Krushkal polynomial} of a flat knot diagram $D$ realized as immersion on Carter surface of genus $g$ is defined by
$$\ds J^{en}_D(w,z)= (-1)^{cr(D)} \sum_{S\in \mathfrak{S} } \langle D\,|\,S \rangle_\Sigma \ w^{m(S)},$$ where
    $$ m(S) = |S|- \#\text{ of null-homologous curves in }S.$$
    \label{def:jkpoly}
\end{definition}

\begin{proposition}
    If $D,D'$ are two flat knot diagrams on a surface $\Sigma$ related by FR3 moves, then $  J^{en}_D(w,z)= J^{en}_{D'}(w,z)$.
\end{proposition}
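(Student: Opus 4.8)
The plan is to mirror, almost verbatim, the skein computation already carried out in the proof of Proposition~\ref{prop:jkfr3}, carrying along the extra monomial factor $w^{m(S)}$ from Definition~\ref{def:jkpoly}, and to check that this factor is compatible with every reduction used there. First note that an FR3 move preserves the number of flat crossings, so $cr(D)=cr(D')$ and the overall sign $(-1)^{cr(D)}$ is the same for $D$ and $D'$; it therefore suffices to prove $\sum_{S}\langle D\,|\,S\rangle_\Sigma\, w^{m(S)}=\sum_{S'}\langle D'\,|\,S'\rangle_\Sigma\, w^{m(S')}$.

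Next I would localize the argument. An FR3 move takes place inside a disk $\Delta\subset\Sigma$ containing three flat crossings, and $D$, $D'$ agree outside $\Delta$. Write each state of $D$ as a pair $(S_{\mathrm{out}},S_{\mathrm{in}})$, where $S_{\mathrm{out}}$ records the smoothing choices at the crossings outside $\Delta$ and $S_{\mathrm{in}}$ is one of the $2^{3}$ smoothings inside $\Delta$. For fixed $S_{\mathrm{out}}$ the $1$-submanifold of $\Sigma$ lying outside $\Delta$ is fixed, and the part inside $\Delta$ is a crossingless tangle obtained by inserting one of the five non-crossing matchings of the $6$ boundary points of $\Delta$, possibly together with some contractible circles. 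The explicit expansion in the proof of Proposition~\ref{prop:jkfr3} shows precisely that, after applying the rule ``a contractible circle evaluates to $-2$,'' the two sides $T_P$ and $T_{P'}$ of the move expand to the \emph{same} $\Z[z]$-linear combination of these five matchings; equivalently they represent the same element of the flat skein module of the disk with $6$ marked boundary points. Here one uses that any closed loop created entirely inside $\Delta$ is contractible in $\Sigma$, so the local reductions only ever delete null-homologous circles.

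The one new point is that the factor $w^{m(S)}$ survives these reductions. If a state $S$ contains a contractible circle $C$, then $S=S'\sqcup C$ with $[C]=0$ in $H_1(\Sigma;\Z/2)$, so $k(S)=k(S')+1$ and $r(S)=r(S')$, while $|S|=|S'|+1$ and the number of null-homologous components of $S$ also exceeds that of $S'$ by one; hence $m(S)=m(S')$. Therefore $\langle D\,|\,S\rangle_\Sigma\, w^{m(S)}=-2\,\langle D\,|\,S'\rangle_\Sigma\, w^{m(S')}$, which is exactly the reduction rule used in Proposition~\ref{prop:jkfr3}, now with the $w$-exponent untouched. Consequently, for each fixed $S_{\mathrm{out}}$ the inner sum $\sum_{S_{\mathrm{in}}}\langle D\,|\,(S_{\mathrm{out}},S_{\mathrm{in}})\rangle_\Sigma\, w^{m}$ equals a fixed $\Z[w,z]$-combination $\sum_{\mu} c_\mu\,\langle D\,|\,S_{\mathrm{out}}\cup\mu\rangle_\Sigma\, w^{m(S_{\mathrm{out}}\cup\mu)}$ over the five matchings $\mu$, with coefficients $c_\mu$ independent of whether we started from $D$ or $D'$. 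Since gluing a given $S_{\mathrm{out}}$ to a given $\mu$ produces literally the same $1$-submanifold of $\Sigma$ in either case, all of $k$, $r$, and $m$ agree term by term; summing over $S_{\mathrm{out}}$ then yields $J^{en}_D(w,z)=J^{en}_{D'}(w,z)$.

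The main (and essentially only) obstacle is the compatibility observation of the third paragraph: that $m(S)$ is unchanged when a contractible circle is removed, so that the enhancement variable $w$ does not interfere with the cancellations in the local skein identity underlying Proposition~\ref{prop:jkfr3}. Everything else is bookkeeping with the decomposition $S=(S_{\mathrm{out}},S_{\mathrm{in}})$ and a quotation of that earlier computation.
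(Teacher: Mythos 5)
Your proposal is correct and follows essentially the same route as the paper, which simply reduces to a single FR3 move and reruns the skein computation of Proposition~\ref{prop:jkfr3}; your added observation that deleting a contractible circle leaves $m(S)$ unchanged (so the $w$-exponent is untouched by the $-2$ circle evaluations) is exactly the point that makes that computation carry over verbatim.
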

\begin{proof}
    It is enough to verify the claim for two diagrams related by one FR3 move, and the details are similar to the calculation in Proposition~\ref{prop:jkfr3}.
\end{proof}

\begin{definition}
    The \emph{enhanced flat Jones-Krushkal polynomial} of a flat knot $\alpha$ is defined to be  $J^{en}_D(w,z)$, where $D$ is a minimal genus diagram of $\alpha$.
\end{definition}

Consider the flat knot 3.1, for example. One can compute that 
$$m(S)=\begin{cases} $2$ & \text{for $S \in \{000, 100, 011\},$ and} \\
$1$ & \text{for $ S \in \{001, 010, 101, 110, 111\}$.}
\end{cases}$$
Therefore, the enhanced Jones-Krushkal polynomial is given by
$$J^{en}_{3.1}(w,z)=-3w^2z^2-5wz.$$

\begin{proposition}
    Let $\alpha,\beta$ be two flat knots. If $J^{en}_\alpha (w,z)=J^{en}_\beta (w,z)$, then $J_\alpha (z)=J_\beta (z)$.
    \label{thm:enhanced}
\end{proposition}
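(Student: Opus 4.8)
The plan is to observe that the enhanced polynomial is merely a refinement of the ordinary one: it records, in the exponent of the new variable $w$, the number $m(S)$ of homologically nontrivial loops appearing in each state, while leaving the coefficients $\langle D\,|\,S\rangle_\Sigma$ untouched. Consequently, specializing $w=1$ should collapse $J^{en}$ back to $J$, and the implication follows by evaluating the hypothesis at $w=1$.

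Concretely, I would first write down both defining sums for one and the same minimal genus diagram $D$ of $\alpha$:
\[
J^{en}_D(w,z)=(-1)^{cr(D)}\sum_{S\in\mathfrak{S}}\langle D\,|\,S\rangle_\Sigma\, w^{m(S)},\qquad
J_D(z)=(-1)^{cr(D)}\sum_{S\in\mathfrak{S}}\langle D\,|\,S\rangle_\Sigma .
\]
Since $m(S)=|S|-\#\{\text{null-homologous curves in }S\}\ge 0$, each $w^{m(S)}$ is an honest monomial in $w$, so substituting $w=1$ into the first sum yields exactly the second; that is, $J^{en}_D(1,z)=J_D(z)$. Because $J^{en}_\alpha$ and $J_\alpha$ are, by definition, $J^{en}_D$ and $J_D$ for the same minimal genus diagram $D$ (and likewise for $\beta$), we obtain $J^{en}_\alpha(1,z)=J_\alpha(z)$ and $J^{en}_\beta(1,z)=J_\beta(z)$. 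Now the hypothesis $J^{en}_\alpha(w,z)=J^{en}_\beta(w,z)$ is an identity in $\Z[w,z]$, so evaluating both sides at $w=1$ gives $J_\alpha(z)=J_\beta(z)$, as claimed.

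There is essentially no serious obstacle here; the single point meriting (minimal) care is that the enhanced and ordinary invariants are extracted from the same class of diagrams, so that the specialization $w=1$ is compatible with the passage from diagrams to flat-knot invariants — and this compatibility is immediate from the two definitions together with the well-definedness statements proved above.
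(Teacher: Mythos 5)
Your argument is exactly the paper's: the paper also proves this by noting $J^{en}_D(1,z)=J_D(z)$ by definition and evaluating the hypothesized identity at $w=1$. Your write-up just spells out the specialization and the compatibility with minimal genus diagrams in more detail; the approach is the same and correct.
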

\begin{proof}
    By definition, we have $\ds J^{en}_D(1,z)=J_D(z)$. Thus if two knots have the same enhanced flat Jones-Krushkal polynomials, then they must also have the same flat Jones-Krushkal polynomials.
\end{proof}
The converse of Proposition~\ref{thm:enhanced} is not true. For example, the flat knots 3.1 and 5.1 have $J_{3.1} (z)=J_{5.1} (z)=-3z^2-5z$, but their enhanced flat Jones-Krushkal polynomials are not equal:
\begin{align*}
 J^{en}_{3.1} &=-3 w^2 z^2-5wz , \\
 J^{en}_{5.1} &=(-4w^4  + 6 w^3  - 5w^2)z^2 - 5w z .
\end{align*}

\begin{proposition}
    If     $\alpha$ is checkerboard colorable, then $2|J^{en}_{\alpha}$.
    If     $\alpha$ is not checkerboard colorable, then $z|J^{en}_{\alpha}$ and $w|J^{en}_{\alpha}$.
\end{proposition}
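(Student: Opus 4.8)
The plan is to follow the proof of Proposition~\ref{thm:facotrjk} essentially verbatim, while additionally tracking the exponent of the new variable $w$. First I would fix a minimal crossing Gauss diagram $D$ of $\alpha$. By the discussion following Definition~\ref{def:acflat}, such a diagram is checkerboard colorable if and only if $\alpha$ is, and (being minimal crossing) it realizes the minimal flat genus, so that $J^{en}_\alpha(w,z)=J^{en}_D(w,z)$; hence it suffices to establish the divisibilities for $J^{en}_D$. Recall that for every state $S$ we have $\langle D\,|\,S\rangle_\Sigma=(-2)^{k(S)}z^{r(S)}$, so that $J^{en}_D(w,z)=(-1)^{cr(D)}\sum_{S}(-2)^{k(S)}z^{r(S)}w^{m(S)}$, and that the argument in Proposition~\ref{thm:facotrjk} shows the $\Z/2$-homology class of the embedded multicurve underlying any state $S$ equals $[\omega_D(S^1)]\in H_1(\Sigma;\Z/2)$, independently of $S$.

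In the checkerboard colorable case, $[\omega_D(S^1)]=0$, so the loop classes of $S$ satisfy a nontrivial linear relation, forcing $k(S)\geq 1$ for every $S$. Therefore every monomial appearing in $J^{en}_D$ is divisible by $(-2)^{k(S)}$ and hence by $2$, which gives $2\mid J^{en}_\alpha$.

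In the non-checkerboard-colorable case, $[\omega_D(S^1)]\neq 0$, so $r(S)=\dim\im(i_*)\geq 1$ for every $S$, which yields $z\mid J^{en}_D$ exactly as in Proposition~\ref{thm:facotrjk}. For the new divisibility by $w$, I would use that $m(S)$ counts precisely the homologically nontrivial loops of $S$, i.e. $m(S)=|S|-\#\{\text{null-homologous loops of }S\}$ (equivalently, $m(S)\geq r(S)$, since only essential loops contribute to $\im i_*$). Since the sum of the loop classes equals $[\omega_D(S^1)]\neq 0$, at least one loop of $S$ must be homologically essential, so $m(S)\geq 1$ for every $S$; thus each monomial $w^{m(S)}$ is divisible by $w$, giving $w\mid J^{en}_\alpha$.

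There is no genuinely hard step here: the only points requiring care are the well-definedness subtlety (handled by choosing a minimal crossing representative, which is simultaneously minimal genus and inherits $\alpha$'s colorability status) and the elementary homological observation that a nonzero total class forces at least one essential loop in every state, which is exactly what simultaneously delivers $r(S)\geq 1$ and $m(S)\geq 1$. If one prefers, the divisibility by $z$ and $w$ can be packaged together by noting $z^{r(S)}w^{m(S)}$ is divisible by $zw$ once $r(S),m(S)\geq 1$.
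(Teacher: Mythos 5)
Your argument is correct and follows essentially the same route as the paper's proof: the key point in both is that the $\Z/2$-homology class of the loops in any state equals $[\omega_D(S^1)]$, so colorability forces $k(S)\geq 1$ (divisibility by $2$) and non-colorability forces $m(S)\geq r(S)\geq 1$ (divisibility by $z$ and $w$). Your extra care in fixing a minimal crossing representative to justify well-definedness is a harmless refinement of what the paper leaves implicit.
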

\begin{proof}
    Let $\omega_\alpha(S^1)$ be the flat knot diagram on $\Sigma$.
    Then the sum of loops in each state $S$ of $\omega_\alpha(S^1)$ is equal to  $[ \omega_\alpha(S^1) ]\in H_1(\Sigma;\Z/2)$.
    Therefore, if  $\alpha$ is checkerboard colorable, then $k(S)>0$ for each state.
    If  $\alpha$ is not checkerboard colorable, then $m(S)\ge r(S)>0$ for each state.
\end{proof}

\begin{definition}
The \emph{normalized enhanced flat Jones-Krushkal polynomial} of a flat knot diagram $D$ realized as immersion on a Carter surface $\Sigma$ is defined by
$$\ds \bar J^{en}_\alpha(w,z)= 
\frac{J^{en}_D(z)}{ \varepsilon_D},$$ 
where $D$ is a minimal diagram of $\alpha$,
$\varepsilon_D=-2$ if $[\omega_D(S^1)]=0 \in H_1(\Sigma;\Z/2)$, and   $\varepsilon_D=z$ otherwise.
\end{definition}

Proposition~\ref{thm:enhanced} also holds for the normalized versions, and the same pair of flat knots (3.1 and 5.1) show that the converse is not true. Thus the enhanced flat Jones-Krushkal polynomial is a stronger invariant than the flat Jones-Krushkal polynomial.

The following example shows that the flat Jones-Krushkal polynomial is not multiplicative under connected sum. 
The flat knot 4.5 in Figure~\ref{fig:fk6132} is a connected sum of two diagrams of the unknot.
However, the flat Jones-Krushkal polynomial of 4.5 is $-4z+9$, which is nontrivial.

\subsection{Distinguishing flat knots (reprise)}
The flat Jones-Krushkal polynomial and its enhancement are powerful tools for distinguishing flat knots when the minimal flat genus is known.  
For example, the flat knots in Figure~\ref{fig:dup_7crossings_2} have 
$ \bar J_{7.46142}=-7z^2 - 21z - 13$ and $ \bar J_{7.46230}=-13z^2 - 39z - 25$, while other invariants such as the $u$-polynomial, flat arrow polynomial, $2$-strand  cabled flat arrow polynomial and based matrix of the pair are all identical.

\begin{figure}[ht]
  \centering
  \includegraphics[scale=1.1]{./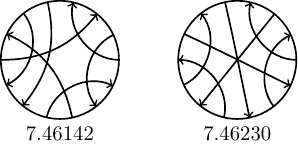}
  \caption[Flat knots not distinguished]{Two flat knots not distinguished by cabled arrow polynomials and based matrices}
   \label{fig:dup_7crossings_2}
\end{figure}

Notice that the two knots in Figure~\ref{fig:dup_7crossings_2} are almost classical.
We know that all the flat arrow polynomials and cabled flat arrow polynomials of almost classical knots are trivial. Additionally, there are almost classical knots with the trivial primitive based matrix. These flat knots are very difficult to separate from the unknot.
Recall from Example \ref{exa:ac8-19} that the flat knot ac8.19 in Figure~\ref{fig:fkac828} has trivial primitive based matrix. This flat knot is actually the first almost classical flat knot with trivial primitive based matrix. Previous invariants were unable to distinguish it from the flat unknot, but one can compute that it has flat Jones-Krushkal polynomial $ \bar J_{\rm{ac8.19}} = 24z^2 + 72z + 49$. Therefore, the flat Jones-Krushkal polynomial is able to distinguish it from the flat unknot.

There are a number of other almost classical flat knots having trivial primitive based matrix. For instance, there are several examples with $10$ crossings, but none with $9$ or fewer crossings. Each such $10$-crossing almost classical flat knot can be shown to have nontrivial flat Jones-Krushkal polynomial by direct computation, so they are all distinguished from the flat unknot using the flat Jones-Krushkal polynomial.

When combined with other flat knot invariants, the enhanced flat Jones-Krushkal polynomial distinguishes flat knots up to 6 crossings, leaving only 5 pairs of 7 crossing knots not separated, see Table~\ref{table:en_arr_phi_dis}. The five pairs of non-distinguished 7-crossing flat knots are depicted in Figure \ref{fig:fivepair}.

Up to 8 crossings, most of the non-distinguished flat knots are
composite. Indeed, restricting attention to \textit{prime} flat knots, up to 8 crossings, there is only one pair of flat knots with $7$ crossings that are not separated by the invariants, namely the two prime flat knots $7.21134$ and $7.32153$ in Figure \ref{fig:fivepair}.

\begin{table}[ht]
    \centering
\begin{tabular}{||c |c | c| c||} 
 \hline
 Crossings &  \# Flat knots  &   $A(\alpha^2)$, $C(\alpha^3)$, $\phi$  &$J^{en}_{\alpha}, A(\alpha^2), C(\alpha^3), \phi$\\ [0.5ex] 
 \hline\hline
3 & 1  &0&0\\
 \hline
4 & 11  &0&0\\
 \hline
5 & 120   &0&0\\
 \hline
6 & 2086  &0&0\\
 \hline
 7 & 46233  &12&10\\
 \hline
8 & 1241291 & 513 & 511 \\
 \hline 
\end{tabular} 
\vspace{2mm}
\caption[Distinguishing flat  knots]{Distinguishing flat  knots using $J^{en}_\alpha, A(\alpha^2), C(\alpha^3),$ and $\phi$-invariants}
\label{table:en_arr_phi_dis}
\end{table}

For checkerboard colorable knots, the invariants $J^{en}_{\alpha}, A(\alpha^2), \phi$ enable us to distinguish all checkerboard colorable flat knots up to 7 crossings, see Table \ref{table:en_arr_phi_dis2}.
There are four checkerboard colorable flat knots with 8 crossings with the same $J^{en}_{\alpha}, A(\alpha^2), \phi$, see Figure~\ref{fig:cc8}. Thus, there is a quadruple of 8-crossing
checkerboard colorable flat knots that are non-distinguished.

\begin{table}[ht]
    \centering
\begin{tabular}{||c |c | c| c||} 
 \hline
 Crossings &  \# Flat knots  &   $ A(\alpha^2),\phi$  &$J^{en}_{\alpha}, A(\alpha^2), \phi$\\ [0.5ex] 
 \hline\hline
4 & 1   &0&0\\
 \hline
5 & 5   &0&0\\
 \hline
6 & 33  &0&0\\
 \hline
7 & 347  &2 &0\\
 \hline
8 & 4451 & 5 & 4 \\
 \hline
\end{tabular}
\vspace{2mm}
\caption[Distinguishing checkerboard colorable flat  knots]{Distinguishing checkerboard colorable flat  knots using $J^{en}_{\alpha}, A(\alpha^2), \phi$}
\label{table:en_arr_phi_dis2}
\end{table}

As mentioned in the last section, the flat arrow polynomial is trivial for every almost classical flat knot. Thus, the only tools for distinguishing almost classical flat knots are the $\phi$-invariant and the enhanced flat Jones-Krushkal polynomial.
Using $J^{en}_\alpha,\phi$, we are able to distinguish almost classical knots up to $8$ crossings, see Table \ref{table:en_arr_phi_dis3}. The first almost classical flat knots that are not distinguished are the three pairs of flat knots with 9 crossings shown in Figure~\ref{fig:dup_8ac}.

\begin{table}[ht]
    \centering
\begin{tabular}{||c |c | c| c||} 
 \hline
 Crossings &  \# Flat knots  &   $\phi$  &$J^{en}_{\alpha}, \phi$\\ [0.5ex] 
 \hline\hline
5 & 1   &0&0\\
 \hline
6 & 1  &0&0\\
 \hline
 7 & 6  &2&0\\
 \hline
8 & 28 & 1 & 0 \\
 \hline
9 & 190 & 26 & 6 \\
 \hline
10 & 1682 & 175 & 39 \\
 \hline
\end{tabular}
\vspace{2mm}
\caption[Distinguishing almost classical flat  knots]{Distinguishing almost classical flat  knots using $J^{en}_\alpha,\phi$}
\label{table:en_arr_phi_dis3}
\end{table}

\begin{figure}[ht]
  \centering
  \includegraphics[scale=1.0]{./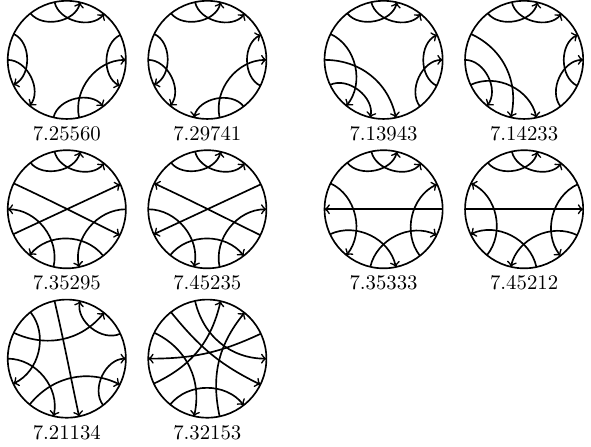}
\caption{Five pairs of flat knots not distinguished}
   \label{fig:fivepair}
\end{figure}

\begin{figure}[ht]
  \centering
  \includegraphics[scale=1.0]{./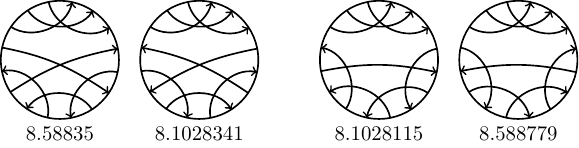}
  \caption{Four checkerboard colorable flat knots not distinguished}
   \label{fig:cc8}
\end{figure}

\begin{figure}[ht]
  \centering
  \includegraphics[scale=1.0]{./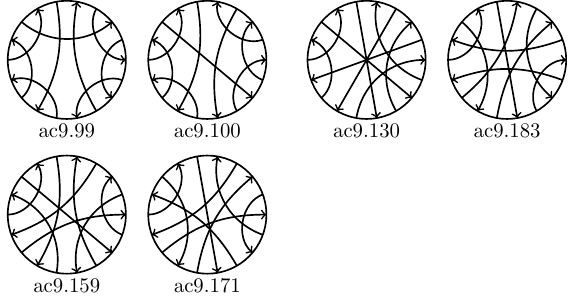}
\caption{Three pairs of almost classical flat knots that are not distinguished}
   \label{fig:dup_8ac}
\end{figure}

We conclude this section with a few open problems.
\begin{problem}
    Are there bigraded or triply graded invariants that categorify the flat Jones-Krushkal polynomial or its enhancement?  
    \label{prob:cate_jk}
\end{problem}

\begin{problem}
    Can one use the flat Jones-Krushkal polynomial or its enhancement to extract slice obstructions for flat knots? 
    \label{prob:slice_jk}
\end{problem}

\begin{problem}
    Which polynomials can be realized as flat Jones-Krushkal  polynomials of flat knots? Which polynomials can be realized as enhanced flat Jones-Krushkal  polynomials of flat knots? 
    \label{prob:rel_jk}
\end{problem}

\begin{problem}
    Referring to \cite{miller22}, does the \emph{homological arrow polynomial} lead to an invariant of flat knots? How powerful is it? 
    \label{prob:arr_jk}
\end{problem}

\begin{problem}
    Find a good algorithm for computing the colored flat Jones-Krushkal  and enhanced  flat Jones-Krushkal polynomials. How powerful are they in terms of classifying flat knots? Specifically, to what extent are the 1,2, and 3 strand cabled versions of the flat Jones-Krushkal polynomials classifying among low-crossing flat knots? 
    
\end{problem}

\section{Conclusion}

As a result, we are able to tabulate the first 1,289,741 flat knots.
We completely distinguish flat knots up to $6$ crossings, and our method works for flat knots with $7$ crossings leaving only five pairs of ambiguities. 
For flat knots with $8$ crossings, the method leaves a total of 511 undistinguished, but most of the ambiguities arise from composite flat knots. Indeed, if we restrict our attention to prime flat knots, the invariants separate all flat knots up to $8$ crossings except for one pair with $7$ crossings, namely
the last pair of flat knots $7.21134, 7.32153$ in Figure~\ref{fig:fivepair}.

\begin{table}[ht]
    \centering
\begin{tabular}{||c |c |c| c||} 
 \hline
 Crossings &  \# Flat knots & \# Checkerboard colorable & \# Almost classical\\ [0.5ex] 
 \hline\hline
3 & 1 & 0 & 0\\
 \hline
4 & 11 & 1 & 0\\
 \hline
5 & 120 & 5 & 1\\
 \hline
6 & 2086 & 33 & 1\\
 \hline
7 & 46233 & 347 & 6\\
 \hline
8 & 1241291 & 4451 & 28\\
 \hline
9 & & 71404 &190\\
 \hline
10 & & 1303643 & 1682\\
 \hline
11 & &  & 18002\\
  \hline
12 & &  & 220849\\
\hline
\end{tabular}
\vspace{2mm}
\caption{The number of tabulated flat knots}
\label{table:number}
\end{table}

We also consider the classification problem for the subclasses of checkerboard colorable and almost classical flat knots.
For instance, we tabulate the first 1,379,884 checkerboard colorable flat knots and the first 240,759 almost classical flat knots.
For checkerboard colorable flat knots, the invariants completely distinguish them up to $7$ crossings and the method works for checkerboard colorable flat knots with 8 crossings leaving only four ambiguities.
For almost classical flat knots, the invariants completely distinguish them up to $8$ crossings, leaving only $3$ pairs of $9$-crossing knots undistinguished.

\begin{figure}[ht]
   \centering
   \includegraphics[scale=1.2]{./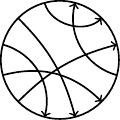}
   \caption{Flat knot 6.540}
   \label{fig:fk6540}
\end{figure}
Furthermore, there are only one flat knot with 
6 crossings and eight flat knots with 7 crossings whose slice status remains unknown. They are displayed in Figure~\ref{fig:fk_7slicesuspect}.
This is the result of filtering out all flat knots that are not algebraically slice, as well as any flat knot that parity projects to a non-slice flat knot. Then we search for saddle moves using the fillings. This approach was successful in slicing many of the remaining flat knots, and Figure~\ref{fig:fk_7slicesuspect}  shows the residual set of eight flat knots where this method was inconclusive. Each of these 7-crossing knots is algebraically slice, but none of them are known to be slice.
\begin{figure}[ht]
   \centering
   \includegraphics[width=0.9\textwidth]{./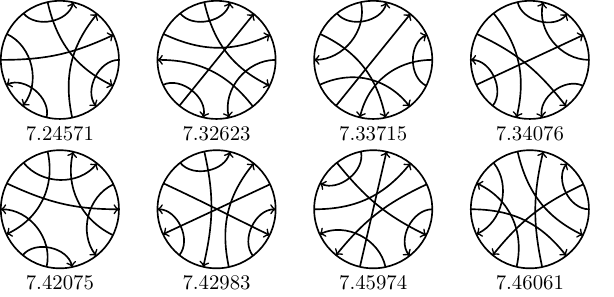}
   \caption{Eight flat knots of unknown slice status}
   \label{fig:fk_7slicesuspect}
\end{figure}

\section*{Acknowledgements}
The main results in this paper are derived from the author's Ph.D. thesis \cite{chen-thesis} written under the supervision of Hans Boden at McMaster University.
\begin{bibdiv}
\begin{biblist}

\bib{boden_acknot}{article}{
     label={BG$^+$17},
     author={Boden, Hans U.},
    author={Gaudreau, Robin},
    author={Harper, Eric},
    author={Nicas, Andrew J.},
    author={White, Lindsay},
   title={Virtual knot groups and almost classical knots},
   journal={Fund. Math.},
   volume={238},
   date={2017},
   number={2},
   pages={101--142},
}

\bib{boden_sign}{article}{
   author={Boden, Hans U.},
   author={Chrisman, Micah},
   author={Gaudreau, Robin},
   title={Signature and concordance of virtual knots},
   journal={Indiana Univ. Math. J.},
   volume={69},
   date={2020},
   number={7},
   pages={2395--2459},
}

\bib{jkpoly}{article}{
   author={Boden, Hans U.},
   author={Karimi, Homayun},
   title={The Jones-Krushkal polynomial and minimal diagrams of surface
   links},
   language={English, with English and French summaries},
   journal={Ann. Inst. Fourier (Grenoble)},
   volume={72},
   date={2022},
   number={4},
  pages={1437--1475},
}

\bib{Burton}{incollection} {
    AUTHOR = {Burton, Benjamin A.},
     TITLE = {The next 350 million knots},
 BOOKTITLE = {36th {I}nternational {S}ymposium on {C}omputational
              {G}eometry},
    SERIES = {LIPIcs. Leibniz Int. Proc. Inform.},
    VOLUME = {164},
     PAGES = {Art. No. 25, 17},
 PUBLISHER = {Schloss Dagstuhl. Leibniz-Zent. Inform., Wadern},
      YEAR = {2020},
      ISBN = {978-3-95977-143-6},
}

\bib{cahn}{article}{
    author={Cahn, Patricia},
   title={A generalization of Turaev's virtual string cobracket and
   self-intersections of virtual strings},
   journal={Commun. Contemp. Math.},
   volume={19},
   date={2017},
   number={4},
   pages={1650053, 37pp},
}

\bib{carter}{article}{
   author={Carter, J. Scott},
   title={Closed curves that never extend to proper maps of disks},
   journal={Proc. Amer. Math. Soc.},
   volume={113},
   date={1991},
   number={3},
   pages={879--888},
}

\bib{MR4213072}{article}{
   author={Cheng, Zhiyun},
   author={Fedoseev, Denis A.},
   author={Gao, Hongzhu},
   author={Manturov, Vassily O.},
   author={Xu, Mengjian},
   title={From chord parity to chord index},
   journal={J. Knot Theory Ramifications},
   volume={29},
   date={2020},
   number={13},
   pages={2043004, 26},
}

\bib{chen-thesis}{thesis}{
  title={Flat Knots and Invariants},
  author={Chen, Jie},
  year={2023},
  note = {Thesis (Ph.D.)-- McMaster University},
  pages= {105},
  url={https://hdl.handle.net/11375/29201},
}

\bib{chu}{article}{
    author={Chu, Karene},
   title={Classification of flat virtual pure tangles},
   journal={J. Knot Theory Ramifications},
   volume={22},
   date={2013},
   number={4},
   pages={1340006, 17pp},
}

\bib{carterkamadasaito}{article}{
    author={Carter, J. Scott},
    author={Kamada, Seiki},
    author={Saito, Masahico},
   title={Stable equivalence of knots on surfaces and virtual knot cobordisms},
   journal={J. Knot Theory Ramifications},
   volume={11},
   date={2002},
   number={3},
   pages={311--322},
}

\bib{Conway}{article}{
    AUTHOR = {Conway, John H.},
     TITLE = {An enumeration of knots and links, and some their algebraic properties},
 BOOKTITLE = {Computational {P}roblems in {A}bstract {A}lgebra ({P}roc. {C}onf., {O}xford, 1967)},
     PAGES = {329--358},
 PUBLISHER = {Pergamon, Oxford-New York-Toronto, Ont.},
      YEAR = {1970},
}

\bib{duval-1983}{article}{
   author={Duval, Jean-Pierre},
   title={Factorizing words over an ordered alphabet},
   journal={J. Algorithms},
   volume={4},
   date={1983},
   number={4},
   pages={363--381},
}
\bib{heatherbook}{book}{
   author={Dye, Heather A.},
   title={An invitation to knot theory},
   note={Virtual and classical},
   publisher={CRC Press, Boca Raton, FL},
   date={2016},
   pages={xxx+256},
}
\bib{virtualarrow}{article}{
   author={Dye, Heather A.},
   author={Kauffman, Louis H.},
   title={Virtual crossing number and the arrow polynomial},
   journal={J. Knot Theory Ramifications},
   volume={18},
   date={2009},
   number={10},
   pages={1335--1357},
}

\bib{category}{article}{
    author={Dye, Heather A.},
    author={Kauffman, Louis H.},
    author={Manturov, Vassily O.},
   title={On two categorifications of the arrow polynomial for virtual
   knots},
   conference={
      title={The mathematics of knots},
   },
   book={
      series={Contrib. Math. Comput. Sci.},
      volume={1},
      publisher={Springer, Heidelberg},
   },
   date={2011},
   pages={95--124},
}

\bib{flatknotinfo}{webpage}{
    label={FKI},
    author = {Chen, Jie},
    title = {FlatKnotInfo: Table of Flat Knot Invariants},
    url = {https://www.flatknotinfo.com},
}

\bib{david}{article}{
   author={Freund, David},
   title={Complexity of virtual multistrings},
   journal={Commun. Contemp. Math.},
   volume={24},
   date={2022},
   number={6},
   pages={2150066, 10pp},
}

\bib{gibson}{article}{
   author={Gibson, Andrew},
   title={On tabulating virtual strings},
   journal={Acta Math. Vietnam.},
   volume={33},
   date={2008},
   number={3},
   pages={493--518},
}

\bib{GPV00}{article}{
   author={Goussarov, Mikhael}, 
   author={Polyak, Michael}, 
   author={Viro, Oleg},
   title={Finite-type invariants of classical and virtual knots},
   journal={ Topology},
   volume={39},
   date={2000},
   number={5},
   pages={1045--1068},
}

\bib{Green}{webpage}{
   author={Green, Jeremy},  
   title={A table of virtual knots},
   date={2004},
   url={http://www.math.toronto.edu/drorbn/Students/GreenJ},
}

\bib{hassscott}{article}{
    author={Hass, Joel},
    author={Scott, Peter},
   title={Shortening curves on surfaces},
   journal={Topology},
   volume={33},
   date={1994},
   number={1},
   pages={25--43},
}

\bib{HTW98}{article}{
    AUTHOR = {Hoste, Jim},
    author ={Thistlethwaite, Morwen},
    author ={Weeks, Jeff},
     TITLE = {The first 1,701,936 knots},
   JOURNAL = {Math. Intelligencer},
    VOLUME = {20},
      YEAR = {1998},
    NUMBER = {4},
     PAGES = {33--48},
      ISSN = {0343-6993,1866-7414},
}

\bib{manturov}{article}{
   author={Il\cprime yutko, Denis P.},
   author={Manturov, Vassily O.},
   author={Nikonov, Igor M.},
   title={Parity in knot theory and graph links},
   language={Russian, with Russian summary},
   journal={Sovrem. Mat. Fundam. Napravl.},
   volume={41},
   date={2011},
   pages={3--163},
   translation={
      journal={J. Math. Sci. (N.Y.)},
      volume={193},
      date={2013},
      number={6},
      pages={809--965},
   },
}

\bib{jones}{article}{
   author={Jones, Vaughan F. R.},
   title={A polynomial invariant for knots via von Neumann algebras},
   journal={Bull. Amer. Math. Soc. (N.S.)},
   volume={12},
   date={1985},
   number={1},
   pages={103--111},
   issn={0273-0979},
}

\bib{MR2008880}{article}{
    author={Kadokami, Teruhisa},
   title={Detecting non-triviality of virtual links},
   journal={J. Knot Theory Ramifications},
   volume={12},
   date={2003},
   number={6},
   pages={781--803},
}

\bib{kamada02}{article}{
    author={Kamada, Naoko},
   title={On the Jones polynomials of checkerboard colorable virtual links},
   journal={Osaka J. Math.},
   volume={39},
   date={2002},
   number={2},
   pages={325--333},
}

\bib{kauffman99}{article}{
   author={Kauffman, Louis H.},
   title={Virtual knot theory},
   journal={European J. Combin.},
   volume={20},
   date={1999},
   number={7},
   pages={663--690},
}

\bib{kauffman12}{article}{
   author={Kauffman, Louis H.},
   title={Introduction to virtual knot theory},
   journal={J. Knot Theory Ramifications},
   volume={21},
   date={2012},
   number={13},
   pages={1240007, 37},
}

\bib{karimi18}{thesis}{
    author={Karimi, Homayun},
   title={Alternating virtual knots},
   subtitle={PhD thesis, McMaster University},
   date={2018},
}

\bib{KM11}{article}{
   author={Kronheimer, Peter B.},
   author={Mrowka, Tomasz S.},
   title={Khovanov homology is an unknot-detector},
   journal={Publ. Math. Inst. Hautes \'Etudes Sci.},
   volume={113},
   date={2011},
   pages={97--208},
}

\bib{knotinfo}{webpage}{
    label={KI},
author = {Livingston, Charles},
author = {Moore, Allison H.},
    title = {KnotInfo: Table of Knot Invariants},
    url = {https://www.knotinfo.math.indiana.edu},
}

\bib{krushkal}{article}{
   author={Krushkal, Vyacheslav},
   title={Graphs, links, and duality on surfaces},
   journal={Combin. Probab. Comput.},
   volume={20},
   date={2011},
   number={2},
   pages={267--287},
   issn={0963-5483},
}

\bib{levine69}{article}{
   author={Levine, Jerome},
   title={Knot cobordism groups in codimension two},
  journal={Comment. Math. Helv.},
   volume={44},
   date={1969},
   pages={229--244},
}

\bib{manturov-parity}{article}{
   author={Manturov, Vassily O.},
   title={Parity in knot theory},
   language={Russian, with Russian summary},
   journal={Mat. Sb.},
   volume={201},
   date={2010},
   number={5},
   pages={65--110},
   issn={0368-8666},
   translation={
      journal={Sb. Math.},
      volume={201},
      date={2010},
      number={5-6},
      pages={693--733},
      issn={1064-5616},
   },
}

\bib{manturov-free}{article}{
    AUTHOR = {Manturov, Vassily O.},
     TITLE = {Parity and cobordisms of free knots},
   JOURNAL = {Mat. Sb.},
    VOLUME = {203},
      YEAR = {2012},
    NUMBER = {2},
     PAGES = {45--76},
      ISSN = {0368-8666,2305-2783},
}

\bib{manturovbook}{book}{
    author={Manturov, Vassily O.},
    author={Ilyutko, Denis P.},
   title={Virtual knots. The state of the art},
   series={Series on Knots and Everything},
   volume={51},
   note={Translated from the 2010 Russian original;
   With a preface by Louis H. Kauffman},
   publisher={World Scientific Publishing Co. Pte. Ltd., Hackensack, NJ},
   date={2013},
   pages={xxvi+521},
}

\bib{miller22}{article}{
  title={The homological arrow polynomial for virtual links},
  author={Miller, Kyle A.},
  journal={J. Knot Theory Ramifications},
   volume={32},
   date={2023},
   number={1},
   pages={2350005, 42pp},
}

\bib{miyazawa}{article}{
    AUTHOR = {Miyazawa, Yasuyuki},
     TITLE = {Magnetic graphs and an invariant for virtual links},
   JOURNAL = {J. Knot Theory Ramifications},
    VOLUME = {15},
      YEAR = {2006},
    NUMBER = {10},
     PAGES = {1319--1334},
      ISSN = {0218-2165,1793-6527},
       DOI = {10.1142/S0218216506005135},
       URL = {https://doi.org/10.1142/S0218216506005135},
}

\bib{polyak}{article}{
   author={Polyak, Michael},
   title={Minimal generating sets of Reidemeister moves},
   journal={Quantum Topol.},
   volume={1},
   date={2010},
   number={4},
   pages={399--411},
}

\bib{Rolfsen}{book}{
    AUTHOR = {Rolfsen, Dale},
     TITLE = {Knots and links},
    SERIES = {Mathematics Lecture Series},
    VOLUME = {No. 7},
 PUBLISHER = {Publish or Perish, Inc., Berkeley, CA},
      YEAR = {1976},
     PAGES = {ix+439},
}

\bib{MR2207902}{article}{
   author={Silver, Daniel S.},
   author={Williams, Susan G.},
   title={An invariant for open virtual strings},
   journal={J. Knot Theory Ramifications},
   volume={15},
   date={2006},
   number={2},
   pages={143--152},
}

\bib{MR2275098}{article}{
   author={Silver, Daniel S.},
   author={Williams, Susan G.},
     TITLE = {Crowell's derived group and twisted polynomials},
   JOURNAL = {J. Knot Theory Ramifications},
    VOLUME = {15},
      YEAR = {2006},
    NUMBER = {8},
     PAGES = {1079--1094},      ISSN = {0218-2165,1793-6527},
       DOI = {10.1142/S0218216506004956},
       URL = {https://doi.org/10.1142/S0218216506004956},
}

\bib{turaev04}{article}{
    author={Turaev, Vladimir},
   title={Virtual strings},
   journal={Ann. Inst. Fourier (Grenoble)},
   volume={54},
   date={2004},
   number={7},
   pages={2455--2525},
}

\bib{turaev06}{article}{
   author={Turaev, Vladimir},
   title={Knots and words},
   journal={Int. Math. Res. Not.},
   date={2006},
   pages={Art. ID 84098, 23pp},
}

\bib{turaev08}{article}{
    author={Turaev, Vladimir},
   title={Cobordisms of words},
   journal={Commun. Contemp. Math.},
   volume={10},
   date={2008},
   number={suppl. 1},
   pages={927--972},
}

\bib{turaev-vk-cob}{article}{
   author={Turaev, Vladimir},
   title={Cobordism of knots on surfaces},
   journal={J. Topol.},
   volume={1},
   date={2008},
   number={2},
   pages={285--305},
}

\end{biblist}

\end{bibdiv}

\Addresses
\end{document}